\documentclass[10pt,reqno]{amsart}
\usepackage{amsmath, latexsym, amsfonts, amssymb,amsthm, amscd,epsfig,enumerate}
\usepackage{subfigure,color, float, caption}
\usepackage{tikz,colortbl, bm, hyperref, mathpazo}
\usetikzlibrary{calc}
\usetikzlibrary{decorations.pathreplacing}

\advance\hoffset -.75cm

\definecolor{refkey}{gray}{.75}
\definecolor{labelkey}{gray}{.75}

\newcommand{\R}{\mathbb R}

\newcommand{\N}{\mathbb N}

\newcommand{\diff}{\mathrm{d}}

\newcommand{\boldmu}{{\bm{\mu}}}
\newcommand{\boldnu}{{\bm{\nu}}}
\newcommand{\Mfrak}{{\mathfrak{M}}}
\newcommand{\Nfrak}{{\mathfrak{N}}}

\newcommand{\pr}{\mathbb P}

\newcommand{\ident}{{\mathchoice {\rm 1\mskip-4mu l} {\rm 1\mskip-4mu l}
{\rm 1\mskip-4.5mu l} {\rm 1\mskip-5mu l}}}

\newcommand{\gegerm}{{\,\ge_{\textrm{germ}\,}}}

\newcommand{\gepgf}{{\,\ge_{\textrm{pgf}\,}}}

\newtheorem{Theorem}{Theorem}[section]

\newtheorem{Corollary}[Theorem]{Corollary}
\newtheorem{Remark}[Theorem]{Remark}
\newtheorem{Proposition}[Theorem]{Proposition}
\newtheorem{Definition}[Theorem]{Definition}
\newtheorem{Example}[Theorem]{Example}

\newtheorem{Assumption}[Theorem]{Assumption}

\title%[Recent results on branching random walks]
{Critical parameters of germ-monotone families of branching random walks}
\author[D.~Bertacchi]{Daniela Bertacchi}
\address{D.~Bertacchi, Dipartimento di Matematica e Applicazioni,
Universit\`a di Milano--Bicocca,
via Cozzi 53, 20125 Milano, Italy.}
\email{daniela.bertacchi\@@unimib.it}

\author[F.~Zucca]{Fabio Zucca}
\address{F.~Zucca, Dipartimento di Matematica,
Politecnico di Milano,
Piazza Leonardo da Vinci 32, 20133 Milano, Italy.}
\email{fabio.zucca\@@polimi.it}

\date{}

\begin{document}
%\nocite{*}

\begin{abstract}
We introduce a broad class of families of branching random walks on a countable set $X$, which we refer to as germ-monotone branching random walks (GMBRWs). The processes in each family are parametrized by a positive parameter $\lambda>0$, which controls the overall reproductive speed, and they are monotonically increasing in $\lambda$ with respect to the germ order, a notion that extends classical stochastic domination. This framework encompasses a wide range of models, including classical continuous-time branching random walks, as well as discrete-time counterparts of certain non-Markovian processes such as ageing branching random walks.
\hfill\break\noindent
We define a general notion of critical parameter $\lambda(A)$ associated with each subset $A \subseteq X$, which serves as a threshold separating almost sure extinction in $A$ from positive probability of survival in $A$. This unifies and extends the classical global and local critical parameters $\lambda_w$ and $\lambda_s$, which can be recovered as special cases.
We then investigate how modifications of the reproduction laws, either on a finite set or on a more general subset of $X$, affect these critical parameters. Our results extend earlier contributions in the literature.
\end{abstract}

\maketitle
\noindent {\bf Keywords}: branching random walk, branching process, germ order, critical parameters, local survival, global survival, pure global survival phase.

\noindent {\bf AMS subject classification}: 60J05, 60J80.

\section{Introduction}

A branching process, also known as the Galton–Watson process (see~\cite{cf:GW1875}),
is a stochastic process in which a particle dies and produces a random 
number of offspring according to a prescribed offspring distribution 
$\rho$, where $\rho(n)$ denotes the probability that exactly $n$ 
children are born. Different particles reproduce independently following
the same law. The process may either become extinct (that is, no 
particles remain alive after some finite time) almost surely, or survive
indefinitely with positive probability. The extinction probability can 
be explicitly characterized in terms of the offspring distribution 
$\rho$.

A branching random walk
(BRW, hereafter) generalizes this model by associating each particle 
with a location $x \in X$, where $X$ is an at most countable set. Although 
$X$ is often interpreted as a spatial domain, it can also represent a 
set of types (see, for example,~\cite{cf:KurtzLyons}). Particles located
at a site $x \in X$ are replaced by a random number of offspring, which
are then distributed among the sites in $X$. The reproduction law depends 
on the location of the parent particle, and all particles reproduce 
independently. 
Branching processes can be seen as the special case of BRWs
where the space set $X$ is a singleton.

This class of processes has been extensively studied; see, for instance,~\cite{cf:AthNey, cf:Big1977, cf:Harris63}
for early contributions. Beyond the classical setting of 
$\mathbb{Z}^d$, branching random walks have been investigated on finite 
sets~\cite{cf:MountSchi}, on trees~\cite{cf:Attia, cf:Ligg1, cf:MadrasSchi, cf:Muller1, cf:PemStac1, cf:Su},
on general graphs~\cite{cf:Cand1, cf:Cand2, cf:Duss, cf:KW, cf:KSava}  and in random environment~\cite{cf:MP00, cf:MP03}.
%  In the case of a BRW, there is no upper bound on the number of 
% particles that may occupy a single site. Imposing such an upper bound, say at 
% most $m$ particles per site, leads to an $m$-type contact process. The 
% branching random walk can be recovered as the limit of these processes 
% as $m \to \infty$~\cite{cf:PemStac1, cf:Z1}.

Branching random walks may be 
defined either in discrete time, where each particle lives for exactly 
one generation and all offspring appear in the next generation, or in 
continuous time, where generations overlap because particles 
reproduce at different times during their lifetimes. In continuous time,
one often considers families of parametrized processes in which the 
reproduction rates between locations are fixed, and a parameter $\lambda> 0$ 
controls the overall reproductive speed (larger values of 
$\lambda$ correspond to shorter intervals between successive 
reproduction events;
see Section~\ref{sec:basic} for details).
%FRASE SUCCESSIVA ANTICIPATA\\
These families are monotonically increasing with respect to the stochastic domination, that is, if $\lambda>\lambda^*$, then the $\lambda$-process stochastically dominates the $\lambda^*$-process.

This monotonicity  implies that processes with larger values of $\lambda$ are more likely to survive, and naturally leads to the definition
of \textit{critical parameters}. For a branching process, there is only one critical parameter $\lambda_c$, which is the threshold between
almost sure extinction and survival with positive probability.
%FRASE SUCCESSIVA ANTICIPATA\\
Since a BRW evolves on an 
underlying spatial structure, its behavior is typically more complex.
In particular, one may  
distinguish between global survival (on the entire space $X$) and  local
survival (in a given site).
 The threshold separating global 
extinction from global survival is known as the global critical parameter $\lambda_w$, while the threshold separating local extinction from local survival is called the local critical parameter
$\lambda_s$. The subscripts $w$ and $s$ stand for “weak” and “strong”, 
which are often used as synonyms for global and local survival, 
respectively. However, throughout this paper, the term "strong" is reserved for a distinct notion of survival (see Definition~\ref{def:survival}). Several authors have addressed the identification of these
critical parameters~\cite{cf:PemStac1, cf:Stacey} and established criteria for survival and extinction~\cite{cf:Gantert1, cf:Gantert2, cf:MachadoMenshikovPopov, cf:Muller1, cf:Muller2, cf:Su}.

In this paper, we generalize the concept of critical parameter in two directions. 
First, we observe that, given a family  of processes, parametrized by $\lambda$, in order to define values of $\lambda$ that
act as a threshold between extinction and survival, we do not need stochastic monotonicity in $\lambda$.
What is needed is that, if $\lambda<\lambda^*$,
 whenever there is a positive probability of survival for a $\lambda$-process,
then the same holds for a $\lambda^*$-process. This monotonicity is granted
if the $\lambda$-process is smaller than the $\lambda^*$-process,  according to the \textit{germ-order}.
This order is based on the behavior of the generating function of the process 
(see Definition~\ref{def:ordering}). It has been introduced in \cite{cf:Hut2022} and further extended in \cite{cf:BZgerm}.
We are thus able to define the critical parameters for parametrized families which we name
\emph{germ-monotone BRWs} (briefly \emph{GMBRWs},  see Definition~\ref{def:onepardependentfamily}). 
These families contain, among others, the \emph{discrete-time counterpart} of continuous time BRWs and of BRWs with ageing (see Section~\ref{subsec:continuous}). 
Secondly, we extend the notion of critical parameter to any given subset $A\subseteq X$.
We define the value  $\lambda(A)$  as the threshold separating almost sure extinction in $A$ and positive probability of survival in $A$ (see Definition~\ref{def:survival}). The classical critical parameters are particular cases of this definition:
 $\lambda_w=\lambda(X)$ and $\lambda_s=\lambda(A)$ where $A$ is a finite, nonempty set.

Since critical parameters are the thresholds between extinction and survival, it is not only relevant to 
identify their values, but also to understand how these parameters may change when the structure of the process
is somehow modified.
Indeed, in ecological or epidemiological 
applications, we may view the parameter $\lambda$ as a reproduction/infection speed which is an intrinsic characteristic
of the population that we are modelling. If we aim at pest or disease control, we can either try and lower this speed
(which implies a global change of the process), or perform modifications of the process in some regions (so that
the critical parameters increase their values).
For example, in order to  eradicate an epidemic,  one possible strategy is to reduce 
$\lambda$ by wearing face masks or locking down some social activities. 
An alternative strategy is to modify the reproduction laws 
%$\boldmu_\lambda$ 
 on a subset
 $\Delta$.  Such a modification may increase the critical parameters and consequently change the behavior of the process for a fixed value of $\lambda$ ($\lambda$ might be supercritical before the changes and subcritical after).

The effect of \emph{local modifications} (namely, modifications confined to a finite set) on $\lambda_s$ and $\lambda_w$ for continuous-time BRWs has been studied in \cite{cf:BZ2025}.
In this paper, we consider the 
extended critical parameters of GMBRWs and study their behavior 
when the reproduction laws are modified on a generic, not necessarily finite, subset $\Delta$ (see Section~\ref{sec:survivalprob}).

To summarize, the present paper generalizes the framework of \cite{cf:BZ2025} in three distinct directions.\begin{enumerate} 
	\item We work within the broader class of GMBRWs, whereas \cite{cf:BZ2025} was restricted to discrete-time counterparts of continuous-time BRWs. The ageing BRWs introduced in Section~\ref{subsec:continuous} provide a natural motivation for this extension.
	\item Rather than focusing solely on the classical critical parameters $\lambda_w$ and $\lambda_s$, we develop a theory for the general critical parameters $\lambda(A)$. The existence of a nontrivial $\lambda(A)$ distinct from both $\lambda_w$ and $\lambda_s$, is established in Section~\ref{sec:examples}.
	\item We consider a strictly wider class of modifications, of which the finite modifications studied in \cite{cf:BZ2025} are a special case. 
	\end{enumerate}

The paper is organized as follows. Section~\ref{sec:basic} presents the basic definitions (discrete-time BRWs, extinction probabiities, classical continuous-time BRWs and their critical parameters). We recall 
Theorem~\ref{cor:equivalence},
which links survival/extinction in different sets with the respective extinction probabilities.
Section~\ref{sec:GMBRW} recalls the definition of germ order for discrete-time BRWs and uses it to define the class of GMBRWs and their critical parameters, associated to subsets of the underlying graph.
A characterization of the local critical parameter and a lower bound for general critical parameters is given in 
Proposition \ref{pro:criticalA} and is applied to the case of continuous-time BRWs in Corollary \ref{cor:criticalA}.

Section~\ref{sec:survivalprob} is devoted to the discussion of what happens to the critical parameters, when we modify
the process. The results rely on Theorem~\ref{th:modifiedBRW} and Theorem~\ref{th:modifiedBRW-CT}.
Earlier versions of these Theorems appeared in \cite[Theorem 4.2]{cf:BZ17} and \cite[Theorem 2.4]{cf:BZ2020}, and are further generalized in the present work. Proposition~\ref{pro:pureweak-nonstrong} analyzes the behavior of $\min(\lambda(A), \lambda(B))$ before and after a modification of the process. In particular, they show that 
if  $\lambda(A) < \lambda(B)$, then $\lambda(A)$ cannot increase as a consequence of the modification. 
Theorem \ref{th:mainmod} and Corollary \ref{cor:new1} show that if before the modification $\lambda(A) < \lambda(B)$, then only some mutual positions of the critical parameters are possible (see Figure \ref{fig:possible-lambda}).
Subsection \ref{subsec:max} is dedicated to the application of these results to the particular case of the local and global critical parameters and to the question whether it is possible to create a pure global survival phase in a GMBRW, by applying finite modifications to the process.

In Section~\ref{sec:examples} we show  that  the critical value $\lambda(A)$ can indeed differ from the classic critical values $\lambda_w$ and $\lambda_s$ (Example \ref{ex:T5T6}) and that there are families of BRWs which are ordered with respect to germ order, hence they are GMBRWs, but they are not pgf-ordered nor stochastically ordered
 (Example \ref{ex:nonstochmon}). Example \ref{ex:T5T6loop} shows how a local modification of Example \ref{ex:T5T6}
 (i.e. the addition of a local reproduction in one site) affects the values of the critical parameters, depending on the
 strength of this local reproduction.

%\textcolor{red}{OUTLINE SISTEMARE ALLA FINE}
%
%The last result of the section, Proposition~\ref{pro:max}, deals with $\max(\lambda(A), \lambda(B))$. These results are extremely powerful in the case of local modifications.
%%Proposition~\ref{pro:pureweak-nonstrong} and Corollary~\ref{cor:pureweak-nonstrong1.6} analyze the behavior of $\min(\lambda(A), \lambda(B))$, before and after a modification. In particular, they establish that either $\lambda(A) \ge \lambda(B)$, or else $\lambda(A)$ cannot increase as a result of the modification. The final result of the section, Proposition~\ref{pro:max}, concerns the behavior of $\max(\lambda(A), \lambda(B))$. These results are especially powerful in the context of local modifications. Section~\ref{sec:max} is devoted to this important special case. 
%One of the main consequences is that either $\lambda(A)=\lambda_s$ or $\lambda(A)$ cannot increase as a consequence of local modifications; this is a rather surprising result since the existence of a strictly larger critical value, $\lambda(B) > \lambda(A)$, implies that no local modification can increase the critical value related to $A$.  Section~\ref{subsec:max} illustrates how the results of \cite{cf:BZ2025} can be recovered and further strengthened. Finally, an example is presented in Section~\ref{sec:examples} which shows that the critical value $\lambda(A)$ can indeed differ from the classic critical values $\lambda_w$ and $\lambda_s$; this example shows how $\lambda_s$, $\lambda(A)$ and $\lambda_w$ vary under local modifications.

%%%%%%%%%%%%%%%%%%%%%%%%%%%%%%%%%%%%%%%%%%

\section{Basic definitions and preliminaries}
\label{sec:basic}

\subsection{Discrete-time Branching Random Walks.}
\label{subsec:q1}

Given an at most countable set $X$, we define a discrete-time BRW
as a process $\{\eta_n\}_{n \in \N}$,
where $\eta_n(x)$ is the number of particles alive at $x \in X$ at time $n$. 
%The dynamics is described as follows: 
Let $S_X:=\{f:X \to \N\colon \sum_yf(y)<\infty\}$ and let 
$\boldmu=\{\mu_x\}_{x \in X}$ be a family of probability measures
on the (countable) measurable space $(S_X,2^{S_X})$. 
A particle of generation $n$ at site $x\in X$ lives one unit of time;
after that, a function $f \in S_X$ is chosen at random according to the law $\mu_x$.
%This function describes the number of children and their positions, that is,
The original particle is replaced by $f(y)$ particles at
$y$, for all $y \in X$. The choice of $f$ is independent for all breeding particles.
The BRW is denoted by $(X,\boldmu)$.
If $X$ is a singleton, $(X,\boldmu)$ is the classical branching process, with reproduction law $\boldmu$.

In order to keep track of the sites where a particle living at $x$ may place its offspring, with positive
probability, we
define the \emph{first moment matrix}
$M=(m_{xy})_{x,y \in X}$ of the process, by
$m_{xy}:=\sum_{f\in S_X} f(y)\mu_x(f)$.
To  any
discrete-time BRW we associate a graph $(X,E_\boldmu)$, where $(x,y) \in E_\boldmu$  
if and only if $m_{xy}>0$.
We say that there is a path of length $n \in \mathbb{N}$ from $x$ to $y$, %and we write $x \stackrel{n}{\to} y$, 
if it is
possible to find a sequence $\{x_i\}_{i=0}^n$ 
such that $x_0=x$, $x_n=y$ and $(x_i,x_{i+1}) \in E_\boldmu$
for all $i=0, \ldots, n-1$ (observe that there is always a path of length $0$ from $x$ to itself). If there is a path 
from $x$ to $y$,
then we write $x \to y$; whenever $x \to y$ and $y \to x$ we write $x \rightleftharpoons y$.
The equivalence relation $\rightleftharpoons$ induces a partition of $X$: the
class $[x]$ of $x$ is called \emph{irreducible class of $x$}.
If the graph $(X,E_\boldmu)$ is \emph{connected} (that is, there is only one irreducible class),
then we say that the BRW is \emph{irreducible}.
Irreducibility implies that the progeny of any particle can spread to any site of the graph with positive probability. 

In order to avoid trivial situations where particles have one offspring almost surely, we assume
henceforth the following.
For all $x \in X$ there is a vertex $y \rightleftharpoons x$ such that
$\mu_y(f\colon  \sum_{w\colon w \rightleftharpoons y} f(w)=1)<1$,
that is, in every equivalence class (with respect to $\rightleftharpoons$)
there is at least one vertex where a particle has a positive probability of producing 
a number of children different from 1.

  \begin{Definition}\label{def:surv-event}
  The event $\mathcal{S}(A):=\{\limsup_{n \to \infty} \sum_{y \in A} \eta_n(y)>0\}$ is called \emph{survival in $A$}, while $\mathcal{E}(A):=\mathcal{S}(A)^\complement$ is called \emph{extinction in $A$}.
  \end{Definition}
  
  We consider initial configurations with only one particle placed at a fixed site $x$ and we
denote by $\pr^{x}$ 
the law of the corresponding process. Evolution of processes with more than one initial particle
is obtained by superimposition.

\begin{Definition}\label{def:extprob}
	Given a BRW $(X,\boldmu)$, $x\in X$ and $A\subseteq X$, the probability of extinction in $A$, starting with one particle at $x$, is defined as
	\[
	{\mathbf{q}}(x,A)
	:=\pr^{x}(\mathcal{E}(A)).
	\]
	We denote by ${\mathbf{q}}(A)$ the extinction probability vector, whose $x$-entry is ${\mathbf{q}}(x,A)$.
\end{Definition}
If $A=\{y\}$, we write ${\mathbf{q}}(x,y)$ instead of ${\mathbf{q}}(x, \{y\})$.
Note that
${\mathbf{q}}(x,A)$ depends on $\boldmu$. When we need to stress this dependence, we write 
$ {\mathbf{q}}^\boldmu(x,A)$.
Extinction probabilities have been the object of intense study during the last decades.
We refer the reader, for instance, to \cite{cf:BBHZ, cf:BZ4, cf:Haupt, cf:Z1}.

  Based on the extinction probabilities, we classify the behavior of the BRW, see Table \ref{tb:table1bis} for a summary.
\begin{Definition}\label{def:survival} 
	%$\ $
	Let $(X,\boldmu)$ be a BRW starting from one particle at $x\in X$ and let $A\subseteq X$.
	We say that
	\begin{enumerate}
		\item 
		the process \textsl{survives} in $A$,
		 if 		$
		{\mathbf{q}}(x,A)<1$
		 and it  \textsl{goes extinct}  in $A$, if $
		{\mathbf{q}}(x,A)=1$;
		\item
		the process \textsl{survives globally}, if ${\mathbf{q}}(x,X) <1$  \textcolor{blue}and it  \textsl{goes globally extinct}, if $
		{\mathbf{q}}(x,A)=1$;
		\item
		there is \textsl{strong survival} in $A$,	if
		$ 
		{\mathbf{q}}(x,A)={\mathbf{q}}(x,X)<1$
		%\bar {\mathbf{q}}(x)<1
		and \textsl{nonstrong survival} in $A$ if ${\mathbf{q}}(x,X)<{\mathbf{q}}(x,A)<1$;
		\item
		the process is in a \textsl{pure global survival phase} if
		%starting from $x$ if
		$
		%\bar {\mathbf{q}}(x)
		{\mathbf{q}}(x,X) <{\mathbf{q}}(x,x)=1
		$.
	\end{enumerate}
\end{Definition}

 We note that, while survival and extinction are tail events, with a slight abuse of notation, we say that the process survives,
 or that there is survival, when the survival event has positive probability (\emph{with positive probability} is omitted). 
 Similarly, we say that the process goes extinct, or that there is extinction, when the extinction event is almost sure
 (\emph{almost surely} is tacitly understood).
 Moreover, in contrast to global survival, we say that there is \emph{local} survival when ${\mathbf{q}}(x,x)<1$ (i.e. the process returns to the starting vertex infinitely many times, with positive probability).
 By a Borel-Cantelli argument, it is easy to prove that, in irreducible BRWs, ${\mathbf{q}}(x,x)={\mathbf{q}}(x,A)$ for all finite nonempty subsets $A$. Therefore, we refer to \emph{local survival} (without mentioning the set), whenever ${\mathbf{q}}(x,A)<1$ for every finite nonempty set $A$.
 \\
 We note that in general, Definition~\ref{def:survival} depends on the starting vertex. When the
process is irreducible, however, for any fixed $A \subseteq X$, survival/extinction in $A$
starting from $x$ are respectively equivalent to survival/extinction in $A$ starting from $y$, for all
$x, y \in X$. 
Equivalently, for a given $A \subseteq X$, the condition
$\mathbf{q}(x,A) < 1$ holds for some $x \in X$ if and only if it holds for all
$x \in X$.
Strong survival in $A$, by contrast, may still depend on the starting
vertex even in the irreducible case. If every site has a positive probability of
producing no offspring, then strong survival becomes independent of the starting
vertex as well (see \cite[Section~3]{cf:BZ14-SLS}).

Moreover, if a BRW is irreducible, $A,B \subset X$ are nonempty, finite subsets and $C \subset X$ is nonempty, then $\mathbf{q}(x,A)=\mathbf{q}(x,B) \ge \mathbf{q}(x,C)$ for all $x\in X$. The first equality holds since local survival does not depend on the target vertex and survival in a finite (nonempty) set is equivalent to local survival to a vertex in the set. The second inequality follows from the fact that every nonempty set contains a finite nonempty set.

 \begin{table}[h]
	\begin{tabular}{ |c|c| }
		\hline 
		\rowcolor{lightgray}
		Notation & Meaning  \\ \hline
		%&   \\  
		${\mathbf{q}}(x,A)$ &  probability of extinction in $A$, starting from $x$\\ 
		%&    \\ 
		\hline
		${\mathbf{q}}(A)$ &  vector whose components are ${\mathbf{q}}(\cdot,A)$ \\ 
		%&    \\ 
		\hline
		${\mathbf{q}}(x,A)=1$ &  (a.s.) extinction in $A$, starting from $x$  \\
		\hline
		${\mathbf{q}}(x,A)<1$ &  survival (w.p.p.) in $A$, starting from $x$ \\ 
		%&   \\ 
		\hline
		${\mathbf{q}}(x,A)={\mathbf{q}}(x,X)<1$ & strong survival in $A$, starting from $x$   \\ 
		%&    \\ 
		\hline
	\end{tabular}
	\captionof{table}{Extinction probabilities, surivival and extinction of discrete-time BRWs. }\label{tb:table1bis}
\end{table}
 
 It is well-known that extinction probability vectors are fixed points of the generating function $G_\boldmu:[0,1]^X \to [0,1]^X$ defined as follows:
\begin{equation}\label{eq:genfun}
	G_\boldmu({\mathbf{z}}|x):= \sum_{f \in S_X} \mu_y(f) \prod_{y \in X} {\mathbf{z}}(y)^{f(y)}, %=\E^x \big[\prod_{y \in X}\mathbf{z}(y)^{\eta_1(y)}\big],
\end{equation}
where $G_\boldmu({\mathbf{z}}|x)$ is the $x$-coordinate of $G_\boldmu({\mathbf{z}})$.
For the properties of this generating function, see for instance \cite[Section 2.1]{cf:BZgerm} and the references therein.

\subsection{Equivalent sets and comparison of extinction probability vectors}
\label{subsec:equivalentsets}

Given a BRW $(X,\boldmu)$,
we examine survival and define relations between subsets $A,B \subseteq X$.
\begin{Definition}\label{def:equivalentsets} 
Let $A,B \subseteq X$.
\begin{enumerate}
\item If  $\mathbb P^x (\mathcal{S}(B) \cap \mathcal S(A))=\pr^x(\mathcal{S}(A))$, for all $x \in X$, we write
$A \Rightarrow B$ and we say that survival in $A$ implies survival in $B$.
 \item  If $\mathbb P^x (\mathcal{S}(B) \cap \mathcal S(A))<\pr^x(\mathcal{S}(A))$, 
	for some $x \in X$,   we write  $A \nRightarrow B$ and we say that survival in $A$ does not imply survival in $B$.
 \item
If $A \Rightarrow B$ and $B \Rightarrow A$, we say that the two sets are equivalent and use the notation 
  $A \Leftrightarrow B$.
 \item If $A \nRightarrow B$ and $B \nRightarrow A$, we write $A \nLeftrightarrow B$.
 \end{enumerate}
 \end{Definition}
By Definition \ref{def:equivalentsets}, $A \Leftrightarrow A$ for all $A \subseteq X$. When it is necessary to stress the dependence on $\boldmu$,
 we write $\stackrel{\boldmu}{\Rightarrow}$ and $\stackrel{\boldmu}{\Leftrightarrow}$.
 Note that $A \Rightarrow B$ means that almost all trajectories surviving in $A$, survive also in $B$.
 Similarly, 
 $A \nRightarrow B$ means that, for some starting point, there is a positive probability of surviving in $A$ while going extinct in $B$, or equivalently
 $\mathbb P^x (\mathcal{E}(B) \cap \mathcal S(A))>0$.

Henceforth, we employ the natural componentwise partial order on vectors: namely,
$\mathbf{q}(A) \le \mathbf{q}(B)$ if and only if $\mathbf{q}(x,A) \le \mathbf{q}(x,B)$
for all $x \in X$. Accordingly, $\mathbf{q}(A) < \mathbf{q}(B)$ if and only if
$\mathbf{q}(x,A) \le \mathbf{q}(x,B)$ for all $x \in X$ and
$\mathbf{q}(x_0,A) < \mathbf{q}(x_0,B)$ for some $x_0 \in X$; in particular, we note that
$\mathbf{q}(A) \nleq \mathbf{q}(B)$ if and only if $\mathbf{q}(x_0,A) > \mathbf{q}(x_0,B)$
for some $x_0 \in X$.
We show that the relations between $A,B\subseteq X$ can be investigated through the order of the respective extinction probabilities.

The following result links the relations between different sets and their extinction probability vectors.
Its statement is a rephrasing of \cite[Corollary 5.1]{cf:BBHZ} and part of \cite[Theorem 4.1]{cf:BBHZ}, therefore we omit the proof.

\begin{Theorem}\label{cor:equivalence}
	Let $A,B \subseteq X$.
	\begin{enumerate}
		\item $A \Rightarrow B$ if and
		only if $\mathbf{q}(A) \ge \mathbf{q}(B)$.
		\item $A \nRightarrow B$ if and
		only if $\mathbf{q}(x,A) < \mathbf{q}(x,B)$, for some $x\in X$.
		\item $A \nRightarrow B$ if and
		only if there is a positive probability of survival in $A$ without ever visiting $B$, starting from some $x\in X \setminus B$.
		\item $A \nRightarrow B$ if and
		only if there is a positive probability of survival in $A$ and extinction in $B$, starting from some $x\in X$.
		\item  $A \Leftrightarrow B$ if and
		only if $\mathbf{q}(A)  = \mathbf{q}(B)$.
		\item $A \nLeftrightarrow B$ if and only if there is no order relation between $\mathbf{q}(A)$ and $\mathbf{q}(B)$.
	\end{enumerate}
\end{Theorem}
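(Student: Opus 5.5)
The key tool is the identity
\[
\mathbf{q}(x,B)-\mathbf{q}(x,A)=\pr^x(\mathcal S(A)\cap\mathcal E(B))-\pr^x(\mathcal S(B)\cap\mathcal E(A)),
\]
which follows by writing $\pr^x(\mathcal E(B))=\pr^x(\mathcal E(B)\cap\mathcal E(A))+\pr^x(\mathcal E(B)\cap\mathcal S(A))$ and doing the same for $\mathcal E(A)$. Note also that $A\Rightarrow B$ for all $x$ is the same as $\pr^x(\mathcal S(A)\cap\mathcal E(B))=0$ for all $x$. Hence if $A\Rightarrow B$, the identity gives $\mathbf q(x,B)-\mathbf q(x,A)=-\pr^x(\mathcal S(B)\cap\mathcal E(A))\le 0$, so $\mathbf q(A)\ge\mathbf q(B)$. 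This is one direction of (1). Item (4) is just a restatement of the definition of $\nRightarrow$, and (5) and (6) follow from (1) and (2) by symmetry once those are established.

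The substantive part is the converse: $A\nRightarrow B$ should imply $\mathbf q(x_0,A)<\mathbf q(x_0,B)$ for some $x_0$. This gives the other direction of (1), since (2) is its contrapositive, and it is exactly where (3) enters. So I would prove (3) first.

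Suppose $\pr^x(\mathcal S(A)\cap\mathcal E(B))>0$. On $\mathcal E(B)$ there is a last time $N$ at which $B$ is occupied, and some $n$ has $\pr^x(\mathcal S(A)\cap\{\eta_m(B)=0\ \forall m\ge n\})>0$. Conditioning on $\mathcal F_n$ and using the branching property, the configuration $\eta_n$ is finite. Survival in $A$ of the whole population requires survival in $A$ of the progeny of some single particle at time $n$. Every particle of $\eta_n$ has descendants avoiding $B$ forever on this event. Therefore some particle at a site $x_0$ (necessarily $x_0\notin B$) has, with positive probability, progeny that survives in $A$ and never visits $B$. The converse of (3) is immediate, since never visiting $B$ implies $\mathcal E(B)$.

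Now, starting from $x_0$, let $\mathcal N$ be the event ``survive in $A$ without ever visiting $B$''. On $\mathcal N$ we have $\mathcal S(A)\cap\mathcal E(B)$, and we want to make $\pr^{x_0}(\mathcal S(B)\cap\mathcal E(A))$ strictly smaller than $\pr^{x_0}(\mathcal S(A)\cap\mathcal E(B))$. This is the main obstacle: the identity alone does not exclude that both terms are positive and balance. I would handle it by choosing the starting point cleverly, iterating the same conditioning idea. If $\pr^{x_0}(\mathcal S(B)\cap\mathcal E(A))>0$ is comparable, consider the generating-function view. The vector $\mathbf u$ with $\mathbf u(y)=\pr^y(\mathcal E(A)\cap\mathcal E(B))$ equals $\mathbf q(A\cup B)$. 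The inequality $\mathbf q(A)\ge\mathbf q(B)$ everywhere, together with $\mathbf q(A\cup B)\le\mathbf q(A)\wedge\mathbf q(B)$ and the fixed-point property $G(\mathbf q(C))=\mathbf q(C)$, should force $\pr^y(\mathcal S(A)\cap\mathcal E(B))=0$.

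To carry this out, I would use the martingale $\mathbf q(\eta_n,C):=\prod_y\mathbf q(y,C)^{\eta_n(y)}$, which converges a.s. to $\ident_{\mathcal E(C)}$, and apply it with $C=B$ on the event $\mathcal N$. Since $\mathcal N$ never visits $B$ and particles are in $X\setminus B$, if $\mathbf q(\cdot,B)\le\mathbf q(\cdot,A)$ then along $\mathcal N$ we get $\prod\mathbf q(y,B)^{\eta_n(y)}\le\prod\mathbf q(y,A)^{\eta_n(y)}\to 0$, because $A$ survives. This contradicts convergence to $1$ on $\mathcal E(B)$, and it finishes (2) and hence (1).
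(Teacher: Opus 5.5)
Your proof is correct. The paper gives no proof of this theorem; it quotes Theorem~4.1 and Corollary~5.1 of Bertacchi--Braunsteins--Hautphenne--Zucca. So there is no in-paper argument to compare with, and yours works as a self-contained replacement. The following parts are all fine:
\begin{itemize}
\item the identity for $\mathbf{q}(x,B)-\mathbf{q}(x,A)$, which gives the forward direction of (1);
\item the observation that (4) is the definition;
\item the conditioning and union-bound proof of (3), including $x_0\notin B$ because $\eta_n(B)=0$ on your event;
\item the derivation of (2), (5) and (6) from (1).
\end{itemize}

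Two remarks.

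First, the converse of (1) rests entirely on the claim $\prod_y \mathbf{q}(y,C)^{\eta_n(y)}\to \ident_{\mathcal{E}(C)}$ a.s., which you state without justification. It takes one line. Put $\mathcal{F}_n=\sigma(\eta_0,\dots,\eta_n)$. The Markov property, the a.s.\ finiteness of $\eta_n$, the branching property and the shift-invariance of $\mathcal{E}(C)$ give $\prod_y \mathbf{q}(y,C)^{\eta_n(y)}=\pr^x(\mathcal{E}(C)\mid\mathcal{F}_n)$. L\'evy's upward theorem then gives the limit $\ident_{\mathcal{E}(C)}$.

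Second, once you have this, you can drop the detour through (3), the event $\mathcal{N}$ and the speculative generating-function paragraph (``choosing the starting point cleverly'', ``should force''). Your contradiction works verbatim on $\mathcal{S}(A)\cap\mathcal{E}(B)$. More directly, $\mathbf{z}\mapsto\prod_y\mathbf{z}(y)^{\eta_n(y)}$ is monotone, so $\mathbf{q}(A)\ge\mathbf{q}(B)$ gives $\ident_{\mathcal{E}(A)}\ge\ident_{\mathcal{E}(B)}$ a.s.\ under every $\pr^x$. That is, $\mathcal{S}(A)\subseteq\mathcal{S}(B)$ a.s., which is exactly $A\Rightarrow B$. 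Item (3) is then needed only for its own statement.

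Finally, the middle paragraph aims at a strict inequality at the particular point $x_0$. The contradiction does not give that; it gives some $x$ with $\mathbf{q}(x,A)<\mathbf{q}(x,B)$. That is all (2) requires, so nothing is lost, but the stated goal should be adjusted.
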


\begin{Remark} \label{rem:1}
	If $\Delta \Rightarrow A$ then $A \Leftrightarrow A \cup \Delta$. Equivalently, $\mathbf{q}(\Delta) \ge \mathbf{q}(A)$ implies $\mathbf{q}(A)=\mathbf{q}(A \cup \Delta)$.
	Indeed, survival in $A$ implies survival in $A \cup \Delta$; conversely, survival in $A \cup \Delta$ either implies survival in $A$ or survival in $\Delta$ which, in turn, implies survival in $A$.
	
	Moreover, if $(X,\boldmu)$ is irreducible, then $A \Rightarrow B$ for every nonempty $A,B \subseteq X$ where $A$ is finite 
	(by the same Borel-Cantelli argument that we discussed in Section~\ref{subsec:q1} after 
	Definition \ref{def:survival}).
	Moreover,  $\mathbf{q}^\boldmu(C)$ does	not depend on the choice of the finite nonempty set $C \subseteq X$.
	 Therefore, 
	$A \Leftrightarrow B$ for all $A,B \subseteq X$ nonempty and finite. 
\end{Remark}

\subsection{Classical continuous-time BRWs and critical parameters}
\label{subsec:classic-continuous}

Given an at most countable $X$ and a nonnegative matrix $K=(k_{xy})_{x,y\in X}$ such that $\sum_{y \in X} k_{xy}<+\infty$ for every $x \in X$, 
%		$K=(k_{xy})_{(x,y)\in X\times X}$,
one can define a family of continuous-time Branching Random Walks
$\{\eta_t\}_{t\ge0}$, where $\eta_t(x)$ represents the number of particles alive at time $t$ at site $x$, for any $x\in X$.
The family is indexed by the reproductive speed parameter $\lambda>0$ and 
we denote it by  $(X,K)$.
With a slight abuse of terminology it is often referred to
as \emph{the} continuous-time BRW  or as the 
 \emph{classical} continuous-time BRW associated to $K$.
Its dynamics is defined as follows: each particle has an exponentially distributed lifetime with parameter 1.
During its lifetime each particle alive at $x$ breeds into $y$ according to the arrival times of its own Poisson point process with
intensity $\lambda k_{xy}$ (representing the reproduction rate).

It is clear that given two values $\lambda_1<\lambda_2$, one can construct the associated processes $\{\eta^1_t\}_{t\ge0}$ and
$\{\eta^2_t\}_{t\ge0}$ such that $\eta^1_t(x)\le\eta^2_t(x)$ for all $x\in X$, $t\ge0$, almost surely. 
This is a consequence of the fact that the law of the $\lambda_2$-process stochastically dominates 
the law of the $\lambda_1$-process (see Definition \ref{def:ordering}).
The extinction probability vectors 
depend on $\lambda$ and are nonincreasing in $\lambda$. We use the notation
$\mathbf{q}(x,A|\lambda)$ to denote the probability of extinction at the set $A$, for the $\lambda$-BRW.
We remark that
in general.
% even when the BRW $(X,K)$ is homogeneous, the map
% 	HOMOGENEOUS MEANS???
	$\lambda \mapsto \mathbf{q}(A|\lambda)$ does not need to be continuous in $[0,1]^X$
	with respect to the pointwise convergence topology. 
	Explicit discontinuous examples 
	%when $C=X$ or $C$ is finite 
	can be found in \cite[Remark 4.8]{cf:Z1}.	 
%\begin{Remark}
%	\label{rem:finitesets}
%	Recall that if a BRW $(X, \boldmu)$ is irreducible, then $\mathbf{q}^\boldmu(C)$ does
%	not depend on the choice of the finite nonempty set $C \subseteq X$.
%	Furthermore, for a continuous-time BRW $(X,K)$, the map $\lambda \mapsto \mathbf{q}(A|\lambda)$
%	is nonincreasing, since $\boldmu_\lambda \succeq \boldmu_{\lambda^*}$ for all
%	$\lambda \ge \lambda^* > 0$ (see the discussion preceding equation~\eqref{eq:criticalparametersA}).
%	\\
%	We remark that, even when the BRW $(X,K)$ is homogeneous, the map
%	$\lambda \mapsto \mathbf{q}(A|\lambda)$ does not need to be continuous in $[0,1]^X$
%	with respect to the pointwise convergence topology. 
%	Explicit examples when $C=X$ or $C$ is finite can be found in
%	\cite[Remark 4.8]{cf:Z1}.	 
%\end{Remark}
The monotonicity in $\lambda$ allows to define the critical parameters of the family, for any fixed set $A\subseteq X$ 
(with the convention that $\inf(\emptyset)=0$):
\begin{equation}\label{eq:criticalcts}
		\lambda(x,A):=
		\inf \{\lambda>0\colon \,
		\mathbf{q}(x,A|\lambda)<1 \}.
\end{equation}
This definition generalizes the two critical parameters that can be found in the literature:
the \emph{global critical parameter} $\lambda_w(x):=\lambda(x,X)$ and the  \emph{local critical parameter}
 $\lambda_s(x):=\lambda(x,\{x\})$, see also Table \ref{tb:tablelambda}.
  Characterizations of $\lambda_w(x)$ and $\lambda_s(x)$  can be found in the literature; see, for example
  \cite{cf:BZ2, cf:BZ17, cf:BZ2025, cf:Ligg1, cf:PemStac1, cf:Z1}.

The main difference between discrete-time BRWs and continuous-time BRWs is that in the second case, generations overlap and individuals
of the $n$-th generation may be born after individuals of the $m$-th, even if $n<m$.
Nevertheless, if we are interested in the probability of extinction in any given set $A$, then a continuous-time BRW shares its
extinction probabilities with its \emph{discrete-time counterpart}.
This process is defined as follows: given a particle at generation 0 (i.e. alive at time 0), its children, born during its whole lifetime in the continuous-time process,  belong to the first generation
(i.e. are alive at time $n=1$). We proceed by induction: each particle alive at time $n$ is replaced at time $n+1$ by the children that its
corresponding particle in the continuous-time process generates during its lifetime.
Clearly, from the knowledge of the discrete-time counterpart of $(X,K)$, we cannot retrieve how many particles are alive at time $t$ (and where), but the information about survival and extinction is intact.
It is easy to show that the expected number of children from $x$ to $y$, for the discrete-time counterpart of a classical continuous-time BRW with parameter $\lambda$ and rate matrix $K$, is $m_{xy}=m_{xy}(\lambda)=\lambda k_{xy}$. The first moment matrix $M$ of this discrete-time counterpart equals $\lambda K$.
Thanks to the associated discrete-time counterpart, we extend the notion of irreducibility to 
 continuous-time process. More precisely, a continuous-time process is irreducible (by definition) if and only if its discrete-time counterpart is
 irreducible.
\\
  The local critical parameter $\lambda_s$ can be characterized in terms of the first-moment matrix and of the
   first arrival generating function $\Phi_\lambda$.
   We define 
$\Phi_\lambda(x,y|t):=\sum_{n =1}^\infty \varphi_{xy}^{(n)} t^n$, where
\[\begin{cases}
\displaystyle \varphi^{(n+1)}_{xy}
:=\sum_{w \in X \setminus\{y\}} m_{x w} \varphi^{(n)}_{w y}, \, \forall n \in \mathbb{N}, n \ge 1, \\
\displaystyle      \varphi^{(0)}_{wz}:=0, \, \varphi^{(1)}_{wz}:=m_{wz}, \, \forall w,z \in X.\\
\end{cases}
\]
Roughly speaking, $\varphi^{(n)}_{xy}$ is the expected number of particles alive at $y$ at time $n$,
when the initial state is just one particle at $x$ and the
process behaves like a BRW except that every particle reaching
$y$ at any time $i <n$ is immediately killed (before breeding).
  \begin{Theorem}\label{th:lambdas}
  (\cite[Theorem 4.1]{cf:BZ2})
  Given a continuous-time BRW $(X,K)$, let $k^{(n)}(\cdot,\cdot)$ and  $m^{(n)}(\cdot,\cdot)$ be the elements of the matrices $K^n$ and $M^n$, respectively.
  The local critical parameter satisfies
  \begin{equation}\label{eq:lambdas1}
  \begin{split}
  \lambda_s(x)&=1/\limsup_{n\to\infty}\sqrt[n]{k^{(n)}(x,x)}=
  \sup\{\lambda\in\R\colon \sum_n m_{xx}^{(n)}(\lambda)<+\infty\}\\
  & =
  \max\{ \lambda 
  %> 0
\in {\mathbb R}
\colon \Phi_\lambda(x,x|1)\leq 1\}=
\sup 
\{ \lambda 
%> 0
\in {\mathbb R}:
\colon\Phi_\lambda(x,x|1)< 1\}.
  \end{split}
  \end{equation}
  \end{Theorem}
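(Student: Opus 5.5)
The plan is to reduce everything to a comparison between local survival of the BRW and a single-type Galton--Watson process obtained by looking only at returns to $x$. I would work throughout with the discrete-time counterpart, whose first moment matrix is $M=\lambda K$. Then $m^{(n)}_{xx}(\lambda)=\lambda^n k^{(n)}(x,x)$, and the first equality follows from the other characterizations: the series $\sum_n \lambda^n k^{(n)}(x,x)$ converges for $\lambda<1/\limsup_n \sqrt[n]{k^{(n)}(x,x)}$ and diverges for larger $\lambda$, by the Cauchy--Hadamard root test. So the real content is to identify $\lambda_s(x)$ with the threshold where $\Phi_\lambda(x,x|1)$ crosses $1$, and to relate that threshold to the radius of convergence.

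\textbf{Step 1 (embedded Galton--Watson process).} Starting from one particle at $x$, I follow each particle's lineage until its first return to $x$ at a positive time; the particles reaching $x$ in this way are the ``children'' in a new branching process $Z$. By independence of reproductions and the strong branching structure, $Z$ is a Galton--Watson process whose offspring mean is $\sum_{n\ge1}\varphi^{(n)}_{xx}=\Phi_\lambda(x,x|1)$. This holds because $\varphi^{(n)}_{xx}$ is exactly the expected number of first-return descendants at generation $n$. Local survival at $x$ is equivalent to $Z$ surviving, since infinitely many visits to $x$ means infinitely many generations of $Z$. Here $Z$ has finitely many individuals per generation unless the mean is infinite; I will handle infinite expectation separately, since it gives survival anyway. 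Hence $\mathbf{q}(x,x|\lambda)<1$ iff $\Phi_\lambda(x,x|1)>1$, or $=1$ with $Z$ non-degenerate.

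\textbf{Step 2 (the critical case).} I would show that $\Phi_\lambda(x,x|1)=1$ forces extinction. If the offspring law of $Z$ is degenerate at $1$, the non-triviality assumption should rule this out, or at least it gives no survival beyond a single line. Otherwise the GW criterion gives extinction. Consequently $\lambda_s(x)=\sup\{\lambda\colon\Phi_\lambda(x,x|1)<1\}$. Also $\lambda\mapsto\Phi_\lambda(x,x|1)=\sum_n\lambda^n\varphi^{(n)}_{xx}(1)$ is a power series with nonnegative coefficients, hence increasing and left-continuous on its domain. So the set $\{\Phi_\lambda\le 1\}$ is a closed interval $(0,\lambda_s]$, which gives the max form.

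\textbf{Step 3 (renewal identity).} The standard first-return decomposition gives the generating-function identity
\[
\sum_{n\ge0} m^{(n)}_{xx}(\lambda)t^n=\frac{1}{1-\Phi_\lambda(x,x|t)}.
\]
Hence $\sum_n m^{(n)}_{xx}(\lambda)<\infty$ iff $\Phi_\lambda(x,x|1)<1$, which yields the series characterization. The main obstacle is the boundary behaviour at $\lambda=\lambda_s$: one must check that the sup of the convergence set coincides with the point where $\Phi$ reaches $1$. It could happen that $\Phi_{\lambda}(x,x|1)<1$ at the radius of convergence (the $\Phi$-transient case), in which case $\Phi$ jumps past $1$ immediately or becomes infinite. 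Monotonicity and left-continuity of $\Phi$ in $\lambda$ show that both sup's agree in every case, and this is what I would verify carefully.
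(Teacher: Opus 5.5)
The paper gives no proof of this statement; it only quotes \cite[Theorem 4.1]{cf:BZ2}. Your route is the standard one and the overall chain of equalities is sound: the first-return Galton--Watson process $Z$ with mean $\Phi_\lambda(x,x|1)$, the renewal identity, and Cauchy--Hadamard for $m^{(n)}_{xx}(\lambda)=\lambda^n k^{(n)}(x,x)$. One step, however, is wrong as written.

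\textbf{The critical case.} In Step~1 the degeneracy condition is reversed. When $\Phi_\lambda(x,x|1)=1$, a \emph{non}-degenerate $Z$ dies out a.s.; it is the degenerate case $Z_1\equiv 1$ that survives. Your fallback in Step~2 (``it gives no survival beyond a single line'') does not rescue this. A single line that returns to $x$ infinitely often \emph{is} local survival, since $\mathbf{q}(x,x|\lambda)=0$ in that case. So the degenerate case must be excluded, not dismissed. For continuous-time BRWs this is immediate from \eqref{eq:mucontinuous}: a particle at $x$ has no children at all with probability $1/(1+\lambda\sum_y k_{xy})>0$, hence $\pr(Z_1=0)>0$. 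For general discrete-time BRWs the standing non-triviality assumption also excludes it, but that needs an argument. With this fix, the correct dichotomy is: $\mathbf{q}(x,x|\lambda)<1$ if and only if $\Phi_\lambda(x,x|1)>1$ (possibly $=+\infty$).

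\textbf{Monotonicity and continuity.} To pass from this dichotomy to $\lambda_s(x)=\sup\{\lambda\colon\Phi_\lambda(x,x|1)<1\}$ you need \emph{strict} monotonicity of $\lambda\mapsto\Phi_\lambda(x,x|1)$. This does hold: it is a power series with nonnegative coefficients, vanishing at $0$ and not identically zero, so it is strictly increasing where finite, and $\{\Phi_\lambda(x,x|1)=1\}$ has at most one point. Left-continuity should be taken for the $[0,+\infty]$-valued map on all of $(0,+\infty)$, which follows by monotone convergence. That is what makes $\{\lambda\colon\Phi_\lambda(x,x|1)\le 1\}$ closed on the right and gives the max form.

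\textbf{Smaller wording points.} Identifying $\mathcal S(\{x\})$ with ``infinitely many particles ever at $x$'' uses that every generation is a.s. finite, since each $f\in S_X$ has finite total mass; say so. Also, ``$Z$ has finitely many individuals per generation unless the mean is infinite'' should read ``if the mean is finite, then $Z_1<\infty$ a.s.''.

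\textbf{Step~3 is easier than you suggest.} The ``main obstacle'' there is not one. As an identity between series of nonnegative terms, $\sum_{n\ge0}m^{(n)}_{xx}(\lambda)=\sum_{k\ge0}\Phi_\lambda(x,x|1)^k$. So the sets $\{\lambda\colon\sum_n m^{(n)}_{xx}(\lambda)<\infty\}$ and $\{\lambda\colon\Phi_\lambda(x,x|1)<1\}$ coincide exactly, not merely their suprema. No separate analysis of the case where the Green function is finite at its radius of convergence is needed.
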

The global critical parameter of the continuous-time BRW has a characterization in terms of solutions of certain inequalities
(see \cite[Theorem 4.2]{cf:BZ2}), which is unfortunately impractical to use in many cases.
In general $\lambda_w(x)$ depends on the whole offspring distributions and not only on the first-moment matrix $M$.
Theorem \ref{th:lambdaw} provides a lower bound in terms of $M$, which coincides with the actual
value of$\lambda_w(x)$ when
 the process satisfies some regularity
assumptions, such as transitivity or being an $\mathcal{F}$-BRW for every $\lambda>0$ (see for the definition
 \cite[Section 2.4]{cf:BZ17}).
  \begin{Theorem}\label{th:lambdaw}
  (\cite[Theorem 3.2]{cf:BZ14-SLS}, \cite[Theorem 4.1]{cf:Z1})
  Given a continuous-time BRW $(X,K)$. Then
  \begin{equation}\label{eq:lambdaw1}
  \lambda_w(x)\ge 1/\liminf_{n\to\infty}\sqrt[n]{\sum_{y\in X}k^{(n)}(x,y)}.
  \end{equation}
  Equality holds if $(X,K)$ is an $\mathcal F$-BRW.
  \end{Theorem}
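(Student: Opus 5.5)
We need to prove Theorem \ref{th:lambdaw}: $\lambda_w(x)\ge 1/\liminf_n \sqrt[n]{\sum_y k^{(n)}(x,y)}$, with equality for $\mathcal F$-BRWs.

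The plan is to work with the discrete-time counterpart, whose first moment matrix is $M=\lambda K$, so that $m^{(n)}(x,y)=\lambda^n k^{(n)}(x,y)$. Write $\ell:=\liminf_n \sqrt[n]{\sum_y k^{(n)}(x,y)}$ and fix $\lambda<1/\ell$. The goal is global extinction for this $\lambda$. By definition of $\liminf$ there is a subsequence $n_j\to\infty$ with
\[
\textstyle\sum_y m^{(n_j)}(x,y)=\lambda^{n_j}\sum_y k^{(n_j)}(x,y)\le (\lambda(\ell+\varepsilon))^{n_j}\to 0,
\]
where $\varepsilon>0$ is chosen with $\lambda(\ell+\varepsilon)<1$. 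The left-hand side is the expected total population $\mathbb E^x[\sum_y\eta_{n_j}(y)]$ of generation $n_j$. Markov's inequality then gives $\pr^x(\sum_y \eta_{n_j}(y)\ge1)\to0$.

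Global survival is the decreasing intersection of the events $\{\sum_y\eta_n(y)>0\}$, because an empty generation stays empty. Hence $\pr^x(\mathcal S(X))\le \pr^x(\sum_y\eta_{n_j}(y)>0)\to0$, so $\mathbf q(x,X|\lambda)=1$. Since this holds for every $\lambda<1/\ell$, we get $\lambda_w(x)\ge 1/\ell$. If $\ell=0$ the bound reads $+\infty$ and the same argument applies to every $\lambda$; if $\ell=\infty$ there is nothing to prove. Note that only a subsequence is needed, which is exactly why the $\liminf$, and not the $\limsup$, appears.

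For equality in the $\mathcal F$-BRW case, take $\lambda>1/\ell$ and show global survival. The $\mathcal F$-BRW structure (a finite quotient, or more precisely a locally isomorphic projection onto a finite set $Y$ compatible with the reproduction laws) lets me project the process onto a multitype branching process with finitely many types. The projected process has first moment matrix $\lambda\tilde K$, where $\tilde K$ is the finite matrix obtained by summing rows of $K$ over fibres. For this quotient, $\sum_y k^{(n)}(x,y)$ equals the row sum of $\tilde K^n$, so $\ell$ is governed by the Perron–Frobenius eigenvalue of $\tilde K$ restricted to the classes accessible from $x$, and the $\liminf$ becomes a genuine limit equal to that spectral radius. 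Classical theory of finite multitype Galton–Watson processes then gives survival with positive probability once $\lambda\rho(\tilde K)>1$, using the standing nonsingularity assumption that each class contains a vertex with a positive probability of producing a number of children different from $1$. Global survival of the projection coincides with global survival of the original process, so $\lambda_w(x)\le 1/\ell$.

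The main obstacle is this equality part. It requires the precise definition of $\mathcal F$-BRW from \cite{cf:BZ17} and care with reducible quotients: one has to pick an irreducible class reachable from $x$ that attains the spectral radius and show that survival there is possible. The first step would be to reduce to that class and apply the irreducible finite-type Galton–Watson criterion.
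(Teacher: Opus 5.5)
Your proposal is correct and follows the standard argument behind the results the paper cites; the paper gives no proof of its own. The lower bound holds because the expected size $\lambda^n\sum_y k^{(n)}(x,y)$ of the $n$th generation of the discrete-time counterpart tends to $0$ along a $\liminf$ subsequence, and equality for $\mathcal F$-BRWs follows by projecting onto the finite-type process, which has the same total population and whose row sums of $\tilde K^n$ grow like the largest Perron root among the classes accessible from the type of $x$.

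One wording slip: $\tilde k_{ab}$ is obtained by summing the entries $k_{xz}$ of a single row $x\in g^{-1}(a)$ over the columns $z\in g^{-1}(b)$, not by summing rows. Also, the nonsingularity assumption is not needed in the direction you use, since a class whose mean matrix has Perron root $\lambda\rho>1$ cannot be singular.
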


\medskip

\begin{table}
	\begin{tabular}{ |c|c| }
		\hline 
		\rowcolor{lightgray}
		Notation & Meaning  \\ \hline
		$\lambda(x, A)$ & threshold between (a.s.) extinction and survival (w.p.p.) in $A$ \\
		\hline
		$\lambda_w(x)$ & threshold between (a.s.)  extinction and survival (w.p.p.) in $X$ \\ 
		%&   \\ 
		\hline
		$\lambda_s(x)$ & threshold between (a.s.)  extinction and survival (w.p.p.) in $x$   \\ 
		%&    \\ 
		\hline
	\end{tabular}
	\captionof{table}{Critical parameters. The BRW starts with one individual in $x\in X$. 
	The abbreviations a.s. and w.p.p. stand for \emph{almost sure} and \emph{with positive probability}, respectively.}\label{tb:tablelambda}
\end{table}

\section{Germ-monotone families of Branching Random Walks}
\label{sec:GMBRW}

 \subsection{Germ domination and critical parameters}
 \label{subsec:germd}

     Since the discrete-time counterpart of a 
  classical continuous-time BRW is indexed by $\lambda$ and shares its extinction probabilities, the expression 
  of $\lambda(x,A)$ in \eqref{eq:criticalcts} can be equivalently evaluated viewing the extinction probabilities $ \mathbf{q}(x,A|\lambda)$
  as those of the continuous-time process or of the discrete-time one. In particular we say that $\lambda(x,A)$ is a critical parameter
  of the discrete-time counterpart as well. 
  This motivates us to extend the definition of critical parameters to more general families
  of discrete-time BRWs indexed by a real parameter $\lambda$. 
  The main feature of the critical parameters is the fact that if $\lambda $ is smaller than the critical parameter, then there is almost sure 
  extinction, and if it is larger, then there is survival with positive probability. In the classical case this is
   due to the monotonicity in $\lambda$ of these processes.
  More precisely, the stochastic monotonicity of the family implies that the functions 
  $\lambda \mapsto \mathbf{q}(x,A|\lambda)$ are monotone and nonincreasing. It turns out that stochastic monotonicity is not needed,
  a weaker monotonicity, germ-monotonicity, suffices to define the critical parameters that play the role of thresholds between extinction and survival.
  The following definition recalls different notions of order for BRWs: the usual stochastic order $\succeq$ and two weaker order relations, the pgf order and the germ order,
  which are based on the  generating functions. These relations have first been introduced in
  \cite{cf:Hut2022} for translation invariant BRWs and then extended to general BRWs in \cite{cf:BZgerm}.

\begin{Definition}
	\label{def:ordering}
	Let $\boldmu:=\{\mu_x\}_{x \in X}$ and $\boldnu:=\{\nu_x\}_{x \in X}$ be two families of measures on 
	%$\mathbb{R}^X$ with support on 
	$S_X$.
	Let $G_{\boldmu}$ and $G_\boldnu$ be the associated generating functions.
	\begin{enumerate}
		\item 
		We say that $\boldmu$ \emph{stochastically dominates} $\boldnu$,
		$\boldmu \succeq \boldnu$ if and only if 
		$\mu_x \succeq \nu_x$ for all $x \in X$, that is, if and only if
		given a non-decreasing measurable function $F\colon S_X\to \R$, we have
		%for every $\mathbb{R}^X$-valued nondecreasing measurable function $F$ we have
		$\int F \diff \mu_x \ge \int F \diff \nu_x$ for all $x \in X$ such that the integrals are well defined.
		\item We say that $\boldmu$ \emph{pgf dominates} $\boldnu$,
		$\boldmu \gepgf \boldnu$ if and only if
		$G_\boldmu(\mathbf{z}) \le G_\boldnu(\mathbf{z})$ for all $\mathbf{z} \in [0,1]^X$.
		\item We say that $\boldmu$ \emph{germ dominates} $\boldnu$, 
		 $\boldmu \gegerm \boldnu$ if and only if there exists $\delta \in [0,1)$
		$G_\boldmu(\mathbf{z}) \le G_\boldnu(\mathbf{z})$ for all $\mathbf{z} \in [\delta,1]^X$.
	\end{enumerate}
	If $\# X=1$, that is, $\boldmu=\{\mu\}$ and $\boldnu=\{\nu\}$,
	%contains just one measure each, say $\mu$ and $\nu$ 
	then we simply write $\mu \gepgf \nu$ and $\mu \gegerm \nu$.
\end{Definition}
We observe that $\boldmu \succeq \boldnu \ \Rightarrow \boldmu \gepgf \boldnu \ \Rightarrow \ \boldmu \gegerm \boldnu$, but the reverse implications do not hold. 
%%%%%%%% parte che era Remark sul coupling e ordine stocastico
Clearly $G_\boldmu(\mathbf{z}) \le G_\boldnu(\mathbf{z})$ if and only if $G_\boldmu(\mathbf{z}|y) \le G_\boldnu(\mathbf{z}|x)$ for all $x \in X$; thus,
$\boldmu \gegerm \boldnu$ (with a certain $\delta <1$ ) if and only if $\mu_x \gegerm \nu_x$ for all $x \in X$ (with $\delta_x$ such that $\sup_{x\in X}\delta_x\le \delta<1$).

We employ the germ order of BRWs in the following definition.

\begin{Definition}
	\label{def:onepardependentfamily}
	Consider a family of measures $\Mfrak:=\{\mu_{x,\lambda}\}_{x \in X, \lambda >0}$ on $S_X$ such that if $\lambda\ge\lambda^*$, then $\boldmu_\lambda \gegerm \boldmu_{\lambda^*}$ (where $\boldmu_\lambda:=\{\mu_{x,\lambda}\}_{x \in X}\}$).  The family of BRWs $\{(X,\boldmu_\lambda)\}_{\lambda>0}$ is called 
	%\emph{germ-monotone family of BRWs} or simply 
	\emph{germ-monotone BRW} (in short, \emph{GMBRW}) and denoted by $(X, \Mfrak)$. Given $\lambda>0$, the $\lambda$-BRW associated to the GMBRW is denoted by $(X,\boldmu_\lambda)$.
	
	A GMBRW $(X, \Mfrak)$ is called \emph{regular} if and only if for all $x \in X$ the irreducible class $[x]$ induced by $(X, \boldmu_\lambda)$ does not depend on $\lambda>0$. 
	In particular, a GMBRW is called \emph{irreducible} if and only if $(X,\boldmu_\lambda)$ is irreducible for all $\lambda >0$ (in this case it is obviously regular). 
\end{Definition}

We note that it is possible to find germ-monotone BRWs which are not stochastically monotone, see Example \ref{ex:nonstochmon}.
Since the extinction probabilities depend on $\lambda$, given  a GMBRW $(X,\Mfrak)$ and a fixed $\lambda>0$, we denote by $\mathbf{q}^{\Mfrak}(A|\lambda)$ (or $\mathbf{q}^{\boldmu_\lambda}(A)$) the extinction probability vector for the BRW $(X,\boldmu_\lambda)$. When the family $\Mfrak$ is clear from the context, we simply write $\mathbf{q}(A|\lambda)$.
We are now able to define the critical parameters of a GMBRW.

\begin{Definition}\label{def:critGM}
Given a GMBRW $(X,\Mfrak)$,
$x \in X$ and $A \subseteq X$, we 
defined 
\begin{equation}\label{eq:criticalparametersA}
		\lambda^\Mfrak(x,A):=
		\inf \{\lambda>0\colon \,
		\mathbf{q}^\Mfrak(x,A|\lambda)<1 \}.
\end{equation}
When it is not important to emphasize the GMBRW $(X,\Mfrak)$ then we simply write $\lambda(x,A)$ instead of $\lambda^\Mfrak(x,A)$.
\end{Definition}

We observe that  in  general $\lambda \mapsto \mathbf{q}^\Mfrak(x,A|\lambda)$  does not need to be a monotone function.
Thus, the fact that $\lambda^\Mfrak(x,A)$ is a threshold between extinction and survival is not straightforward.
However, it is a consequence of the fact that if $\boldmu\gegerm\boldnu$, then extinction for $(X,\boldmu)$ implies
 extinction for $(X,\boldnu)$ 
 (this was proven in \cite[Theorem 1.3]{cf:Hut2022} for a restricted class of BRWs and extended to the most general case in \cite[Theorem 4.1]{cf:BZgerm}).
\begin{Proposition}\label{pro:criticalGM}
Given a GMBRW $(X,\Mfrak)$, for all
$x \in X$ and $A \subseteq X$,  $\lambda^\Mfrak(x,A):=
\sup \{\lambda>0\colon \,
\mathbf{q}^\Mfrak(x,A|\lambda)=1 \}$.
Moreover,  if $\boldmu_\lambda \gepgf \boldmu_{\lambda^*}$ (or $\boldmu_\lambda \succeq \boldmu_{\lambda^*}$), for all 
$\lambda\ge\lambda^*$, then the map $\lambda \mapsto \mathbf{q}(x,A|\lambda)$ is nonincreasing.
\end{Proposition}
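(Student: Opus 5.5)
The first claim rests on the fact, recalled just before the statement, that germ domination transfers extinction: if $\boldmu \gegerm \boldnu$ and $(X,\boldmu)$ goes extinct in $A$ starting from $x$, then so does $(X,\boldnu)$. This is \cite[Theorem 4.1]{cf:BZgerm}, and I will use it for an arbitrary set $A$ and starting vertex $x$. Put $I:=\{\lambda>0\colon \mathbf{q}(x,A|\lambda)<1\}$ and $J:=\{\lambda>0\colon \mathbf{q}(x,A|\lambda)=1\}$. These sets partition $(0,\infty)$.

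Next I show that $I$ is an up-set and $J$ a down-set. Take $\lambda\in J$ and $\lambda^*<\lambda$. Then $\boldmu_\lambda\gegerm\boldmu_{\lambda^*}$ by Definition~\ref{def:onepardependentfamily}, and extinction of $(X,\boldmu_\lambda)$ in $A$ from $x$ gives extinction of $(X,\boldmu_{\lambda^*})$, so $\lambda^*\in J$. Hence $J$ is an interval with left endpoint $0$ (possibly empty), and $I=(0,\infty)\setminus J$ is an interval unbounded above (possibly empty).

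The identity $\inf I=\sup J$ then follows from the conventions. If both sets are nonempty, they are complementary intervals in $(0,\infty)$, so they share an endpoint. If $J=\emptyset$, then $I=(0,\infty)$ and $\inf I=0=\sup\emptyset$, using the convention $\inf\emptyset=0$ for the defining infimum and the analogous convention $\sup\emptyset=0$. If $I=\emptyset$, then $\inf I=\inf\emptyset$, which must be read as $+\infty$ here, while $\sup J=+\infty$. The only point needing care is to state these conventions consistently. The convention $\inf\emptyset=0$ fixed earlier for continuous-time BRWs would clash in this degenerate case, so I would remark that $\inf\emptyset$ is taken as $+\infty$ when $I$ is empty.

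For the monotonicity claim, assume $\boldmu_\lambda\gepgf\boldmu_{\lambda^*}$ for $\lambda\ge\lambda^*$; stochastic domination implies pgf domination, so this case suffices. I would use the standard characterization of $\mathbf{q}(A)$ as a limit of iterates of generating functions. Specifically, $\mathbf{q}(A)$ is the limit as $n\to\infty$ of the extinction probabilities in $A$ after time $n$. Each of these is obtained by applying $G^{(n)}$ to the vector of probabilities of never visiting $A$, or equivalently via the characterization in \cite{cf:BZgerm}. The pgf comparison $G_{\boldmu_\lambda}\le G_{\boldmu_{\lambda^*}}$ on all of $[0,1]^X$, together with the fact that each $G$ is monotone nondecreasing in $\mathbf{z}$, gives $G_{\boldmu_\lambda}^{(n)}(\mathbf{z})\le G_{\boldmu_{\lambda^*}}^{(n)}(\mathbf{w})$ whenever $\mathbf{z}\le\mathbf{w}$, by induction.

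The main obstacle is producing a comparable starting vector for this iteration, since the probability of avoiding $A$ depends on the measure. The cleanest route is to cite the monotonicity statement of \cite{cf:BZgerm}: under pgf domination, $\mathbf{q}^{\boldmu}(A)\le\mathbf{q}^{\boldnu}(A)$ for every $A$. That result is proven there, or in \cite{cf:Hut2022}, by exactly this iteration argument. Applying it with $\boldmu=\boldmu_\lambda$ and $\boldnu=\boldmu_{\lambda^*}$ gives $\mathbf{q}(x,A|\lambda)\le\mathbf{q}(x,A|\lambda^*)$, which is the claimed monotonicity.
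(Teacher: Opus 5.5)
Your proposal is correct and follows the paper's proof. The threshold property comes from \cite[Theorem 4.1(ii)]{cf:BZgerm} applied to $\boldmu_\lambda\gegerm\boldmu_{\lambda^*}$; you phrase it as $J$ being a down-set and the paper as survival propagating upward, which is the same fact. Monotonicity under pgf or stochastic domination is obtained, exactly as in the paper, by citing \cite[Theorem 4.1(i)]{cf:BZgerm}, so your partial iteration sketch is not needed. Your remark on the degenerate cases is a genuine refinement that the paper glosses over. When $\mathbf{q}(x,A|\lambda)=1$ for every $\lambda$, the identity $\inf I=\sup J$ needs $\inf\emptyset=+\infty$ rather than the convention $\inf\emptyset=0$ stated earlier. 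When $J=\emptyset$, only $\sup\emptyset=0$ is used, since $I=(0,\infty)$ and no convention on $\inf$ is involved.
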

\begin{proof}
By \eqref{eq:criticalparametersA}, for all $\lambda<\lambda^\Mfrak(x,A)$, we have $\mathbf{q}^\Mfrak(x,A|\lambda)=1$.
Moreover, for all $\lambda>\lambda^\Mfrak(x,A)$, there exists $\lambda^*\in (\lambda^\Mfrak(x,A),\lambda)$ such that
$\mathbf{q}^\Mfrak(x,A|\lambda^*)<1$.
On the other hand, by definition of GMBRW, $\boldmu_\lambda \gegerm \boldmu_{\lambda^*}$.
It was proven in \cite[Theorem 4.1(ii)]{cf:BZgerm} that by germ-domination, if $\mathbf{q}^\Mfrak(x,A|\lambda^*)<1$, then
also $\mathbf{q}^\Mfrak(x,A|\lambda)<1$. This means that there is a positive probability of survival for all $\lambda>\lambda^\Mfrak(x,A)$,
or, in other words, that$\lambda^\Mfrak(x,A)$ is a threshold between survival and extinction in $A$ starting from $x$ and we can
alternatively write
 $\lambda^\Mfrak(x,A):=
\sup \{\lambda>0\colon \,
\mathbf{q}^\Mfrak(x,A|\lambda)=1 \}$.
\\
In the case when $\boldmu_\lambda \gepgf \boldmu_{\lambda^*}$, then 
by \cite[Theorem 4.1(i)]{cf:BZgerm}, we have that $\mathbf{q}^\Mfrak(x,A|\lambda^*)\le\mathbf{q}^\Mfrak(x,A|\lambda)$, which means that
$\lambda \mapsto \mathbf{q}(x,A|\lambda)$ is nonincreasing.
\end{proof}
As in the case of continuous-time BRWs, we define the global and local critical parameters for GMBRWs:
\begin{equation}\label{eq:criticalparameters}
	\begin{split}
		\lambda_w^\Mfrak(x)&:=\lambda^\Mfrak(x,X);\qquad 
		%\pr^{\delta_{x}}\left(\exists t\colon \eta_t=\mathbf{0}\right)<1\}\\
		\lambda_s^\Mfrak(x):=\lambda^\Mfrak(x, \{x\}).
		%\pr^{\delta_{x}}\left(\exists \bar t\colon \eta_t(x)=0,\,\forall t\ge\bar t\right)<1\},
	\end{split}
\end{equation}
Table \ref{tb:table3} summarizes the results of Proposition \ref{pro:criticalGM}. Note that the behaviour in $A$ (extinction or survival),
when $\lambda=\lambda(x,A)$ depends in general on the process. 
It is known that there is 
extinction in $x$ for $\lambda=\lambda_s(x)$, while there are examples where there is global survival at $\lambda_w(x)$ (see 
\cite[Theorem~4.7 and Example~3]{cf:BZ2} and \cite[Theorem 4.1]{cf:BZgerm}).

\begin{table}
	\begin{tabular}{ |c|c|}
		\hline 
		\rowcolor{lightgray}
		%Relations between c
		Parameters and probabilities  & Probabilities and parameters \\ \hline
		%&   \\  
%		$\lambda \le \lambda_s(x) \Longleftrightarrow {\mathbf{q}}(x,\{x\}|\lambda)= 1$ \\ 
%		%&    \\ 
%		\hline
%		$\lambda > \lambda_s(x) \Longleftrightarrow {\mathbf{q}}(x,\{x\}|\lambda)< 1$ \\ 
%		%&    \\ 
%		\hline
		$\lambda < \lambda(x,A) \Longrightarrow {\mathbf{q}}(x,A|\lambda)= 1$
		%, \quad
		& ${\mathbf{q}}(x,A|\lambda)= 1 \Longrightarrow \lambda \le \lambda(x,A)$   \\ 
		%&    \\ 
		\hline
		$\lambda > \lambda(x,A) \Longrightarrow {\mathbf{q}}(x,A|\lambda)< 1$
		%, \quad
		& ${\mathbf{q}}(x,A|\lambda)< 1 \Longrightarrow \lambda \ge \lambda(x,A)$ \\
		%&    \\ 
		\hline
	\end{tabular}
	\captionof{table}{
	%Relations between c
	Critical parameters and extinction probabilities.
	%; see \cite[Theorem~4.7 and Example~3]{cf:BZ2} and \cite[Theorem 4.1]{cf:BZgerm}.
	}	\label{tb:table3}
\end{table}

Henceforth, unless otherwise explicitly mentioned, we consider only regular, irreducible GMBRW.
For these GMBRW, the critical parameters do not depend on $x$ and we write $\lambda^\Mfrak(A)$ (or $\lambda(A)$ when there is no ambiguity on the process). Indeed, given $x,y\in X$, if the GMBRW is regular and irreducible, there is a
positive probability that an individual living in $x$ has at least one descendant in $y$. By markovianity, a
positive probability of survival in a set $A$ starting from $y$ implies the same starting from $x$.
Swapping the roles of $x$ and $y$, we obtain $\lambda^\Mfrak(x,A) = \lambda^\Mfrak(y,A)$.
Moreover, for all finite nonempty $A \subseteq X$,
$\lambda^\Mfrak(A)=\lambda_s^\Mfrak$.
  This follows from the fact that $\mathbf{q}^\Mfrak(x,A|\lambda)$  is constant for all finite nonempty $A$. 
  % Moreover, in this case, for every nonempty $B \subseteq X$ we have $\lambda_w \le \lambda(B) \le \lambda_s$.
  \\
  Note that if  $A\subseteq B$, then $A \stackrel{\boldnu}{\Rightarrow} B$ for all $\boldnu$,
and this implies that $\lambda^\Mfrak(A) \ge \lambda^\Mfrak(B)$ for all GMBRW $(X,\Mfrak)$.
Therefore in the set $\{\lambda^\Mfrak(A)\}_{A\subseteq X, A\neq\emptyset}$, the smallest critical value is $\lambda_w^\Mfrak$, while the maximum is  $\lambda_s^\Mfrak$.

We recall that
Definition \ref{def:equivalentsets} was given for single discrete-time BRWs, but it can be extended to families of BRWs, such as GMBRWs. Given $\Mfrak=\{\boldmu_\lambda\}_{\lambda>0}$, we say that $A \stackrel{\Mfrak}{\Rightarrow} B$ if  $A \stackrel{\boldmu_\lambda}{\Rightarrow} B$ for every $\lambda>0$.
Similarly, $A \stackrel{\Mfrak}{\nRightarrow} B$ means that there exists $\lambda$ such that $A \stackrel{\boldmu_\lambda}{\nRightarrow} B$.

In Theorem\ref{th:lambdas} we recalled the characterizations of $\lambda_s(x)$ for classical continuous-time BRWs.
Now we characterize $\lambda_s^\Mfrak(x)$ for GMBRWs and provide a lower bound for $\lambda^\Mfrak(x,A)$.

\begin{Proposition}
	\label{pro:criticalA}
	Let $(X, \Mfrak)$ be a GMBRW and $A \subseteq X$. 
	For every $\lambda>0$ denote by $m_{xy}^{(n)}(\lambda)$ the entries of the $n$th power matrix $M_\lambda^n$ where  $M_\lambda=(m_{xy}(\lambda))_{x,y \in X}$ is the first moment matrix of $(X, \boldmu_\lambda)$.
	Then
	\[
	 \begin{split}
  \lambda_s(x)&=  \sup\{\lambda\in\R\colon \sum_n m_{xx}^{(n)}(\lambda)<+\infty\}\\
	\lambda^\Mfrak(x,A)& \ge\sup\{\lambda\in\R\colon \sum_{n\in\mathbb{N}}
	 \sum_{y \in A} m_{xy}^{(n)}(\lambda)<+\infty\}.
	 \end{split}
	\]
	
\end{Proposition}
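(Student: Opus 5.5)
We prove the two claims separately. Throughout we use two facts from the literature on discrete-time BRWs. The first is the local survival criterion: for a single irreducible class, local survival at $x$ holds if and only if $\limsup_n \sqrt[n]{m^{(n)}_{xx}}>1$. The second is the first-moment bound: if $\sum_n \sum_{y\in A} m^{(n)}_{xy}<+\infty$, then there is extinction in $A$.

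\textbf{Lower bound for $\lambda^\Mfrak(x,A)$.} Fix $\lambda$ with $\sum_n\sum_{y\in A} m^{(n)}_{xy}(\lambda)<+\infty$. Then
\[
\mathbb{E}^x\Big[\sum_{n}\sum_{y\in A}\eta_n(y)\Big]=\sum_n\sum_{y\in A}m^{(n)}_{xy}(\lambda)<+\infty .
\]
So the total number of visits to $A$ is a.s. finite. This forces $\limsup_n\sum_{y\in A}\eta_n(y)=0$ a.s., because $\eta_n$ takes integer values. Hence $\mathbf{q}(x,A|\lambda)=1$.

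By Proposition~\ref{pro:criticalGM}, $\lambda^\Mfrak(x,A)=\sup\{\lambda\colon \mathbf{q}(x,A|\lambda)=1\}$. Every $\lambda$ in the set on the right of the claimed inequality therefore satisfies $\lambda\le\lambda^\Mfrak(x,A)$, and the inequality follows. We do not need monotonicity of the moment series in $\lambda$; we only need the inclusion of sets.

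\textbf{Equality for $\lambda_s$.} The inequality $\lambda_s(x)\ge\sup\{\lambda\colon\sum_n m^{(n)}_{xx}(\lambda)<\infty\}$ is the case $A=\{x\}$ above. For the reverse, fix $\lambda>\lambda_s(x)$. By Table~\ref{tb:table3}, $\mathbf{q}(x,x|\lambda)<1$, so there is local survival. By the local survival criterion (see \cite{cf:BZ2} and \cite{cf:BZgerm}), local survival at $x$ for a discrete-time BRW is equivalent to $\limsup_n\sqrt[n]{m^{(n)}_{xx}(\lambda)}>1$. This forces $\sum_n m^{(n)}_{xx}(\lambda)=+\infty$.

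Thus every $\lambda>\lambda_s(x)$ lies outside the set $\{\lambda\colon \sum_n m^{(n)}_{xx}(\lambda)<\infty\}$. Combined with the previous inequality, this shows the supremum equals $\lambda_s(x)$.

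If the set on the right is empty, or contains all $\lambda$, we read the supremum with the natural conventions (the paper's convention $\inf\emptyset=0$ and its counterpart). The same argument then applies unchanged.

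\textbf{Main obstacle.} The delicate step is the equivalence "local survival $\iff\limsup\sqrt[n]{m^{(n)}_{xx}}>1$". This is a known result, so we would cite it, but we should check that it applies for a general (non-Markov-in-$\lambda$) reproduction law $\mu_{x,\lambda}$. The paper's nondegeneracy assumption rules out the trivial case in which exactly one child is produced a.s.

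The proof of the survival direction, should we need to rederive it, goes by truncation to a finite irreducible subgraph containing $x$. Choose $n$ such that the truncated process restricted to $x$-returns at generation $n$ has mean $>1$. This yields a supercritical Galton–Watson process embedded in the BRW. Truncating the offspring law to make the means finite is the technical point.
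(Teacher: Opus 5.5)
Your first part, the lower bound for $\lambda^\Mfrak(x,A)$ via the expected number of visits to $A$, is correct and is exactly the paper's argument. The gap is in the equality for $\lambda_s$. Write $S:=\{\lambda\colon \sum_n m^{(n)}_{xx}(\lambda)<+\infty\}$. Showing that every $\lambda>\lambda_s(x)$ lies outside $S$ is just the contrapositive of ``$\lambda\in S\Rightarrow\lambda\le\lambda_s(x)$''. So it proves $\sup S\le\lambda_s(x)$ a second time, and it needs no survival criterion: it is your case $A=\{x\}$. The inequality actually missing, $\lambda_s(x)\le\sup S$, concerns the extinction region $\lambda<\lambda_s(x)$. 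You would have to show that local extinction at such $\lambda$, at least for $\lambda$ arbitrarily close to $\lambda_s(x)$, forces $\sum_n m^{(n)}_{xx}(\lambda)<+\infty$. There the criterion only gives $\limsup_n\sqrt[n]{m^{(n)}_{xx}(\lambda)}\le 1$, which does not imply summability, and germ-monotonicity does not force $M_\lambda$ to grow strictly in $\lambda$. Your ``main obstacle'' is therefore misplaced. The direction of the criterion you invoke is the trivial first-moment one. The missing inequality would need the nontrivial direction ($\limsup>1\Rightarrow$ local survival) plus some strict monotonicity in $\lambda$.

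Without such an extra ingredient the missing inequality is false, so no argument can close it in the stated generality. Take $X=\{x\}$ and $\mu_{x,\lambda}:=(1-p(\lambda))\delta_0+p(\lambda)\delta_2$, with $p$ nondecreasing, $p<\tfrac12$ on $(0,1)$, $p=\tfrac12$ on $[1,2]$, and $p>\tfrac12$ on $(2,+\infty)$. Then $G_{\boldmu_\lambda}(z)=1-p(\lambda)(1-z^2)$ is pointwise nonincreasing in $\lambda$, so this is an irreducible, nondegenerate, even pgf-monotone GMBRW. Since $m^{(n)}_{xx}(\lambda)=(2p(\lambda))^n$, we get $\sup S=1$. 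A Galton--Watson process with mean $2p(\lambda)$ survives iff $2p(\lambda)>1$, so $\lambda_s=2$. The equality does hold whenever $M_\lambda=\lambda K$, as for discrete-time counterparts of continuous-time BRWs (Theorem~\ref{th:lambdas}). There, for $\lambda'<\lambda<\lambda_s(x)$, the nontrivial direction of the criterion gives $\lambda\limsup_n\sqrt[n]{k^{(n)}_{xx}}\le 1$, hence $\lambda'\limsup_n\sqrt[n]{k^{(n)}_{xx}}<1$ and $\lambda'\in S$. For a general GMBRW, your criterion combined with Proposition~\ref{pro:criticalGM} yields $\lambda_s(x)=\sup\{\lambda\colon\limsup_n\sqrt[n]{m^{(n)}_{xx}(\lambda)}\le 1\}$, which can differ from the summability threshold. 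The paper's own proof also establishes only the inequality, by the same expected-number argument as yours. The first line of the statement should therefore be read as requiring an extra hypothesis of this kind.
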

\begin{proof}
	Observe that $\sum_{y \in A} m_{xy}^{(n)}(\lambda)$ represents the expected number of individuals alive at time $n$ in $A$, given that the process starts from a single particle at $x$. Consequently, 
	$\sum_{n\in\mathbb{N}} \sum_{y \in A} m_{xy}^{(n)}(\lambda)<+\infty
	$
	means that the expected total number of descendants that have ever lived in $A$ is finite. Hence, the total number of descendants that have ever lived in $A$ is a.s.~finite; thus $\mathbf{q}(x,A|\lambda)=1$ and, according to Table~\ref{tb:table3}, $\lambda \le \lambda(x,A)$.
\end{proof}

\subsection{Continuous-time BRWs and ageing BRWS}
\label{subsec:continuous}

The discrete-time counterpart of a continuous-time BRW is a natural example of
a regular GMBRW.
Indeed, if we compute the total number of offspring of a particle at $x$, and observe 
that each child is placed independently in $y$ with probability ${k_{xy}}/{\sum_{y \in X} k_{xy}}$, we have that
\begin{equation}\label{eq:mucontinuous}
	\mu_{x,\lambda}(f)= \frac{1}{\big (1+\lambda \sum_{y \in X} k_{xy} \big )} \frac{(\sum_{y \in X} f(y))!}{\prod_{y \in X} f(y)!} \prod_{y \in X} \Big ( \frac{\lambda k_{xy} }{1+\lambda \sum_{y \in X} k_{xy}}\Big )^{f(y)},
\end{equation}
and
\[
G_{\boldmu_\lambda}(\mathbf{z}|x)=\frac{1}{1+\lambda \sum_{y \in X} k_{xy}(1-\mathbf{z}(y))}
\]
for all $\mathbf{z}\in [0,1]^X$ and $x \in X$.
If $\lambda \ge \lambda^*$, then $G_{\boldmu_\lambda}(\mathbf{z})\le G_{\boldmu_\lambda^*}(\mathbf{z})$
for all $\mathbf z\in [0,1]^X$, thus $\boldmu_{\lambda}\gepgf\boldmu_{\lambda^*}$.
The regularity follows from the fact that the irreducible class of $[x]$ is the usual irreducible class induced by the matrix $K$, which does not depend on $\lambda>0$.

From  Proposition~\ref{pro:criticalA} we get a lower bound for $\lambda(x,A)$ in the case of the  continuous-time BRW.

\begin{Corollary}
	\label{cor:criticalA}
	Let $(X, K)$ be a continuous-time BRW and $A \subseteq X$. 
	Then $\lambda(x,A) \ge 1/\limsup_{n \in \mathbb{N}} \sqrt[n]{\sum_{y \in A} k_{xy}^{(n)}}$.
\end{Corollary}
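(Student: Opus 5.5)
The plan is to deduce the corollary directly from the lower bound in Proposition~\ref{pro:criticalA}, applied to the discrete-time counterpart of $(X,K)$. We saw in Section~\ref{subsec:continuous} that this counterpart is a regular GMBRW (indeed pgf-monotone) and that it has the same extinction probabilities as $(X,K)$. Hence $\lambda(x,A)$ for $(X,K)$ coincides with $\lambda^\Mfrak(x,A)$ for the counterpart. Its first moment matrix is $M_\lambda=\lambda K$. Therefore $m^{(n)}_{xy}(\lambda)=\lambda^n k^{(n)}_{xy}$, and
\[
\sum_{y\in A} m^{(n)}_{xy}(\lambda)=\lambda^n a_n, \qquad a_n:=\sum_{y\in A}k^{(n)}_{xy}.
\]
Proposition~\ref{pro:criticalA} then gives
\[
\lambda(x,A)\ge \sup\Big\{\lambda>0\colon \sum_n \lambda^n a_n<+\infty\Big\}.
\]

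Next, set $L:=\limsup_n \sqrt[n]{a_n}\in[0,+\infty]$. By the Cauchy--Hadamard root test, the power series $\sum_n a_n\lambda^n$ with nonnegative coefficients converges for every $0<\lambda<1/L$. It follows that the supremum above is at least $1/L$, and the stated bound $\lambda(x,A)\ge 1/L$ follows.

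A little bookkeeping is needed in the degenerate cases, using the conventions $1/0=+\infty$ and $1/\infty=0$:
\begin{itemize}
\item If $L=+\infty$, the bound is trivial.
\item If $L=0$, the series converges for all $\lambda>0$. Then $\mathbf{q}(x,A|\lambda)=1$ for every $\lambda$, so $\lambda(x,A)=\inf\emptyset$, and one should check that this is consistent with the claimed inequality.
\end{itemize}

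There is no substantial obstacle. The only points needing care are that the series in the proposition is indexed from $n=0$ or $n\ge1$, which does not affect convergence, and that the ``$\sup$ over $\lambda\in\R$'' in Proposition~\ref{pro:criticalA} is really taken over $\lambda>0$.
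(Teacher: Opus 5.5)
Your proposal is correct and is essentially the paper's own proof: pass to the discrete-time counterpart, apply Proposition~\ref{pro:criticalA} using $m^{(n)}_{xy}(\lambda)=\lambda^n k^{(n)}_{xy}$, and observe that $\sum_n \lambda^n\sum_{y\in A}k^{(n)}_{xy}<+\infty$ whenever $\lambda<1/\limsup_n\sqrt[n]{\sum_{y\in A}k^{(n)}_{xy}}$. The $L=0$ case you left open is settled by reading $\inf\emptyset=+\infty$, which the statement forces (take $A=\emptyset$) and which matches the $\sup$-form in Proposition~\ref{pro:criticalGM}, despite the paper's literal ``$\inf(\emptyset)=0$''; the paper's proof also passes over this point.
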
 		
\begin{proof}
	Since the extinction probabilities of the continuous-time process coincide with those of its discrete-time counterpart, we apply Proposition~\ref{pro:criticalA} to the latter. The claim follows from the identity 
	$m_{xy}^{(n)}(\lambda)=\lambda^n k_{xy}^{(n)}$ for all $n \in \mathbb{N}$ and $x,y \in X$.
	In particular, if 
	$
	\lambda < {1}/{\limsup_{n \in \mathbb{N}} \sqrt[n]{\sum_{y \in A} k_{xy}^{(n)}}}
	$,
	then 
	$
	\sum_{n\in\mathbb{N}} \lambda^n \sum_{y \in A} k_{xy}^{(n)} < +\infty$,
	which in turn implies $\lambda \le \lambda(x,A)$.
\end{proof}
A comparison with Theorems \ref{th:lambdas} and \ref{th:lambdaw} shows that Corollary \ref{cor:criticalA}, when applied to the
cases $A=\{x\}$ and $A=X$, identifies the best lower bound for $\lambda_s(x)$, while it misses the best one for
$\lambda_w$, when $\limsup_{n \in \mathbb{N}} \sqrt[n]{\sum_{y \in X} k_{xy}^{(n)}}>
\liminf_{n \in \mathbb{N}} \sqrt[n]{\sum_{y \in X} k_{xy}^{(n)}}$.

A more realistic continuous-time model is given by the \emph{ageing BRW}, in which an individual’s reproductive ability depends on its age (see, for instance, \cite{cf:BZageing}). Let $\mathcal{R}:=\{r_{xy}\}_{x,y \in X}$ be a family of nonnegative measurable functions such that $\sum_{y \in X} r_{xy} \in L^1_{\mathrm{loc}}([0,+\infty))$ for every $x \in X$. For each $x \in X$, we assume that the lifetime of a particle located at $x$ is governed by a law with cumulative distribution function $T_x$, where $\{T_x\}_{x \in X}$ is a collection of nondecreasing, right-continuous functions satisfying $T_x(0)=0$ and $\lim_{t \to +\infty} T_x(t)=1$.

The dynamics of the process are as follows. When a particle is born at site $x$ at time $\bar t$, a family of independent
inhomogeneous Poisson point process with intensities $\{\lambda r_{xy}(\cdot)\}_{y \in X}$ is activated. During the random time interval $[\bar t, \bar t+\hat t]$, where $\hat t$ denotes the random lifetime of the particle with cumulative distribution function $T_x$, at each arrival time of a Poisson point process with intensity $t \mapsto \lambda r_{xy}(t-\bar t)$ a new particle is placed at $y$. All Poisson point processes and lifetimes, depending on $x,y \in X$ and on the individual particle, are assumed to be independent. In general, this process is non-Markovian, unless the intensities are constant and lifetimes are exponentially distributed. Nonetheless, the associated discrete-time process describing the number and locations of the offspring at the end of an individual’s lifetime is a regular GMBRW.
It is possible to explicitly compute the measures of the family $\boldmu_\lambda$ as

\begin{equation}\label{eq:explicitmu}
	\mu_{x,\lambda}(f)=\int_0^\infty \prod_{y \in X} \Big (
	\frac{\exp \big (-\lambda \int_0^t r_{xy}(s) \diff s \big ) \big (\lambda \int_0^t r_{xy}(s) \diff s \big )^{f(y)}}{f(y)!}  \Big )\pr_{T_x}(\diff t)
\end{equation}
where $\pr_{T_x}$ is the law whose c.d.f.~is $T_x$.
The corresponding generating function is
\[
G_{\boldmu_\lambda}(\mathbf{z}|x)=
\int_0^\infty 
\exp \Big (-\lambda \int_0^t \sum_{y \in X} r_{xy}(s) (1-\mathbf{z}(y)) \diff s \Big )  \pr_{T_x}(\diff t).
\]

If 
$r_{xy}(t)=k_{xy}$ is a constant function and the lifetime is exponentially distributed with parameter $1$ (i.e.~$T_x(t):=\ident_{[0,+\infty)]}(t)(1- \exp(-t))$ then equation~\eqref{eq:explicitmu} becomes equation~\eqref{eq:mucontinuous} and we have the usual continuous-time BRW.

Since the discrete-time counterpart of an ageing BRW is a GMBRW, it is possible to define its critical parameters 
$\lambda(x,A)$, for which the lower bound in Corollary \ref{cor:criticalA} hold, with
\[
k_{xy}=\int_0^\infty\int_0^t r_{xy}(s) \diff s \pr_{T_x}(\diff t).
\]
This shows that dealing with GMBRWs makes it possible to derive results for a broader class of processes, 
including some non-Markovian processes.

\section{Critical parameters for modified BRWs and GMBRWs}
\label{sec:survivalprob}

In this section, we examine how modifications 
of the reproduction laws in some vertices
may 
affect extinction probabilities and critical parameters.
Since 
survival on a fixed set for a continuous-time BRW is equivalent to 
survival on the same set for its discrete-time counterpart, it is 
natural to use results from the discrete-time case to infer 
corresponding results for continuous-time BRWs.

The main technical tool of this section is Theorem~\ref{cor:equivalence}, from which we derive Theorem~\ref{th:modifiedBRW}, Proposition~\ref{pro:pureweak-nonstrong} and
Theorem~\ref{th:mainmod}.
Given  two BRWs
$(X,\boldmu)$ and $(X,\boldnu)$, we
denote by ${\mathbf{q}}^{\boldmu}$ and ${\mathbf{q}}^{\boldnu}$ their respective extinction probability vectors. Similarly, we write $\stackrel{\boldmu}{\Rightarrow}$ and $\stackrel{\boldnu}{\Rightarrow}$ for the survival implications introduced in Section~\ref{subsec:equivalentsets}. The following result is a substantial generalization of \cite[Theorem~3]{cf:BZ2025} and serves as a cornerstone for the remainder of the paper.

\begin{Theorem}\label{th:modifiedBRW}
	Let $(X,\boldmu)$ and $(X,\boldnu)$ be two BRWs and define  
	$\Delta:=\{
	x \in X \colon \mu_x\neq \nu_x \}$. 
	If $A \subseteq X$ is such that $\Delta \stackrel{\boldmu}{\Rightarrow} A$ and $\Delta \stackrel{\boldnu}{\Rightarrow} A$, then for all $B\subseteq X$ 
	\begin{equation}\label{eq:thm3.1}
		{\mathbf{q}}^\boldmu(A) \leq {\mathbf{q}}^\boldmu(B) \Longleftrightarrow
		{\mathbf{q}}^\boldnu(A) \leq {\mathbf{q}}^\boldnu(B).
	\end{equation}
	If, in addition,  $\Delta \stackrel{\boldmu}{\Rightarrow} B$ and $\Delta \stackrel{\boldnu}{\Rightarrow} B$, then
	\begin{equation}\label{eq:thm3.2}
		\begin{cases}
			{\mathbf{q}}^\boldmu(A) = {\mathbf{q}}^\boldmu(B) &\Longleftrightarrow
			{\mathbf{q}}^\boldnu(A) = {\mathbf{q}}^\boldnu(B),\\    
			{\mathbf{q}}^\boldmu(A) < {\mathbf{q}}^\boldmu(B) &\Longleftrightarrow
			{\mathbf{q}}^\boldnu(A) < {\mathbf{q}}^\boldnu(B),\\    
			{\mathbf{q}}^\boldmu(A) > {\mathbf{q}}^\boldmu(B) &\Longleftrightarrow
			{\mathbf{q}}^\boldnu(A) > {\mathbf{q}}^\boldnu(B);
		\end{cases}
	\end{equation}
	moreover $\mathbf{q}^{\boldmu}(A)$ and $\mathbf{q}^{\boldmu}(B)$ are not comparable if and only if $\mathbf{q}^{\boldnu}(A)$ and $\mathbf{q}^{\boldnu}(B)$ are not comparable.
	
	Finally, if $(X, \boldmu)$ and $(X,\boldnu)$ are both irreducible and $\Delta$ is finite, then  
	\eqref{eq:thm3.2} holds for all nonempty subsets $A, B \subseteq X$.
\end{Theorem}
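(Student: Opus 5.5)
By Theorem~\ref{cor:equivalence}, the statement ${\mathbf{q}}(A)\le{\mathbf{q}}(B)$ is the same as $B\Rightarrow A$. Its negation is $B\nRightarrow A$, which by item (3) of that theorem means: starting from some $x\notin A$, there is positive probability of surviving in $B$ without ever visiting $A$. So \eqref{eq:thm3.1} reduces to showing that
\[
B \stackrel{\boldmu}{\nRightarrow} A \iff B \stackrel{\boldnu}{\nRightarrow} A .
\]
By symmetry between $\boldmu$ and $\boldnu$ (the hypotheses are symmetric), it is enough to prove one direction.

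Assume $B\stackrel{\boldmu}{\nRightarrow} A$, and fix $x$ such that, with positive $\pr^x_\boldmu$-probability, the process survives in $B$ and never visits $A$. The key claim is that on this event, almost surely, the process also never visits $\Delta$ after some finite time, i.e.\ it goes extinct in $\Delta$.

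To see this, use $\Delta\stackrel{\boldmu}{\Rightarrow}A$. Survival in $\Delta$ implies survival in $A$ almost surely, so on the event that $A$ is never visited, survival in $\Delta$ has probability zero. Hence the event $\mathcal S(B)\cap\{A \text{ never visited}\}$ is, up to a null set, contained in $\mathcal S(B)\cap\mathcal E(\Delta)$, and the latter has positive probability.

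The next step is to refine this so that the particles never visit $\Delta$ at all. On $\mathcal E(\Delta)$ there is a finite time $n$ after which no particle lies in $\Delta$. By conditioning on the configuration at a large time $n$, and using that the configuration is a finite collection of particles, one finds $n$ and a site $y$ occupied at time $n$ such that, starting from one particle at $y$, there is positive probability of surviving in $B$ while never visiting $A\cup\Delta$. (One may have to pick a descendant particle, using the branching property: at least one particle's progeny must carry the survival.)

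Then compare the two processes. Started from $y$, consider the genealogical trees that never enter $\Delta$. Their law is the same under $\boldmu$ and $\boldnu$, since reproduction laws outside $\Delta$ coincide. This can be made precise by a coupling in which particles outside $\Delta$ use the same random choices under both laws. It follows that, under $\boldnu$ started from $y$, there is positive probability of surviving in $B$ while never visiting $A$. Therefore $B\stackrel{\boldnu}{\nRightarrow}A$ by item (3) of Theorem~\ref{cor:equivalence}. This proves \eqref{eq:thm3.1}.

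\textbf{Main obstacle.} The delicate step is the reduction from ``eventually avoids $\Delta$'' to ``avoids $\Delta$ from time $0$, starting at a suitable $y$''. I would handle it with a Borel--Cantelli/Lévy 0--1 style argument applied to the finitely many particles alive at time $n$.

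For \eqref{eq:thm3.2}, apply \eqref{eq:thm3.1} with the roles of $A$ and $B$ swapped, which is allowed under the extra hypotheses. This gives both equivalences for ${\mathbf{q}}(A)\le{\mathbf{q}}(B)$ and for ${\mathbf{q}}(B)\le{\mathbf{q}}(A)$. Equality, strict inequality in either direction, and noncomparability are Boolean combinations of these two relations, so they are all preserved.

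For the final claim, assume both BRWs are irreducible and $\Delta$ is finite. If $\Delta$ is empty the claim is trivial. Otherwise, Remark~\ref{rem:1} gives $\Delta\Rightarrow C$ for every nonempty $C$, under both $\boldmu$ and $\boldnu$. So the hypotheses hold for all nonempty $A,B$, and \eqref{eq:thm3.2} follows.
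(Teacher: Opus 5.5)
Your proof is correct. For \eqref{eq:thm3.2} and for the irreducible, finite-$\Delta$ case it matches the paper exactly; the difference lies in how you prove \eqref{eq:thm3.1}.

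The paper does not argue with trajectories there. Since $\Delta\Rightarrow A$ holds under both laws, Remark~\ref{rem:1} gives $\mathbf{q}^\boldmu(A)=\mathbf{q}^\boldmu(A\cup\Delta)$ and $\mathbf{q}^\boldnu(A)=\mathbf{q}^\boldnu(A\cup\Delta)$. So $A$ may be replaced by $A\cup\Delta\supseteq\Delta$, and that case is imported from \cite[Theorem~3]{cf:BZ2025}.

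You instead give a self-contained argument. You use $\Delta\stackrel{\boldmu}{\Rightarrow}A$ to force extinction in $\Delta$ on the event of surviving in $B$ while avoiding $A$. You then restart after the last visit to $\Delta$ and couple the two laws on genealogies that never enter $\Delta$. In effect you reprove the imported case $A\supseteq\Delta$, which makes your version independent of the citation.

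The paper's reduction would also remove what you call the main obstacle. By Theorem~\ref{cor:equivalence}(1) and $\mathbf{q}^\boldmu(A)=\mathbf{q}^\boldmu(A\cup\Delta)$, you have $B\stackrel{\boldmu}{\nRightarrow}A$ if and only if $B\stackrel{\boldmu}{\nRightarrow}A\cup\Delta$. Theorem~\ref{cor:equivalence}(3), applied to the pair $(B,A\cup\Delta)$, then directly gives some $y\notin A\cup\Delta$ from which, with positive probability, the $\boldmu$-process survives in $B$ without ever visiting $A\cup\Delta$. Your coupling then finishes the proof with no time-$n$ restart.

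If you keep your own route, no Borel--Cantelli or 0--1 law is needed. By a countable union, pick a deterministic $n$ such that $\mathcal{S}(B)\cap\{A\text{ never visited}\}\cap\{\sum_{z\in\Delta}\eta_m(z)=0\ \forall m\ge n\}$ has positive probability. Include $m=n$, so the particles alive at time $n$ lie outside $A\cup\Delta$. The Markov property at time $n$ and a union bound over the finitely many particles alive then yield the required $y$.
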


\begin{proof}%[Proof of Theorem~\ref{th:modifiedBRW}]
	By Remark~\ref{rem:1}, ${\mathbf{q}}^\boldmu(A)={\mathbf{q}}^\boldmu(A \cup \Delta)$ and
	${\mathbf{q}}^\boldnu(A)={\mathbf{q}}^\boldnu(A \cup \Delta)$; hence it suffices to establish
	 	\eqref{eq:thm3.1} in the case $A \supseteq \Delta$, which was carried out in
	\cite[Theorem 3]{cf:BZ2025}. This completes the proof of  
	\eqref{eq:thm3.1}.
	
	We turn to the first line of  
	\eqref{eq:thm3.2}, namely
	\begin{equation}
		\label{eq:thm3.2bis}
		{\mathbf{q}}^\boldmu(A) = {\mathbf{q}}^\boldmu(B) \Longleftrightarrow
		{\mathbf{q}}^\boldnu(A) = {\mathbf{q}}^\boldnu(B).
	\end{equation}
	This equivalence is obtained by applying  \eqref{eq:thm3.1} twice,
	the second time with $A$ and $B$ interchanged. The remaining lines of
	 \eqref{eq:thm3.2} then follow easily from  \eqref{eq:thm3.2bis}
	in conjunction with  \eqref{eq:thm3.1}. Indeed, we have proven that
	\[
	\begin{cases}
		{\mathbf{q}}^\boldmu(A) \leq {\mathbf{q}}^\boldmu(B)\\
		{\mathbf{q}}^\boldmu(A) \neq {\mathbf{q}}^\boldmu(B)
	\end{cases}
	\Longleftrightarrow \ \
	\begin{cases}
		{\mathbf{q}}^\boldnu(A) \leq {\mathbf{q}}^\boldnu(B)\\
		{\mathbf{q}}^\boldnu(A) \neq {\mathbf{q}}^\boldnu(B)
	\end{cases}
	\]
	which is precisely the second line of  \eqref{eq:thm3.2}, while the third line
	is obtained by interchanging the roles of $A$ and $B$. Finally,
	 \eqref{eq:thm3.2} yields that $\mathbf{q}^{\boldmu}(A)$ and
	$\mathbf{q}^{\boldmu}(B)$ are not comparable if and only if $\mathbf{q}^{\boldnu}(A)$
	and $\mathbf{q}^{\boldnu}(B)$ are not comparable.
	
	By Remark~\ref{rem:1}, if both $(X, \boldmu)$ and $(X,\boldnu)$ are irreducible and
	$\Delta$ is finite, then $\Delta \stackrel{\boldmu}{\Rightarrow} A$,
	$\Delta \stackrel{\boldmu}{\Rightarrow} B$, $\Delta \stackrel{\boldnu}{\Rightarrow} A$, and
	$\Delta \stackrel{\boldnu}{\Rightarrow} B$ for all nonempty subsets $A, B$; consequently,
	 \eqref{eq:thm3.2} holds for all nonempty subsets $A, B \subseteq X$.
\end{proof}

While Theorem \ref{th:modifiedBRW} deals with the modification of a single BRW, one can consider modifications
of families of BRWs. Indeed, given two irreducible GMBRWs $(X,\Mfrak)$ and $(X,\Nfrak)$, 
we denote by $\Delta_{\Mfrak, \Nfrak}:=\{x \in X \colon \mu_{x,\lambda} \neq \nu_{x, \lambda}, \, \textrm{ for some } \lambda >0\}$ the set where $\Mfrak$ and $\Nfrak$ are different.
When $\Delta_{\Mfrak, \Nfrak}$ is finite, we say that $\Nfrak$ is a \emph{local modification} of $\Mfrak$ and write
$\Mfrak \sim \Nfrak$.
It is easy to see that the  relation $\sim$ is an equivalence relation on the family of irreducible GMBRWs on $X$.
We denote by $[\Mfrak]$ the equivalence class of $\Mfrak$.
\\
We now extend Theorem \ref{th:modifiedBRW} to modifications of GMBRWs.

\begin{Theorem}
	\label{th:modifiedBRW-CT}
	Let $(X, \Mfrak)$ and $(X,\Nfrak)$ be two irreducible GMBRWs such that 
	$\Mfrak \sim\Nfrak$; 
	 then for all nonempty subsets $A,B \subseteq X$ and for all $\lambda>0$ we have
	%\begin{equation}\label{eq:thm3.2}
	\[
	\begin{cases}
		{\mathbf{q}}^{\Mfrak}(A|\lambda) = {\mathbf{q}}^{\Mfrak}(B|\lambda) &\Longleftrightarrow
		{\mathbf{q}}^{\Nfrak}(A|\lambda) = {\mathbf{q}}^{\Nfrak}(B|\lambda),\\    
		{\mathbf{q}}^{\Mfrak}(A|\lambda) < {\mathbf{q}}^{\Mfrak}(B|\lambda) &\Longleftrightarrow
		{\mathbf{q}}^{\Nfrak}(A|\lambda) < {\mathbf{q}}^{\Nfrak}(B|\lambda),\\    
		{\mathbf{q}}^{\Mfrak}(A|\lambda) > {\mathbf{q}}^{\Mfrak}(B|\lambda) &\Longleftrightarrow
		{\mathbf{q}}^{\Nfrak}(A|\lambda) > {\mathbf{q}}^{\Nfrak}(B|\lambda),
	\end{cases}
	\]
	%\end{equation}
	moreover $\mathbf{q}^{\Mfrak}(A|\lambda)$ and $\mathbf{q}^{\Mfrak}(B|\lambda)$ are not comparable if and only if $\mathbf{q}^{\Nfrak}(A|\lambda)$ and $\mathbf{q}^{\Nfrak}(B|\lambda)$ are not comparable.
\end{Theorem}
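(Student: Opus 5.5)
The plan is to reduce the statement to the last part of Theorem~\ref{th:modifiedBRW} by fixing $\lambda$. Fix $\lambda>0$ and consider the two BRWs $(X,\boldmu_\lambda)$ and $(X,\boldnu_\lambda)$, where $\boldmu_\lambda$ and $\boldnu_\lambda$ are the $\lambda$-members of $\Mfrak$ and $\Nfrak$. Define
\[
\Delta_\lambda:=\{x\in X\colon \mu_{x,\lambda}\neq\nu_{x,\lambda}\}.
\]
By definition of $\Delta_{\Mfrak,\Nfrak}$ we have $\Delta_\lambda\subseteq\Delta_{\Mfrak,\Nfrak}$. The assumption $\Mfrak\sim\Nfrak$ says that $\Delta_{\Mfrak,\Nfrak}$ is finite, so $\Delta_\lambda$ is finite for every $\lambda$.

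Since both GMBRWs are irreducible, Definition~\ref{def:onepardependentfamily} gives that $(X,\boldmu_\lambda)$ and $(X,\boldnu_\lambda)$ are irreducible BRWs for every $\lambda>0$. We may then apply the final assertion of Theorem~\ref{th:modifiedBRW} to the pair $(X,\boldmu_\lambda)$, $(X,\boldnu_\lambda)$, whose difference set is the finite set $\Delta_\lambda$. It yields \eqref{eq:thm3.2} for all nonempty $A,B\subseteq X$, together with the equivalence of non-comparability. Since $\mathbf{q}^{\Mfrak}(\cdot|\lambda)=\mathbf{q}^{\boldmu_\lambda}(\cdot)$ and $\mathbf{q}^{\Nfrak}(\cdot|\lambda)=\mathbf{q}^{\boldnu_\lambda}(\cdot)$, this is exactly the claim for this $\lambda$. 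As $\lambda$ was arbitrary, the statement follows.

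The one point needing care is the case $\Delta_\lambda=\emptyset$. Here the two BRWs coincide and the statement is trivial. It is also covered formally: Remark~\ref{rem:1} gives $\Delta\Rightarrow A$ for a finite nonempty $\Delta$, and the empty set trivially implies anything, because $\mathcal S(\emptyset)$ is empty. There is no real obstacle beyond checking that irreducibility and finiteness transfer from the families to each fixed $\lambda$.
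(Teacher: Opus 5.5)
Your proposal is correct and takes the route the paper intends. The paper gives no separate proof: it presents this result as the fixed-$\lambda$ specialization of the final assertion of Theorem~\ref{th:modifiedBRW}, applied to the irreducible BRWs $(X,\boldmu_\lambda)$ and $(X,\boldnu_\lambda)$, whose difference set lies inside the finite set $\Delta_{\Mfrak,\Nfrak}$. Your check of the empty-difference case (where $\mathcal S(\emptyset)=\emptyset$ makes the implications trivial) is a harmless detail that the paper leaves implicit.
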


In the results of this section we assume 
that the following assumption holds.
%one of the following.
\begin{Assumption}
	\label{assump:1}
Let $(X,\Mfrak)$, $(X,\Nfrak)$ be two irreducible GMBRWs  and let $A, B \subseteq X$
	two nonempty subsets    such that  $\Delta_{\Mfrak, \Nfrak} \stackrel{\Mfrak}{\Rightarrow} A$, $\Delta_{\Mfrak, \Nfrak} \stackrel{\Mfrak}{\Rightarrow} B$, $\Delta_{\Mfrak, \Nfrak} \stackrel{\Nfrak}{\Rightarrow} A$ and $\Delta_{\Mfrak, \Nfrak} \stackrel{\Nfrak}{\Rightarrow} B$.
\end{Assumption}

Examples~\ref{ex:ass1} and~\ref{ex:ass2} provide natural settings in which 
Assumption~\ref{assump:1} is satisfied. In particular, the case of 
Example~\ref{ex:ass1} will be used frequently in what follows.
\begin{Example}\label{ex:ass1}
	Consider two irreducible GMBRWs $(X,\Mfrak)$ and $(X,\Nfrak)$ such that  $\Mfrak \sim \Nfrak$.
	Then Assumption \ref{assump:1} holds for all couples of nonempty sets $A, B \subseteq X$.
	Indeed, by Remark~\ref{rem:1}, if $(X,\Mfrak)$ is an irreducible GMBRW and $\Delta\subseteq X$ is finite and nonempty,
	then $\Delta \stackrel{\boldmu_\lambda}{\Rightarrow} A$ for all $A\subseteq X$, $A\neq\emptyset$, $\lambda>0$. 
	%Consequently, if $\Mfrak \calR \Nfrak$, then $\Delta_{\Mfrak,\Nfrak} \Rightarrow A$ for every nonempty subset $A \subseteq X$. 
\end{Example}
\begin{Example}\label{ex:ass2}
	Consider two irreducible GMBRWs $(X,\Mfrak)$ and $(X,\Nfrak)$ such that $A:=\Delta_{\Mfrak,\Nfrak}\neq\emptyset$.
	Take $B\subseteq X$ satisfying $A\subseteq B$.
	Then Assumption \ref{assump:1} holds.
\end{Example}

Theorem \ref{th:modifiedBRW}, applied to a GMBRW, leads to Proposition~\ref{pro:pureweak-nonstrong} and Theorem~\ref{th:mainmod}, which describe how 
modifications of the GMBRW affect the critical parameters.
%phase diagram of a continuous-time BRW 
These results extend \cite[Corollary 2]{cf:BZ2025}. 
To improve readability, henceforth we often use the notation $\lambda(\cdot)$ and  $\lambda^*(\cdot)$
instead of $\lambda^\Mfrak(\cdot)$ and  $\lambda^\Nfrak(\cdot)$.
 \begin{Proposition}\label{pro:pureweak-nonstrong}
	Let $(X,\Mfrak)$ and $(X,\Nfrak)$ be two irreducible GMBRWs and $A, B \subseteq X$ satisfying  Assumption~\ref{assump:1}. Denote by $\lambda(\cdot)$ and $\lambda^*(\cdot)$ the critical parameters of $(X,\Mfrak)$ and $(X,\Nfrak)$ respectively.
	Then the following are equivalent:
	\begin{enumerate}
		\item $\min(\lambda^{{*}}(A), \lambda^{{*}}(B)) < \min(\lambda(A), \lambda(B)) $;
		\item $\max(\lambda^{{*}}(A), \lambda^{{*}}(B)) < \min( \lambda(A), \lambda(B))$;
		\item  $\lambda^{{*}}(A)=\lambda^{{*}}(B) < \min( \lambda(A) ,  \lambda(B))$.
	\end{enumerate}
\end{Proposition}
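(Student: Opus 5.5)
The implications (3)$\Rightarrow$(2)$\Rightarrow$(1) are immediate: under (3) the maximum and the minimum of $\lambda^*(A),\lambda^*(B)$ coincide and are below $\min(\lambda(A),\lambda(B))$, and the maximum always dominates the minimum. So the real content is (1)$\Rightarrow$(3). The plan is to exploit Theorem~\ref{th:modifiedBRW}, namely \eqref{eq:thm3.2}, which Assumption~\ref{assump:1} allows us to apply for every fixed $\lambda>0$ to the pair $\boldmu_\lambda$, $\boldnu_\lambda$, whose difference set is contained in $\Delta_{\Mfrak,\Nfrak}$. Strictly, $\Delta$ in Theorem~\ref{th:modifiedBRW} is the set where $\mu_{x,\lambda}\neq\nu_{x,\lambda}$ for this particular $\lambda$. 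It is a subset of $\Delta_{\Mfrak,\Nfrak}$, and survival in a subset implies survival in the superset; hence $\Delta\Rightarrow\Delta_{\Mfrak,\Nfrak}\Rightarrow A,B$ under both laws.

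Assume (1), and pick $\lambda$ with $\min(\lambda^*(A),\lambda^*(B))<\lambda<\min(\lambda(A),\lambda(B))$. Since $\lambda<\lambda(A)$ and $\lambda<\lambda(B)$, Table~\ref{tb:table3} gives $\mathbf{q}^{\Mfrak}(A|\lambda)=\mathbf{q}^{\Mfrak}(B|\lambda)=\mathbf{1}$. By the first line of \eqref{eq:thm3.2} we obtain $\mathbf{q}^{\Nfrak}(A|\lambda)=\mathbf{q}^{\Nfrak}(B|\lambda)$. Without loss of generality $\lambda^*(A)\le\lambda^*(B)$, so $\lambda>\lambda^*(A)$ and hence $\mathbf{q}^{\Nfrak}(A|\lambda)<\mathbf{1}$ (irreducibility makes survival independent of the starting point). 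Thus $\mathbf{q}^{\Nfrak}(B|\lambda)<\mathbf{1}$, so $\lambda\ge\lambda^*(B)$.

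This holds for every $\lambda$ in the open interval $(\lambda^*(A),\min(\lambda(A),\lambda(B)))$, which is nonempty by (1). Letting $\lambda\downarrow\lambda^*(A)$ gives $\lambda^*(B)\le\lambda^*(A)$, hence equality. Their common value is strictly less than $\min(\lambda(A),\lambda(B))$ by (1), which is (3).

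The only delicate point is checking the hypotheses of Theorem~\ref{th:modifiedBRW} for each fixed $\lambda$: Assumption~\ref{assump:1} is stated with $\Delta_{\Mfrak,\Nfrak}$ and implications for all $\lambda$, and this reduces to the monotonicity remark above. Everything else is bookkeeping with Table~\ref{tb:table3}.
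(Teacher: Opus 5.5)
Your proof is correct and follows the paper's argument. You pick $\lambda$ strictly between $\min(\lambda^*(A),\lambda^*(B))$ and $\min(\lambda(A),\lambda(B))$, transfer $\mathbf{q}^{\Mfrak}(A|\lambda)=\mathbf{q}^{\Mfrak}(B|\lambda)=\mathbf{1}$ to $\Nfrak$ via the first line of \eqref{eq:thm3.2}, and deduce $\lambda^*(B)\le\lambda$ for all such $\lambda$. The only difference is that the paper first reduces to $\Delta_{\Mfrak,\Nfrak}\subseteq A,B$ via Remark~\ref{rem:1}, whereas you check directly that the per-$\lambda$ difference set lies in $\Delta_{\Mfrak,\Nfrak}$, so Theorem~\ref{th:modifiedBRW} applies at each fixed $\lambda$; the paper leaves that point implicit.
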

\begin{proof}%[Proof of Corollary~\ref{cor:pureweak-nonstrong}]
	%We start by proving the equivalence $(1) \Longleftrightarrow (3)$. 
	By Remark~\ref{rem:1},
	${\mathbf{q}}^\Mfrak(A|\lambda)={\mathbf{q}}^\Mfrak(A \cup \Delta_{\Mfrak, \Nfrak}|\lambda)$,
	${\mathbf{q}}^{\Nfrak}(A|\lambda)={\mathbf{q}}^{\Nfrak}(A \cup \Delta_{\Mfrak, \Nfrak}|\lambda)$,
	${\mathbf{q}}^\Mfrak(B|\lambda)={\mathbf{q}}^\Mfrak (B \cup \Delta_{\Mfrak, \Nfrak}|\lambda)$, and
	${\mathbf{q}}^{\Nfrak}(B|\lambda)={\mathbf{q}}^{\Nfrak}(B \cup \Delta_{\Mfrak, \Nfrak}|\lambda)$
	for all $\lambda>0$; consequently,
	$\lambda(A)=\lambda(A \cup \Delta_{\Mfrak, \Nfrak})$,
	$\lambda^*(A)=\lambda^*(A \cup \Delta_{\Mfrak, \Nfrak})$,
	$\lambda(B)=\lambda(B \cup \Delta_{\Mfrak, \Nfrak})$, and
	$\lambda^*(B)=\lambda^*(B \cup \Delta_{\Mfrak, \Nfrak})$.
	\\	
	Therefore, without loss of generality, we can assume that
	$\Delta_{\Mfrak, \Nfrak} \subseteq A, B$. The implications
	$(3) \Longrightarrow (2) \Longrightarrow (1)$ are immediate.
	We prove $(1) \Longrightarrow (3)$.
	Suppose that $\lambda^{{*}}(A) = \min(\lambda^{{*}}(A), \lambda^{{*}}(B))
	< \min(\lambda(A), \lambda(B))$, and pick
	$\lambda \in \big(\lambda^{{*}}(A),\, \min(\lambda(A), \lambda(B))\big)$.
	For such $\lambda$, we have $\mathbf{q}^{\Mfrak}(A|\lambda)=\mathbf{q}^{\Mfrak}(B|\lambda)=\mathbf{1}$;
	Theorem~\ref{th:modifiedBRW} then gives
	$\mathbf{q}^{\Nfrak}(B|\lambda)=\mathbf{q}^{\Nfrak}(A|\lambda)<\mathbf{1}$,
	where the strict inequality follows from $\lambda > \lambda^{{*}}(A)$.
	Since $\mathbf{q}^{\Nfrak}(B|\lambda)<\mathbf{1}$ implies $\lambda \ge \lambda^{{*}}(B)$,
	we deduce $\lambda^{{*}}(B) \le \lambda$ for all $\lambda^{{*}}(A) <\lambda< \min(\lambda(A),  \lambda(B))$.
	Combined with $\lambda^{{*}}(A) = \min(\lambda^{{*}}(A), \lambda^{{*}}(B))$,
	this yields $\lambda^{{*}}(A)=\lambda^{{*}}(B)$.
	The case $\lambda^{{*}}(B)=\min(\lambda^{{*}}(A), \lambda^{{*}}(B))$ is
	entirely analogous, with the roles of $A$ and $B$ interchanged.
\end{proof}

\begin{Theorem}\label{th:mainmod}
	Let 
	$(X,\Mfrak)$ be an irreducible GMBRW, with critical parameters denoted by $\lambda(\cdot)$.
	Let $A, B \subseteq X$ be such that $\lambda(A)<\lambda(B)$.
	Suppose that $(X,\Nfrak)$ is another irreducible GMBRW, with critical parameters denoted by $\lambda^*(\cdot)$, and that
	Assumption~\ref{assump:1} holds.
	Then
	\begin{enumerate}
		\item$\lambda^*(A)\le\lambda(A)$ and  $\lambda^*(A)\le\lambda^*(B)$.
		\item Either $\lambda^*(A)=\lambda^*(B)\le\lambda(A)$ or $\lambda^*(A)=\lambda(A)<\lambda^*(B)$.
	\end{enumerate}
\end{Theorem}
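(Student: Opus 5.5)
The plan is to work with extinction vectors at fixed $\lambda$ and transfer order relations through Theorem~\ref{th:modifiedBRW} (applied to $\boldmu_\lambda,\boldnu_\lambda$). As in the proof of Proposition~\ref{pro:pureweak-nonstrong}, Remark~\ref{rem:1} lets us assume $\Delta_{\Mfrak,\Nfrak}\subseteq A\cap B$ without changing any of the four critical values. The basic observation is the following. For $\lambda\in(\lambda(A),\lambda(B))$ we have $\mathbf{q}^\Mfrak(A|\lambda)<\mathbf 1=\mathbf{q}^\Mfrak(B|\lambda)$, so by \eqref{eq:thm3.2} we get $\mathbf{q}^\Nfrak(A|\lambda)<\mathbf{q}^\Nfrak(B|\lambda)\le\mathbf 1$, and in particular $\mathbf{q}^\Nfrak(x,A|\lambda)<1$ for some $x$. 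By irreducibility and Table~\ref{tb:table3}, this gives $\lambda\ge\lambda^*(A)$. Letting $\lambda\downarrow\lambda(A)$ yields $\lambda^*(A)\le\lambda(A)$.

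Next I would show $\lambda^*(A)\le\lambda^*(B)$. Suppose instead $\lambda^*(B)<\lambda^*(A)$. Since $\lambda^*(A)\le\lambda(A)<\lambda(B)$, we get $\min(\lambda^*(A),\lambda^*(B))=\lambda^*(B)<\min(\lambda(A),\lambda(B))$. Proposition~\ref{pro:pureweak-nonstrong}(1)$\Rightarrow$(3) then forces $\lambda^*(A)=\lambda^*(B)$, a contradiction. This proves (1).

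For (2), I would split into cases according to the sign of $\lambda^*(A)-\lambda(A)$.

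\emph{Case $\lambda^*(A)<\lambda(A)$.} Then $\min(\lambda^*(A),\lambda^*(B))=\lambda^*(A)<\lambda(A)=\min(\lambda(A),\lambda(B))$, so Proposition~\ref{pro:pureweak-nonstrong} gives $\lambda^*(A)=\lambda^*(B)<\lambda(A)$, which is the first alternative.

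\emph{Case $\lambda^*(A)=\lambda(A)$.} By (1), $\lambda^*(B)\ge\lambda(A)$. If equality holds, we are in the first alternative, since $\lambda^*(A)=\lambda^*(B)\le\lambda(A)$. Otherwise $\lambda^*(A)=\lambda(A)<\lambda^*(B)$, which is the second alternative.

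The only delicate point is the first paragraph: the transfer gives merely $\mathbf{q}^\Nfrak(A|\lambda)<\mathbf{q}^\Nfrak(B|\lambda)$, not survival in $B$. What we need from it is only that some coordinate of $\mathbf{q}^\Nfrak(A|\lambda)$ is $<1$, and then irreducibility of $\Nfrak$ makes the critical value independent of $x$. I would also check that $\lambda(A)<\lambda(B)$ makes the interval nonempty, and that Proposition~\ref{pro:pureweak-nonstrong} is applied under exactly Assumption~\ref{assump:1}, which is given.
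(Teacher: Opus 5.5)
Your proof is correct, and it takes a shorter route than the paper.

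\emph{Your route.} You obtain $\lambda^*(A)\le\lambda(A)$ in one step. At each $\lambda\in(\lambda(A),\lambda(B))$ you apply the strict-order line of \eqref{eq:thm3.2} and read off survival in $A$ for $\Nfrak$. The hypotheses are met because the set where $\boldmu_\lambda$ and $\boldnu_\lambda$ differ is contained in $\Delta_{\Mfrak,\Nfrak}$, so they follow from Assumption~\ref{assump:1}. A single appeal to Proposition~\ref{pro:pureweak-nonstrong} then excludes $\lambda^*(B)<\lambda^*(A)$. Part (2) follows from a case split on $\lambda^*(A)$ versus $\lambda(A)$.

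\emph{The paper's route.} It argues by contradiction throughout.
\begin{enumerate}
\item It first derives $\min(\lambda^*(A),\lambda^*(B))\le\lambda(A)$ from Proposition~\ref{pro:pureweak-nonstrong}, with the roles of $\Mfrak$ and $\Nfrak$ exchanged.
\item It then rules out $\lambda^*(B)<\lambda^*(A)$ by a second use of that proposition, with $A$ and $B$ exchanged.
\item This is followed by a Theorem~\ref{th:modifiedBRW} contradiction on the interval $(\lambda(A),\min(\lambda(B),\lambda^*(A)))$.
\item For (2), it applies (1) again with $\Mfrak$ and $\Nfrak$ exchanged.
\end{enumerate}

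\emph{What each buys.} The paper's version stresses that the theorem is a formal consequence of Proposition~\ref{pro:pureweak-nonstrong}, matching its remark that the two results are equivalent, and it exploits the $\Mfrak\leftrightarrow\Nfrak$ symmetry. Your version shows that $\lambda^*(A)\le\lambda(A)$ needs nothing beyond the strict transfer in Theorem~\ref{th:modifiedBRW}, and it is easier to verify. Your preliminary reduction to $\Delta_{\Mfrak,\Nfrak}\subseteq A\cap B$ is harmless but unnecessary, since Theorem~\ref{th:modifiedBRW} already handles general $A,B$ under Assumption~\ref{assump:1}.
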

\begin{proof}
	\leavevmode
	\begin{enumerate}
		\item 
		We proceed by contradiction. By hypothesis, $\lambda(A) < \lambda(B)$. 
		If $\min(\lambda^{{*}}(A), \lambda^{{*}}(B)) > \lambda(A) = \min(\lambda(A), 
		\lambda(B))$, then the equivalence of $(1)$ and $(3)$ in 
		Proposition~\ref{pro:pureweak-nonstrong}, with the role of $\Mfrak$ and $\Nfrak$ interchanged, would imply $\lambda(A) = \lambda(B)$, 
		contradicting our assumption. Hence $\min(\lambda^{{*}}(A), \lambda^{{*}}(B)) 
		\le \lambda(A)$.
		
		\noindent To conclude that $\lambda^*(A) \le \lambda(A)$, it suffices to show that $\lambda^*(A)=\min(\lambda^{{*}}(A), \lambda^{{*}}(B))$, since in that case $\lambda^*(A) = 
		\min(\lambda^{{*}}(A), \lambda^{{*}}(B)) \le \lambda(A)$. Suppose by 
		contradiction that $\lambda^*(A) > \lambda^*(B)$, so that $\lambda^*(B) = 
		\min(\lambda^{{*}}(A), \lambda^{{*}}(B)) \le \lambda(A)$. Apply
		Proposition~\ref{pro:pureweak-nonstrong} again, this time with the roles of $A$ and $B$ interchanged: since (3) does not hold then (1) does not hold as well, whence
		$\lambda(A) = \min(\lambda(A), \lambda(B)) \le \lambda^*(B)$.
		We already proved that $\lambda^*(B)=\min(\lambda^{{*}}(A), \lambda^{{*}}(B)) 
		\le \lambda(A)$, therefore 
		$\lambda(A) = \lambda^*(B) < \min(\lambda(B), \lambda^*(A))$.
		
		\noindent Now, for any $\lambda \in \big(\lambda(A), \min(\lambda(B), 
		\lambda^*(A))\big)$, we have $q^\Mfrak(A|\lambda) < \mathbf{1} = 
		q^\Mfrak(B|\lambda)$ and $q^\Nfrak(B|\lambda) < \mathbf{1} = 
		q^\Nfrak(A|\lambda)$, which contradicts Theorem~\ref{th:modifiedBRW}. 
		We conclude that $\lambda^*(A) \le \lambda^*(B)$, as required.
		\item 
		We already proved in (1) that $\lambda^*(A)\le \min(\lambda(A),\lambda^*(B))$ is a consequence of $\lambda(A)<\lambda(B)$. If $\lambda^*(A)=\lambda^*(B)\le\lambda(A)$ does not hold then $\lambda^*(A)<\lambda^*(B)$ which, by;
		switching the roles of $\Mfrak$ and $\Nfrak$, implies now $\lambda^*(A)\le\lambda(A)$, thus $\lambda^*(A)=\lambda(A)$.
		Since by (1) we know that $\lambda^*(A) \le \lambda^*(B)$, then the only other possibility is  $\lambda^*(A)=\lambda^*(B)\le\lambda(A)$.
	\end{enumerate}
\end{proof}

It is worth noting that Proposition~\ref{pro:pureweak-nonstrong} and Theorem~\ref{th:mainmod} are equivalent. Indeed, on the one hand the proof of 
Theorem~\ref{th:mainmod} shows that it is a corollary of Proposition~\ref{pro:pureweak-nonstrong}; on the other hand, it is a simple exercise to show
 that Proposition~\ref{pro:pureweak-nonstrong} can be proven by using theorem~\ref{th:mainmod}.

%\textcolor{red}{
%As a consequence of Theorem~\ref{th:mainmod}, when $(X,\Mfrak)$, $(X,\Nfrak)$, $A$ and $B$ satisfy Assumption \ref{assump:1}
%and $\lambda(A)<\lambda(B)$, it is not possible that $\lambda^*(A)>\lambda^*$, nor 
%$\lambda^*(A)>\lambda(A)$. 
%Moreover, whenever  
%$(X,\Mfrak)$, $(X,\Nfrak)$, $A$ and $B$ satisfy Assumption \ref{assump:1}, 
% only some mutual positions for the critical parameters of $\Nfrak$ are possible.
%Namely, if we denote by $\lambda(\cdot)$ and $\lambda^*(\cdot)$ the critical parameters of $\Mfrak$ and $\Nfrak$ respectively,
%the following situations, and those obtained by switching $A$ and $B$ or  $\Mfrak$ and $\Nfrak$, are the only ones possible:
%\begin{enumerate}
%	%	\item $\lambda(A)=\lambda(B) < \min(\lambda^*(A), \lambda^*(B))$,
%	%	\item $\lambda(A)=\lambda^*(A)<\min(\lambda(B), \lambda^*(B))$,
%	%	\item $\lambda(A)=\lambda(B) = \min(\lambda^*(A), \lambda^*(B))$,
%	%	\item $\lambda(A)=\lambda^*(A) = \lambda^*(B)<\lambda(B)$.
%	\item $\lambda^*(A)=\lambda^*(B) < \min(\lambda(A), \lambda(B))$,
%	\item $\lambda^*(A)=\lambda(A)<\min(\lambda(B), \lambda^*(B))$,
%	\item $\lambda^*(A)=\lambda^*(B) = \min(\lambda(A), \lambda(B))$,
%	\item $\lambda^*(A)=\lambda(A) = \lambda(B)<\lambda^*(B)$.
%\end{enumerate}
%}

As a consequence of Theorem~\ref{th:mainmod}, when $(X,\Mfrak)$, $(X,\Nfrak)$, $A$ and $B$ satisfy Assumption \ref{assump:1}, when there are at least two distinct critical parameters, only some mutual positions 
for the critical parameters are possible. Suppose that $\lambda(A)<\lambda(B)$.
Then the following situations, are the only ones possible:
\begin{enumerate}
	\item $\lambda^*(A)=\lambda^*(B) < \lambda(A)$,
	\item $\lambda^*(A)=\lambda(A)<\min(\lambda(B), \lambda^*(B))$,
	\item $\lambda^*(A)=\lambda^*(B) = \lambda(A)$,
	\item $\lambda^*(A)=\lambda(A) = \lambda(B)<\lambda^*(B)$.
\end{enumerate}
Similarly, one can obtain the possibilities in the cases where $\lambda^*(A)<\lambda^*(B)$ and/or the roles of $A$ and $B$ are swapped. If $\lambda(A)=\lambda(B)$ and $\lambda^*(A)=\lambda^*(B)$, then nothing more can be inferred:
it may be that $\lambda(A)=\lambda^*(A)$, $\lambda(A)<\lambda^*(A)$ or $\lambda(A)>\lambda^*(A)$.

See also Figure \ref{fig:possible-lambda} for the possible reciprocal positions of these critical parameters, in the case when 
$\lambda(A)<\lambda(B)$.

\begin{figure}[h!]
	\begin{tikzpicture}[scale=1]
		
		%\node[above] at (6,0.3) {If $\lambda(A)< \lambda(B)$};
		
		\draw[->] (0,0) -- (8,0);
		\node[below] at (0,-0.15) {0};
		\draw[-] (0,-.1) -- (0,.1);
		\node[right] at (9,0) {Case (1): POSSIBLE};

		\draw[-] (2,-.1) -- (2,.1);
		\draw[-] (4.5,-.1) -- (4.5,.1);
		\node[below] at (2,-0.1) {$\lambda^*(A)=\lambda^*(B)$};
		\node[below] at (4.5,-0.1) {$\lambda(A)$};

		%%%%%%%%%%%%%%%%%%%%%%%%%%%%

		\draw[->] (0,-1) -- (8,-1);
		\node[right] at (9,-1) {Case (2): POSSIBLE};
		\draw[-] (0,-1.1) -- (0,-.9);
		\node[below] at (0,-1.15) {0};

		\draw[-] (4.5,-1.1) -- (4.5,-0.9);
		\draw[-] (7,-1.1) -- (7,-.9);
		
		\node[below] at (7,-1.1) {$\lambda^*(B)$};
		\node[below] at (4.5,-1.1) {$\lambda(A)=\lambda^*(A)$};

		%%%%%%%%%%%%%%%%%%%%%%%%%%%%

		\draw[->] (0,-2) -- (8,-2);
		\node[right] at (9,-2) {Case (3): POSSIBLE};
		\draw[-] (0,-2.1) -- (0,-1.9);
		\node[below] at (0,-2.15) {0};

		\draw[-] (4.5,-2.1) -- (4.5,-1.9);
		%\draw[-] (7,-2.1) -- (7,-1.9);
		
		%\node[below] at (7,-1.1) {$\lambda^*(B)$};
		\node[below] at (4.5,-2.1) {$\lambda(A)=\lambda^*(A)=\lambda^*(B)$};

		%%%%%%%%%%%%%%%%%%%%%%%%%%%%
		
		\draw[->] (0,-3) -- (8,-3);
		\node[right] at (9,-3) {NOT POSSIBLE};
		\draw[-] (0,-3.1) -- (0,-2.9);
		\node[below] at (0,-3.15) {0};

		\draw[-] (4.5,-3.1) -- (4.5,-2.9);
		\draw[-] (7,-3.1) -- (7,-2.9);
		
		\node[below] at (7,-3.1) {$\lambda^*(B)$};
		\node[below] at (4.5,-3.1) {$\lambda(A)$};
		\draw[-] (2,-2.9) -- (2,-3.1);
		\node[below] at (2,-3.1) {$\lambda^*(A)$};
		
		%%%%%%%%%%%%%%%%%%%%%%%%%%%%

		\draw[->] (0,-4) -- (8,-4);
		\node[right] at (9,-4) {NOT POSSIBLE};
		\draw[-] (0,-4.1) -- (0,-3.9);
		\node[below] at (0,-4.15) {0};
		
		\draw[-] (2,-3.9) -- (2,-4.1);
		\draw[-] (3.25,-4.1) -- (3.25,-3.9);
		\draw[-] (4.5,-4.1) -- (4.5,-3.9);
		
		\node[below] at (2,-4.1) {$\lambda^*(A)$};
		\node[below] at (3.25,-4.1) {$\lambda^*(B)$};
		\node[below] at (4.5,-4.1) {$\lambda(A)$};
		
		%%%%%%%%%%%%%%%%%%%%%%%%%%%%
				
		\draw[->] (0,-5) -- (8,-5);
		\node[right] at (9,-5) {NOT POSSIBLE};
		\draw[-] (0,-5.1) -- (0,-4.9);
		\node[below] at (0,-5.15) {0};
		
		\draw[-] (7,-4.9) -- (7,-5.1);
		%\draw[-] (3.25,-4.1) -- (3.25,-3.9);
		\draw[-] (4.5,-5.1) -- (4.5,-4.9);
		
		\node[below] at (7,-5.1) {$\lambda^*(A)$};
		%\node[below] at (3.25,-4.1) {$\lambda^*(B)$};
		\node[below] at (4.5,-5.1) {$\lambda(A)$};
		
		%%%%%%%%%%%%%%%%%%%%%%%%%%%%
		
	\end{tikzpicture}
	\caption{$(X,\Mfrak)$, $(X;\Nfrak)$, $A$ and $B$ satisfy Assumption \ref{assump:1}.
		The corresponding critical parameters of $(X,\Mfrak)$ are $\lambda(A), \lambda (B)$, while the ones of
		$(X,\Nfrak)$ are $\lambda^*(A)$, $\lambda^* (B)$. 
		Assume that $\lambda(A)< \lambda (B)$.
		The first three pictures illustrate the possible 
		positions of $\lambda^*(A)$, $\lambda^* (B)$, the others show two situations which are not possible.
	}
	\label{fig:possible-lambda}
\end{figure}

As a direct consequence of Theorem~\ref{th:mainmod}, we get the following corollary. %Its proof is straightforward and we omit it.
Note that, under the hypothesis $\lambda(A)<\lambda(B)$, the first two sentences of Corollary~\ref{cor:new1} together show that $\lambda^*(A)=\lambda^*(B)$ if and only if $\lambda^*(B) \le \lambda(A)$ (where $\lambda$ and $\lambda^*$ are the same critical parameters of Theorem~\ref{th:mainmod}).
\begin{Corollary}\label{cor:new1}
	Let 
	$(X,\Mfrak)$ and $(X,\Nfrak)$ be two irreducible GMBRW with critical parameters denoted by $\lambda(\cdot)$ and $\lambda^*(\cdot)$ respectively. Consider $A, B \subseteq X$ such that Assumption~\ref{assump:1} holds and suppose that $\lambda(A)\le \lambda(B)$.
	Then the following hold.
	\begin{enumerate}
		\item If  $\max(\lambda^*(A),\lambda^*(B)) \le \lambda(A)$,
		then $\lambda^*(A)=\lambda^*(B)\le \lambda(A)$.
		\item If $\lambda(A)< \lambda(B)$ then $\lambda^*(A) \le \lambda^*(B)$; moreover $\lambda^*(B)>\lambda(A)$ implies $\lambda^*(B)>\lambda^*(A)=\lambda(A)$.
		\item If $\lambda(A) < \lambda(B)$, then it is not possible that 
		$\lambda^*(B)< \lambda^*(A)$, nor 
		that $\lambda(A)<  \lambda^*(A)$.
	\end{enumerate}
\end{Corollary}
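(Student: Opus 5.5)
The plan is to derive each item directly from Proposition~\ref{pro:pureweak-nonstrong} and Theorem~\ref{th:mainmod}. The only extra work is the boundary case $\lambda(A)=\lambda(B)$ allowed in item (1). I will treat the three items in order.

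\emph{Item (1).} Assume $\max(\lambda^*(A),\lambda^*(B))\le\lambda(A)=\min(\lambda(A),\lambda(B))$.
\begin{itemize}
\item If the inequality is strict, condition (2) of Proposition~\ref{pro:pureweak-nonstrong} holds. Its equivalence with (3) gives $\lambda^*(A)=\lambda^*(B)<\lambda(A)$.
\item If $\lambda(A)<\lambda(B)$, Theorem~\ref{th:mainmod}(2) leaves two cases. Either $\lambda^*(A)=\lambda^*(B)\le\lambda(A)$, as claimed, or $\lambda^*(A)=\lambda(A)<\lambda^*(B)$. The latter contradicts $\lambda^*(B)\le\lambda(A)$.
\item The remaining case is $\max(\lambda^*(A),\lambda^*(B))=\lambda(A)=\lambda(B)$. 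Suppose, for contradiction, that $\lambda^*(A)\ne\lambda^*(B)$, say $\lambda^*(B)<\lambda^*(A)=\lambda(A)$. I then apply Theorem~\ref{th:mainmod} with the roles of $\Mfrak$ and $\Nfrak$ swapped and of $A$ and $B$ swapped; Assumption~\ref{assump:1} is symmetric, so this is allowed. Since $\lambda^*(B)<\lambda^*(A)$, item (2) of that theorem gives either $\lambda(B)=\lambda(A)\le\lambda^*(B)$ or $\lambda(B)=\lambda^*(B)<\lambda(A)$. Both contradict $\lambda^*(B)<\lambda(A)=\lambda(B)$. The case $\lambda^*(A)<\lambda^*(B)$ is symmetric.
\end{itemize}
This boundary case is the only delicate step. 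The direct alternative would be to pick $\lambda$ in the gap $(\min\lambda^*,\max\lambda^*)$ and compare the vectors using Theorem~\ref{th:modifiedBRW}. That fails because $\mathbf{q}^\Mfrak$ at such $\lambda$ is not controlled, since $\lambda<\lambda(A)$ gives $\mathbf{q}^\Mfrak(A|\lambda)=\mathbf{q}^\Mfrak(B|\lambda)=\mathbf 1$. The swapped-roles application of Theorem~\ref{th:mainmod} avoids this.

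\emph{Item (2).} Assume $\lambda(A)<\lambda(B)$. Then $\lambda^*(A)\le\lambda^*(B)$ is Theorem~\ref{th:mainmod}(1). Now suppose $\lambda^*(B)>\lambda(A)$. The first alternative of Theorem~\ref{th:mainmod}(2), $\lambda^*(A)=\lambda^*(B)\le\lambda(A)$, is then excluded. Hence $\lambda^*(A)=\lambda(A)<\lambda^*(B)$.

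\emph{Item (3).} This is a restatement of Theorem~\ref{th:mainmod}(1) under $\lambda(A)<\lambda(B)$. That result gives $\lambda^*(A)\le\lambda^*(B)$ and $\lambda^*(A)\le\lambda(A)$, which exclude $\lambda^*(B)<\lambda^*(A)$ and $\lambda(A)<\lambda^*(A)$ respectively.
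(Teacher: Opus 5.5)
Your proof is correct. Items (2) and (3) are argued exactly as in the paper, directly from Theorem~\ref{th:mainmod}.

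For item (1) you take a longer route than needed. The paper splits only on $m:=\min(\lambda^*(A),\lambda^*(B))$. If $m=\lambda(A)$, the hypothesis $\max(\lambda^*(A),\lambda^*(B))\le\lambda(A)$ forces $\lambda^*(A)=\lambda^*(B)=\lambda(A)$. If $m<\lambda(A)=\min(\lambda(A),\lambda(B))$, condition (1) of Proposition~\ref{pro:pureweak-nonstrong} holds, and the implication (1)$\Rightarrow$(3) gives $\lambda^*(A)=\lambda^*(B)<\lambda(A)$. This handles $\lambda(A)=\lambda(B)$ and $\lambda(A)<\lambda(B)$ at once. You used only the (2)$\Rightarrow$(3) direction of the proposition, when the stronger (1)$\Rightarrow$(3) was available. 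Your detour through Theorem~\ref{th:mainmod}(2), and the role-swapped application in the boundary case, are valid (Assumption~\ref{assump:1} is indeed symmetric under both swaps) but unnecessary.

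Your side remark that the direct gap argument fails is mistaken. Take $m<\lambda<\max(\lambda^*(A),\lambda^*(B))\le\lambda(A)\le\lambda(B)$. The equality $\mathbf{q}^\Mfrak(A|\lambda)=\mathbf{q}^\Mfrak(B|\lambda)=\mathbf{1}$ is exactly the control required. The first line of \eqref{eq:thm3.2} in Theorem~\ref{th:modifiedBRW} transfers it to $\mathbf{q}^\Nfrak(A|\lambda)=\mathbf{q}^\Nfrak(B|\lambda)$. This contradicts the fact that, for such $\lambda$, one of these vectors equals $\mathbf{1}$ and the other does not. That argument is precisely the proof of Proposition~\ref{pro:pureweak-nonstrong}, so the boundary case is not delicate at all.
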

\begin{proof}
	\leavevmode
	\begin{enumerate}
		\item 
		In this case we have either 
		$\min(\lambda^*(A),\lambda^*(B)) = \lambda(A)$ (thus $\lambda^*(A)=\lambda^*(B)=\lambda(A)$), 
		or else $\min(\lambda^*(A),\lambda^*(B))<\lambda(A)$ which, according to Proposition~\ref{pro:pureweak-nonstrong}, implies $\lambda^*(A)=  \lambda^*(B) <\lambda(A)$. 
		\item	
		Suppose that $\lambda(A)< \lambda(B)$; the inequality $\lambda^*(A) \le \lambda^*(B)$ comes from Theorem~\ref{th:mainmod}(1). 
		%	Therefore $\min(\lambda^*(A),\lambda^*(B))=\lambda^*(A)$ and $\max(\lambda^*(A),\lambda^*(B))=\lambda^*(B)$.	
		If, in addition	$\lambda^*(B) > \lambda(A)$ holds then the conclusion follows from Theorem~\ref{th:mainmod}(2).
		\item It comes from Theorem~\ref{th:mainmod}(1).
	\end{enumerate}
\end{proof}

The following corollary states that a critical parameter $\lambda(A)$ is either maximal, when we fix the GMBRW $\Mfrak$, in the family
$\{\lambda(C)\}_{C\neq\emptyset}$, or it is maximal, when we fix $A$, in the family 
$\{\lambda^\Nfrak(A)\}_{\Nfrak}$, where $\Nfrak$ satisfies Assumption~\ref{assump:1}, for some
finite, nonempty set $B\subseteq X$.
\begin{Corollary}\label{cor:new2}
	Let 
	$(X,\Mfrak)$ be an irreducible GMBRW and 
	let $A\subseteq X$ be nonempty subset.
	Then either $\lambda^\Mfrak(A)=\lambda^\Mfrak_s$ (equivalently $\lambda^\Mfrak(A) \ge \lambda^\Mfrak(B)$ for all nonempty $B \subset X$), or $\lambda^\Mfrak(A)\ge\lambda^\Nfrak(A)$ for all generic irreducible GMBRWs $(X,\Nfrak)$ such that
	Assumption~\ref{assump:1} holds.\\
	In particular, if $\lambda^\Mfrak(A) < \lambda^\Mfrak_s$, then for all $\Nfrak\in [\Mfrak]$, either
	$\lambda^\Nfrak(A)=\lambda^\Nfrak_s\le\lambda^\Mfrak(A)$, or 
	$\lambda^\Nfrak(A)=\lambda^\Mfrak(A) < \lambda^\Nfrak_s$.
\end{Corollary}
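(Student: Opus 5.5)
The argument is a direct application of Theorem~\ref{th:mainmod}, with $B$ chosen to be a finite nonempty set. Fix a finite nonempty $B\subseteq X$, for instance a singleton. For an irreducible GMBRW, $\lambda^\Mfrak(B)=\lambda^\Mfrak_s$. We also showed that $C\subseteq D$ implies $\lambda^\Mfrak(C)\ge\lambda^\Mfrak(D)$, and every nonempty set contains a finite nonempty one. Hence $\lambda^\Mfrak_s$ is the maximum of $\{\lambda^\Mfrak(C)\}_{C\neq\emptyset}$. This already gives the parenthetical equivalence: $\lambda^\Mfrak(A)=\lambda^\Mfrak_s$ if and only if $\lambda^\Mfrak(A)\ge\lambda^\Mfrak(C)$ for all nonempty $C$.

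Now suppose $\lambda^\Mfrak(A)\neq\lambda^\Mfrak_s$, that is, $\lambda^\Mfrak(A)<\lambda^\Mfrak(B)$. Let $(X,\Nfrak)$ be an irreducible GMBRW for which Assumption~\ref{assump:1} holds for the pair $A,B$. Theorem~\ref{th:mainmod}(1) then gives $\lambda^\Nfrak(A)\le\lambda^\Mfrak(A)$, which is the required dichotomy.

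The one point needing care is the quantifier in the phrase ``for all $\Nfrak$ such that Assumption~\ref{assump:1} holds''. I read it, as the sentence preceding the corollary indicates, as Assumption~\ref{assump:1} holding for $A$ and some finite nonempty $B$. This is harmless because $\lambda^\Mfrak(B)=\lambda^\Mfrak_s$ does not depend on the choice of finite $B$.

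For the second part, take $\Nfrak\in[\Mfrak]$. By Example~\ref{ex:ass1}, Assumption~\ref{assump:1} holds for $A$ and any finite nonempty $B$. Apply Theorem~\ref{th:mainmod}(2) with $\lambda(A)<\lambda(B)=\lambda^\Mfrak_s$ and $\lambda^*(B)=\lambda^\Nfrak(B)=\lambda^\Nfrak_s$. This yields exactly two possibilities: either $\lambda^\Nfrak(A)=\lambda^\Nfrak_s\le\lambda^\Mfrak(A)$, or $\lambda^\Nfrak(A)=\lambda^\Mfrak(A)<\lambda^\Nfrak_s$.

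No step here is a real obstacle. The only thing to check is that $\lambda^\Nfrak(B)=\lambda^\Nfrak_s$ also holds for the modified process, which follows from the irreducibility of $\Nfrak$.
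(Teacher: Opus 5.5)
Your proposal is correct and follows essentially the same route as the paper: take a finite nonempty $B$ so that $\lambda^\Mfrak(B)=\lambda^\Mfrak_s$, apply Theorem~\ref{th:mainmod}(1) for the first dichotomy, and for $\Nfrak\in[\Mfrak]$ use Example~\ref{ex:ass1} together with Theorem~\ref{th:mainmod}(2) and $\lambda^\Nfrak(B)=\lambda^\Nfrak_s$. Your reading of the quantifier (Assumption~\ref{assump:1} for $A$ and some finite nonempty $B$) is also the one the paper states just before the corollary.
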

\begin{proof}
	We observe that
	the equivalence between $\lambda^\Mfrak(A) \ge \lambda^\Mfrak(B)$ for every nonempty $B \subseteq X$ and $\lambda^\Mfrak(A)=\lambda_s^\Mfrak$, can be proved as follows: clearly for every nonempty $B$  we have $\lambda^\Mfrak(B) \le \lambda_s^\Mfrak$; conversely, if $\lambda^\Mfrak(A) \ge \lambda^\Mfrak(B)$ for every nonempty $B \subseteq X$ then, by taking $B$ finite and nonempty, we have $\lambda^\Mfrak(A) \ge \lambda^\Mfrak_s$ (while $\lambda^\Mfrak(A) \le \lambda^\Mfrak_s$ since $A$ is nonempty).
	
	Since $\lambda^\Mfrak(A)\le\lambda^\Mfrak_s$ when $A$ is nonempty, thus if 
	$\lambda^\Mfrak(A)\neq\lambda^\Mfrak_s$, then $\lambda^\Mfrak(A) < \lambda^\Mfrak_s=\lambda^\Mfrak(B)$ where $B \subseteq X$ is finite and nonempty. Apply Theorem~\ref{th:mainmod}, with such a set $B$, to obtain
	that $\lambda^\Mfrak(A) < \lambda^\Mfrak_s$ implies  $\lambda^\Nfrak(A)\le\lambda^\Mfrak(A)$.\\
	Moreover, when $\Nfrak\in [\Mfrak]$, it means that $\Delta_{\Mfrak,\Nfrak}$ is finite, thus
	$\Delta_{\Mfrak,\Nfrak}\stackrel{\Mfrak}{\Rightarrow} B$ and $\Delta_{\Mfrak,\Nfrak}\stackrel{\Nfrak}{\Rightarrow} B$ for all nonempty sets $B$.
	Thus, again by Theorem~\ref{th:mainmod}, $\lambda^\Mfrak(A) < \lambda^\Mfrak_s$ implies the claim.
\end{proof}

The fact that $\lambda(A)<\lambda(B)$ leads to situations where processes survive in $A$ while going extinct in $B$, with positive probability, as the following proposition shows.
\begin{Proposition}
	\label{pro:nonstrong}
	Let 
	$(X,\Mfrak)$ be an irreducible GMBRW, with critical parameters denoted by $\lambda(\cdot)$.
	Let $A, B \subseteq X$ be such that $\lambda(A)<\lambda(B)$.
	Suppose that $(X,\Nfrak)$ is another irreducible GMBRW, with critical parameters denoted by $\lambda^*(\cdot)$, and that
	Assumption~\ref{assump:1} holds.
\begin{enumerate}
\item	
	If $\lambda^*(B) < \lambda(B)$,
	then for all $\lambda \in  (\max(\lambda(A),\lambda^*(B)), \, \lambda(B) )$ for the $(X,\Nfrak)$-BRW with this fixed $\lambda$, we have 
	$\pr^x(\mathcal{S}(B))>0$ and $\pr^x(\mathcal{S}(A)\cap\mathcal{E}(B)) >0$, for all $x \in X$.
\item	
	If $\lambda_w<\lambda_s$ and $\lambda^*_s < \lambda_s$, then for all $\lambda \in 
	(\max(\lambda_w, \lambda^*_s),\, \lambda_s  )$, the $(X,\Nfrak)$-BRW with this fixed $\lambda$
	has nonstrong local survival, starting from all $x \in X$.
\end{enumerate}	
\end{Proposition}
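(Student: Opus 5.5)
Fix $\lambda\in(\max(\lambda(A),\lambda^*(B)),\lambda(B))$. For $\Mfrak$ we have $\lambda>\lambda(A)$, so by Table~\ref{tb:table3} $\mathbf{q}^\Mfrak(A|\lambda)<\mathbf 1$. Since $\lambda<\lambda(B)$, we also have $\mathbf{q}^\Mfrak(B|\lambda)=\mathbf 1$. Irreducibility makes these hold componentwise: $\mathbf{q}^\Mfrak(x,A|\lambda)<1$ for every $x$, while $\mathbf{q}^\Mfrak(x,B|\lambda)=1$. Hence $\mathbf{q}^\Mfrak(A|\lambda)<\mathbf{q}^\Mfrak(B|\lambda)$, with strict inequality in every coordinate.

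Assumption~\ref{assump:1} lets us apply Theorem~\ref{th:modifiedBRW} to $\boldmu_\lambda,\boldnu_\lambda$, whose set of differences is contained in $\Delta_{\Mfrak,\Nfrak}$. The second line of \eqref{eq:thm3.2} then gives $\mathbf{q}^\Nfrak(A|\lambda)<\mathbf{q}^\Nfrak(B|\lambda)$. One point needs care: the implications $\Delta\Rightarrow A$ are hypothesised for the larger set $\Delta_{\Mfrak,\Nfrak}$, not for the exact difference set at this $\lambda$. I would handle this by rerunning the Remark~\ref{rem:1} argument: survival in a subset of $\Delta_{\Mfrak,\Nfrak}$ implies survival in $\Delta_{\Mfrak,\Nfrak}$, which in turn implies survival in $A$. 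Alternatively, the reduction to $A,B\supseteq\Delta_{\Mfrak,\Nfrak}$ used in the proof of Proposition~\ref{pro:pureweak-nonstrong} serves the same purpose.

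From $\mathbf{q}^\Nfrak(A|\lambda)<\mathbf{q}^\Nfrak(B|\lambda)$, Theorem~\ref{cor:equivalence}(2),(4) yield $A\nRightarrow B$ for $\boldnu_\lambda$. Concretely, there is some $x_0$ with $\pr^{x_0}(\mathcal S(A)\cap\mathcal E(B))>0$. Separately, $\lambda>\lambda^*(B)$ gives $\mathbf{q}^\Nfrak(x,B|\lambda)<1$, i.e. $\pr^x(\mathcal S(B))>0$, for all $x$.

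The remaining step, which I expect to be the main obstacle, is to upgrade ``some $x_0$'' to ``all $x$''. I would use irreducibility of $(X,\boldnu_\lambda)$: starting from any $x$ there is positive probability that, at some generation $n$, a single particle sits at $x_0$. On that event I would try to force every other particle alive at generation $n$ to die out in $B$, i.e. to belong to $\mathcal E(B)$. The delicate point is that the other particles need not go extinct in $B$ with positive probability. It is cleaner to condition on the first generation at which a descendant reaches $x_0$ and argue via the extinction-probability vector. Since $\mathbf{q}(B)$ is a fixed point of $G$, we can write $\pr^x(\mathcal E(B)\cap\mathcal S(A))$ as an expectation over the generation-$n$ configuration. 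The event ``the particle at $x_0$ survives in $A$ without surviving in $B$, and all others go extinct in $B$'' has positive probability provided $\mathbf{q}^\Nfrak(y,B|\lambda)>0$ for the relevant $y$. I need to check that this positivity holds. If $\mathbf{q}(y,B)=0$ somewhere, then, because a path leads from $y$ to $x_0$, irreducibility forces $\mathbf q(x_0,B)$-related contradictions. A cleaner route for this step is Theorem~\ref{cor:equivalence}(3): from some $x_1\notin B$ there is positive probability of surviving in $A$ without ever visiting $B$. From any $x$, one then reaches $x_1$ along a path and has the process restart; the competing particles are handled via $\mathbf q(\cdot,B)>0$, which holds because $\mathbf q(x_1,B)>\mathbf q(x_1,A)\ge0$ and the probabilities propagate along paths.

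Part (2) is the special case $A=X$, $B=\{x\}$ (or any finite nonempty set), with $\lambda(X)=\lambda_w$ and $\lambda(B)=\lambda_s$. Assumption~\ref{assump:1} holds in the setting of local modifications (Example~\ref{ex:ass1}); otherwise it is assumed as in the statement. Part (1) then gives $\mathbf{q}^\Nfrak(x,X|\lambda)<\mathbf q^\Nfrak(x,x|\lambda)<1$ for all $x$, which is nonstrong local survival.
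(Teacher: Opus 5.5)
Up to the existence of one $x_0$ with $\pr^{x_0}(\mathcal S(A)\cap\mathcal E(B))>0$, your argument is the paper's. You take $\mathbf q^{\Mfrak}(A|\lambda)<\mathbf q^{\Mfrak}(B|\lambda)=\mathbf 1$, transfer it with Theorem~\ref{th:modifiedBRW}, apply Theorem~\ref{cor:equivalence}, and get survival in $B$ from $\lambda>\lambda^*(B)$. Your remark that the exact difference set at level $\lambda$, being contained in $\Delta_{\Mfrak,\Nfrak}$, still satisfies the hypotheses is correct. The paper then reads ``for all $x\in X$'' directly off $A\stackrel{\Nfrak}{\nRightarrow}B$, which by definition only gives some $x$. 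So the step you flagged is a real weak point.

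Your patch does not close it. Its key claim is that $\mathbf q^{\Nfrak}(\cdot,B|\lambda)>0$ everywhere because positivity ``propagates along paths''. Irreducibility gives no such propagation. A vertex $y$ with $\mathbf q(y,B)=0$ contradicts nothing at $x_0$: reaching $y$ from $x_0$ only yields $\mathbf q(x_0,B)<1$, which you already know.

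In fact the upgrade is impossible in this generality. Take any irreducible GMBRW $(X,\Mfrak)$ with $\lambda_w<\lambda_s$ (e.g.\ Example~\ref{ex:T5T6}) and fix $o\in X$. Let $\Nfrak$ coincide with $\Mfrak$ off $o$, with $G_{\boldnu_\lambda}(\mathbf z|o):=\mathbf z(o)\,G_{\boldmu_\lambda}(\mathbf z|o)$; that is, every particle at $o$ also leaves a copy of itself at $o$. Multiplying by $\mathbf z(o)\ge 0$ preserves the germ inequalities, and the change only adds an edge. Hence $(X,\Nfrak)$ is an irreducible GMBRW with $\Mfrak\sim\Nfrak$, and Assumption~\ref{assump:1} holds for $A=X$, $B=\{o\}$ by Example~\ref{ex:ass1}. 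Starting from $o$ there is a particle at $o$ in every generation, so $\mathbf q^{\Nfrak}(o,X|\lambda)=\mathbf q^{\Nfrak}(o,\{o\}|\lambda)=0$ for all $\lambda$, and hence $\lambda^*_s=0$. Thus for every $\lambda\in(\lambda_w,\lambda_s)$ all hypotheses of (1) and (2) hold. Yet $\pr^o(\mathcal S(X)\cap\mathcal E(\{o\}))=0$, and the process started at $o$ has strong, not nonstrong, local survival. Your part (2) inherits the same failure.

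What your route does prove is the ``some $x$'' version. It also proves the ``all $x$'' version under the extra hypothesis $\mathbf q^{\Nfrak}(y,B|\lambda)>0$ for every $y$. This is guaranteed if $\nu_{y,\lambda}(\mathbf 0)>0$ for all $y$, since $\mathbf q(y,B)=G_{\boldnu_\lambda}(\mathbf q(B)|y)\ge \nu_{y,\lambda}(\mathbf 0)$; that covers discrete-time counterparts of continuous-time and ageing BRWs. Under it your conditioning argument is correct. Pick $n$ and $\eta\in S_X$ with $\eta(x_0)\ge 1$ and $\pr^x(\eta_n=\eta)>0$, which irreducibility provides. The branching property then gives
\[
\pr^x\big(\mathcal S(A)\cap\mathcal E(B)\big)\ge \pr^x(\eta_n=\eta)\,\pr^{x_0}\big(\mathcal S(A)\cap\mathcal E(B)\big)\,\mathbf q(x_0,B)^{\eta(x_0)-1}\prod_{y\neq x_0}\mathbf q(y,B)^{\eta(y)}>0,
\]
where the product is finite because $\eta$ has finite support.
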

\begin{proof}
\begin{enumerate}
\item
	If $\lambda \in \big ( \max(\lambda(A),\lambda^*(B)), \lambda(B)\big )$ we have 
	$\mathbf{q}^{\Mfrak}(B|\lambda)=\mathbf{1}>\mathbf{q}^{\Mfrak}(A|\lambda)$. According to Theorem~\ref{th:modifiedBRW},
	$\mathbf{1}>\mathbf{q}^{\Nfrak}(B|\lambda)>\mathbf{q}^{\Nfrak}(A|\lambda)$, where the first inequality follows from $\lambda > \lambda^{{\Nfrak}}(B)$ 
	and implies that $\pr^x(\mathcal{S}(B))>0$, for all $x\in X$.
	By Theorem \ref{cor:equivalence}, $A\stackrel{\Nfrak}{\nRightarrow} B$
	and this means that $\pr^x(\mathcal{S}(A)\cap\mathcal{E}(B)) >0$, for all $x \in X$.
\item	
	The second part follows easily since it is enough to choose $A=X$ and a nonempty finite set $B$, then
	$\lambda(A)=\lambda_w$, $\lambda(B)=\lambda_s$,  $\lambda^*(A)=\lambda^*_w$ and $\lambda^*(B)=\lambda^*_s$.	\\
	\end{enumerate}	
\end{proof}

An application of these results is given in the following section.

\subsection{Global and local critical parameters for GMBRWs 
%and continuous-time BRWs 
under finite modifications}
\label{subsec:max}

Here we specify our general results for irreducible GMBRWs by considering the case of the usual critical parameters $\lambda_w$ and $\lambda_s$ and finite modifications. The following results can be stated, for instance, for the subclass of discrete-time counterpart of continuous-time BRWs as in \cite{cf:BZ2025} (see Remark~\ref{rem:continuoustime} for details).

The first result follows from Proposition~\ref{pro:pureweak-nonstrong} and Corollary~\ref{cor:new1} (see also \cite[Corollary 2]{cf:BZ2025}).

\begin{Corollary}
\label{cor:classiccritical}
Let $(X,\Mfrak)$ and $(X,\Nfrak)$  be two irreducible GMBRWs such that $\Mfrak \sim \Nfrak$
 and denote the critical parameters by $\lambda(\cdot)$ and $\lambda^*(\cdot)$ respectively. The following are equivalent:
\begin{enumerate}
\item $\lambda_w^*<\lambda_w$,
\item $\lambda_s^*<\lambda_w$,
\item $\lambda_w^*=\lambda_s^*<\lambda_w$.
\end{enumerate}
Finally, if $\lambda_s^* \le \lambda_w$ then $\lambda_w^*=\lambda_s^*\le \lambda_w$; if, in addition, $\lambda_w<\lambda_s$ then $\lambda_s^* > \lambda_w$ implies $\lambda_s^*>\lambda_w^*=\lambda_w$.
\end{Corollary}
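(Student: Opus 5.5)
The plan is to specialize Proposition~\ref{pro:pureweak-nonstrong} and Corollary~\ref{cor:new1} to the pair $A:=X$ and $B:=\{x_0\}$, with $x_0\in X$ fixed, or more generally to any finite nonempty $B$. Since $\Mfrak\sim\Nfrak$, Example~\ref{ex:ass1} shows that Assumption~\ref{assump:1} holds for this pair. Irreducibility and Remark~\ref{rem:1} then give the identifications
\[
\lambda(A)=\lambda_w,\quad \lambda(B)=\lambda_s,\quad \lambda^*(A)=\lambda_w^*,\quad \lambda^*(B)=\lambda_s^*.
\]
Moreover, $A\supseteq B$ yields $\lambda_w\le\lambda_s$ and $\lambda_w^*\le\lambda_s^*$ (Section~\ref{subsec:germd}). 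Hence
\[
\min(\lambda(A),\lambda(B))=\lambda_w,\qquad \min(\lambda^*(A),\lambda^*(B))=\lambda_w^*,\qquad \max(\lambda^*(A),\lambda^*(B))=\lambda_s^*.
\]

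With these substitutions, the three conditions of Proposition~\ref{pro:pureweak-nonstrong} read exactly $\lambda_w^*<\lambda_w$, $\lambda_s^*<\lambda_w$, and $\lambda_w^*=\lambda_s^*<\lambda_w$. Their equivalence is therefore immediate from that proposition.

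For the final sentence, I would use Corollary~\ref{cor:new1}, whose hypothesis $\lambda(A)\le\lambda(B)$ is just $\lambda_w\le\lambda_s$. If $\lambda_s^*\le\lambda_w$, then $\max(\lambda_w^*,\lambda_s^*)=\lambda_s^*\le\lambda_w$, and Corollary~\ref{cor:new1}(1) gives $\lambda_w^*=\lambda_s^*\le\lambda_w$. If in addition $\lambda_w<\lambda_s$, then $\lambda(A)<\lambda(B)$. Corollary~\ref{cor:new1}(2) then states that $\lambda_s^*=\lambda^*(B)>\lambda(A)=\lambda_w$ implies $\lambda_s^*>\lambda_w^*=\lambda_w$, which is the claim.

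There is no substantive obstacle here. The only points needing care are the identifications: $\lambda(B)=\lambda_s$ for a finite nonempty $B$, which uses irreducibility of both families, and the orderings $\lambda_w\le\lambda_s$ and $\lambda^*_w\le\lambda^*_s$, which are needed to rewrite the $\min$ and $\max$ terms as above. Both facts were already established in Section~\ref{subsec:germd}.
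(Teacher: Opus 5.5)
Your proof is correct and takes the same route as the paper, which obtains this corollary directly from Proposition~\ref{pro:pureweak-nonstrong} and Corollary~\ref{cor:new1} applied to $A=X$ and a finite nonempty $B$, with Assumption~\ref{assump:1} supplied by Example~\ref{ex:ass1}. Your explicit rewriting of the $\min$ and $\max$ terms via $\lambda_w\le\lambda_s$ and $\lambda_w^*\le\lambda_s^*$ is exactly the bookkeeping the paper leaves implicit.
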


The second result deals with the maximality of the global critical parameter $\lambda_w$. If a class contains at least one GMBRW with pure global survival phase, then each and every one of them has the same parameter $\lambda_w$ which attains the maximum value inside the class. Roughly speaking, either $\lambda_w$ attains the maximum value or $\lambda_w=\lambda_s$. it was originally proven for continuous-time BRWs in \cite[Proposition 2]{cf:BZ2025} but now it follows from Theorem~\ref{th:mainmod} (or Corollary~\ref{cor:new1}) by choosing a finite nonempty $A$ and $B:=X$.

\begin{Corollary}\label{cor:maximality}
Let $(X,\Mfrak)$ such that $\lambda_w^\Mfrak < \lambda_s^\Mfrak$. Then for all $\Nfrak \in [\Mfrak]$ we have $\lambda_w^{\Nfrak} \le \lambda_w^\Mfrak$. Moreover,  for all $\Nfrak \in [\Mfrak]$ such that $\lambda_w^{\Nfrak} < \lambda_s^{\Nfrak}$,  we have $\lambda_w^{\Nfrak} = \lambda_w^\Mfrak$.	
\end{Corollary}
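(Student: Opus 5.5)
The plan is to read Corollary~\ref{cor:maximality} as a special case of Theorem~\ref{th:mainmod}, with $A$ a finite nonempty set and $B:=X$. With these choices $\lambda(A)=\lambda_s$ and $\lambda(B)=\lambda_w$. Since the natural ordering is $\lambda_w\le\lambda_s$, I will relabel so that the smaller parameter sits on the first set. So I apply Theorem~\ref{th:mainmod} with first set $X$ and second set a fixed finite nonempty set $F\subseteq X$. The hypothesis $\lambda^\Mfrak_w<\lambda^\Mfrak_s$ is exactly $\lambda^\Mfrak(X)<\lambda^\Mfrak(F)$.

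Next I check Assumption~\ref{assump:1}. Since $\Nfrak\in[\Mfrak]$, both GMBRWs are irreducible and $\Delta_{\Mfrak,\Nfrak}$ is finite. By Example~\ref{ex:ass1}, Assumption~\ref{assump:1} then holds for every pair of nonempty sets, in particular for $X$ and $F$. If $\Delta_{\Mfrak,\Nfrak}=\emptyset$ the families coincide and there is nothing to prove. Otherwise Remark~\ref{rem:1} gives $\Delta\Rightarrow C$ for every nonempty $C$, for each $\lambda$.

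For the first claim, Theorem~\ref{th:mainmod}(1) gives $\lambda^\Nfrak(X)\le\lambda^\Mfrak(X)$, that is, $\lambda^\Nfrak_w\le\lambda^\Mfrak_w$. This uses $\lambda^\Nfrak(F)=\lambda^\Nfrak_s$, which holds because $\Nfrak$ is irreducible.

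For the second claim, take $\Nfrak\in[\Mfrak]$ with $\lambda^\Nfrak_w<\lambda^\Nfrak_s$. Since $\sim$ is symmetric, $\Mfrak\in[\Nfrak]$, so the first claim with the roles of $\Mfrak$ and $\Nfrak$ exchanged gives $\lambda^\Mfrak_w\le\lambda^\Nfrak_w$. Combined with the previous step, this yields equality. Alternatively, Theorem~\ref{th:mainmod}(2) rules out the branch $\lambda^*(X)=\lambda^*(F)$, because $\lambda^\Nfrak_w<\lambda^\Nfrak_s$, and so forces $\lambda^*(X)=\lambda(X)$.

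There is no real obstacle. The only care needed is the relabeling of $A$ and $B$, and checking the symmetric use of the hypotheses: Assumption~\ref{assump:1} is symmetric in $\Mfrak$ and $\Nfrak$, so it still holds after swapping.
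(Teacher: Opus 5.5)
Your proposal is correct and is essentially the paper's argument, which simply invokes Theorem~\ref{th:mainmod} with one set finite and the other equal to $X$. Your orientation ($A:=X$, $B$ finite, so that $\lambda(A)<\lambda(B)$ is exactly $\lambda^\Mfrak_w<\lambda^\Mfrak_s$) is the one the theorem's hypothesis actually requires, whereas the paper's sentence states the roles the other way round; either of your routes to the equality (symmetry of $\sim$ plus the first claim, or excluding the first branch of Theorem~\ref{th:mainmod}(2)) works.
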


% \begin{Corollary}\label{cor:maximality}
% Let $(X,K) \in \mathcal{X}$ such that $\lambda_w^K < \lambda_s^K$. Then for all $K^* \in [K]$ we have $\lambda_w^{K^*} \le \lambda_w^K$. Moreover,  for all $K^* \in [K]$ such that $\lambda_w^{K^*} < \lambda_s^{K^*}$,  we have $\lambda_w^{K^*} = \lambda_w^K$.	
% \end{Corollary}
% % \begin{proof}%[Proof of Proposition~\ref{pro:maximality}]
% % By contradiction, if $\lambda_w^{K^*} > \lambda_w^K$, then, 
% % according to Proposition~\ref{pro:pureweak-nonstrong-CT}, we have
% % $\lambda_s^K=\lambda_w^K$, which contradicts the hypothesis.
% % From the previous part, it easily follows that if $\lambda_w^K < \lambda_s^K$ and $\lambda_w^{K^*} < \lambda_s^{K^*}$, then $\lambda_w^{K^*} = \lambda_w^K$.
% % \end{proof}

\noindent See \cite{cf:BZ2025} for a full review on the global critical value in a class.

The second result of this section deals with the maximality of the local critical parameter. hereafter, by \emph{local survival phase} we mean \emph{survival in a finite nonempty set} (i.e.~survival in every finite nonempty set since the process is irreducible).
In this case, if a class contains at least one GMBRW with pure global survival phase and no nonstrong local survival phase, then each and every one of them has the same global and local critical parameters which attain the maximum value in the class. It follows from Proposition~\ref{pro:maximality2} since $\lambda(A)=\lambda_s$ for every finite nonempty $A \subseteq X$ and every BRW. This result can be stated, in particular, for the subclass of discrete-time counterparts of continuous-time BRWs as described in Remark~\ref{rem:continuoustime}.

\begin{Corollary}\label{cor:nonstrong}  
Let $(X, \Mfrak)$ and $(X,\Nfrak)$ be two irreducible GMBRWs such that $\Mfrak \sim \Nfrak$.
 Suppose that $\max(\lambda^{\Nfrak}_s, \lambda^\Mfrak_w)<\lambda \le \lambda^\Mfrak_s$, then $(X, \Nfrak)$ has a nonstrong survival phase in every nonempty, finite set.
\end{Corollary}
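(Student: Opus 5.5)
The plan is to reduce Corollary~\ref{cor:nonstrong} to the argument of Proposition~\ref{pro:nonstrong}, specialized to $A:=X$ and a finite nonempty $B$. The one new feature is the endpoint $\lambda=\lambda_s^\Mfrak$, which is now allowed. Since $\Mfrak\sim\Nfrak$, Example~\ref{ex:ass1} shows that Assumption~\ref{assump:1} holds for every pair of nonempty sets. By the remarks after Proposition~\ref{pro:criticalGM}, we have $\lambda^\Mfrak(X)=\lambda^\Mfrak_w$ and $\lambda^\Mfrak(B)=\lambda^\Mfrak_s$, and similarly for $\Nfrak$. The hypothesis $\lambda^\Mfrak_w<\lambda\le\lambda^\Mfrak_s$ already forces $\lambda^\Mfrak_w<\lambda^\Mfrak_s$.

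Fix $\lambda$ with $\max(\lambda_s^\Nfrak,\lambda_w^\Mfrak)<\lambda\le\lambda_s^\Mfrak$ and a finite nonempty $B$. First consider the $\Mfrak$-process. Since $\lambda>\lambda^\Mfrak_w$, Table~\ref{tb:table3} gives $\mathbf q^\Mfrak(X|\lambda)<\mathbf 1$. For local extinction we need $\mathbf q^\Mfrak(B|\lambda)=\mathbf 1$. When $\lambda<\lambda_s^\Mfrak$ this is again Table~\ref{tb:table3}.

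The endpoint $\lambda=\lambda_s^\Mfrak$ is the main obstacle. Here I would use Proposition~\ref{pro:criticalA}: if $\sum_n m_{xx}^{(n)}(\lambda_s)=+\infty$, the set of $\lambda'$ with a convergent series would be open on the right. That requires some continuity in $\lambda$ of the first moments, which is not given for a general GMBRW. Instead I would invoke the fact quoted before Table~\ref{tb:table3} that there is extinction in $x$ at $\lambda=\lambda_s$, justified by the germ-order results of \cite[Theorem 4.1]{cf:BZgerm}. If that fact is unavailable in full generality, the fallback is to prove the claim only for $\lambda<\lambda_s^\Mfrak$, where Table~\ref{tb:table3} settles it directly.

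With $\mathbf q^\Mfrak(B|\lambda)=\mathbf 1>\mathbf q^\Mfrak(X|\lambda)$ in hand, $\mathbf q^\Mfrak(X|\lambda)<\mathbf q^\Mfrak(B|\lambda)$ strictly. Theorem~\ref{th:modifiedBRW-CT} then transfers this to $\mathbf q^\Nfrak(X|\lambda)<\mathbf q^\Nfrak(B|\lambda)$. Since $\lambda>\lambda^\Nfrak_s=\lambda^\Nfrak(B)$, Table~\ref{tb:table3} gives $\mathbf q^\Nfrak(B|\lambda)<\mathbf 1$. Hence $\mathbf q^\Nfrak(X|\lambda)<\mathbf q^\Nfrak(B|\lambda)<\mathbf 1$ at least at one coordinate, while $\mathbf q^\Nfrak(x,B|\lambda)<1$ for every $x$ by irreducibility.

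Finally, nonstrong survival in $B$ requires the strict inequality ${\mathbf q}(x,X)<{\mathbf q}(x,B)<1$ at every starting point. By Theorem~\ref{cor:equivalence}, $X\stackrel{\Nfrak}{\nRightarrow}B$, so with positive probability the process survives globally while dying out in $B$. To upgrade this to every starting $x$, I would use irreducibility: from any $x$, with positive probability a descendant reaches the vertex $x_0$ where the strict inequality holds, and from there the event of global survival with extinction in $B$ occurs. This yields $\mathbf q^\Nfrak(x,X|\lambda)<\mathbf q^\Nfrak(x,B|\lambda)$ for all $x$. Because $B$ was an arbitrary finite nonempty set, the conclusion holds in every such set.
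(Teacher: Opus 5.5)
Your main line is the paper's own route: the corollary is Proposition~\ref{pro:nonstrong}(2) read with $A=X$ and $B$ finite. That is, you take $\mathbf q^\Mfrak(X|\lambda)<\mathbf q^\Mfrak(B|\lambda)=\mathbf 1$, transfer it by Theorem~\ref{th:modifiedBRW-CT}, and add $\mathbf q^\Nfrak(B|\lambda)<\mathbf 1$. For $\lambda<\lambda_s^\Mfrak$ this is correct and complete, with the conclusion in the vector sense $\mathbf q^\Nfrak(X|\lambda)<\mathbf q^\Nfrak(B|\lambda)<\mathbf 1$.

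\emph{The endpoint.} The fact you invoke at $\lambda=\lambda_s^\Mfrak$ (local extinction at $\lambda_s$, quoted before Table~\ref{tb:table3}) is not available for general GMBRWs. Definition~\ref{def:onepardependentfamily} imposes no continuity in $\lambda$, and the corollary itself fails at the endpoint. Take $X=\mathbb T_d$ ($d\ge3$) with adjacency matrix $K$, and let $K^*$ equal $K$ except $k^*_{oo}=k>2\sqrt{d-1}$. Let $\boldmu_\lambda,\boldnu_\lambda$ be the discrete-time counterparts of $(X,K)$, $(X,K^*)$ at parameter $\phi(\lambda)$, where $\phi(\lambda)=\lambda$ for $\lambda<\lambda_s^K=1/(2\sqrt{d-1})$ and $\phi(\lambda)=2\lambda$ otherwise. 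Since $\phi$ is nondecreasing, both families are pgf-monotone and irreducible, and $\Mfrak\sim\Nfrak$. Moreover $\lambda_w^\Mfrak=1/d$, $\lambda_s^\Mfrak=\lambda_s^K$ and $\lambda_s^\Nfrak\le 1/k<\lambda_s^K$. At $\lambda=\lambda_s^\Mfrak$ the $\Mfrak$-process is the transitive BRW at parameter $2\lambda_s^K$. It survives locally, and strongly by the quasi-transitivity fact used for Corollary~\ref{cor:pureglobal-nonostronglocal}(2). Hence $\mathbf q^\Mfrak(X|\lambda)=\mathbf q^\Mfrak(B|\lambda)$, and Theorem~\ref{th:modifiedBRW-CT} gives $\mathbf q^\Nfrak(X|\lambda)=\mathbf q^\Nfrak(B|\lambda)$, so there is no nonstrong survival at all.

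So your fallback is forced, not optional. In this generality only the open interval is provable; that is all Proposition~\ref{pro:nonstrong}(2) gives, and the paper has no separate endpoint argument. The closed endpoint needs an extra hypothesis such as $\mathbf q^\Mfrak(x,x|\lambda_s^\Mfrak)=1$. This holds, for example, when $\lambda\mapsto M_\lambda$ is left-continuous, as for continuous-time and ageing BRWs where $M_\lambda=\lambda K$. Germ order makes each $m_{xy}(\lambda)$ nondecreasing, so $\Phi_\lambda(x,x|1)$ is nondecreasing and left-continuous. Then $\Phi_\lambda(x,x|1)\le 1$ for $\lambda<\lambda_s$ passes to $\lambda=\lambda_s$.

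\emph{Every starting point.} Your last paragraph is wrong. When a descendant reaches $x_0$, other particles are alive, and the event you use says nothing about their progeny in $B$. To kill them off you need $\mathbf q^\Nfrak(y,B|\lambda)>0$ at their positions, and this can fail. Take any $\Mfrak$ with $\lambda_w^\Mfrak<\lambda_s^\Mfrak$ (e.g.\ the continuous-time BRW on $\mathbb T_d$). Modify it only at a site $o$ by giving every particle there one extra child at $o$, i.e.\ $G_{\boldnu_\lambda}(\mathbf z|o)=\mathbf z(o)\,G_{\boldmu_\lambda}(\mathbf z|o)$. Then $\Nfrak$ is an irreducible GMBRW with $\Nfrak\sim\Mfrak$ and $\lambda_s^\Nfrak=0$. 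But from $o$ local survival is almost sure, so $\mathbf q^\Nfrak(o,X|\lambda)=\mathbf q^\Nfrak(o,B|\lambda)=0$, and survival from $o$ is strong for every $\lambda$. The paper itself notes in Section~\ref{subsec:q1} that strong survival may depend on the starting vertex. The upgrade does go through under an extra assumption, such as every site having positive probability of producing no offspring, which lets you kill the siblings. Otherwise stop at the vector inequality delivered by Theorem~\ref{th:modifiedBRW-CT}.
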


These results imply the following.

\begin{Proposition}
\label{pro:maximality2}
Let $(X,\Mfrak)$ be
%and $(X,\Nfrak)$ two 
an irreducible GMBRW.
% such that $k_{xy}=\Nfrak_{xy}$ for all $x \in X \setminus A$
%where $A$ is a finite set.
\begin{enumerate}
\item If $\lambda^{\Mfrak}_w<\lambda^{\Mfrak}_s$ then, for all $\Nfrak \in [\Mfrak]$, either $\lambda^{\Nfrak}_w=\lambda^{\Mfrak}_w$ and $\lambda^{\Nfrak}_s \ge \lambda^{\Mfrak}_s$ or $(X,\Nfrak)$ exhibits a nonstrong local survival phase.		
\item  If 
$\lambda^{\Mfrak}_w<\lambda^{\Mfrak}_s$ and
%then $\lambda^{K}_w \ge \lambda^{K^*}_w$. Moreover, if 
$(X,\Mfrak)$ has no nonstrong local survival phase, then for all $\Nfrak\in [\Mfrak]$ we have $\lambda^{\Mfrak}_s \ge \lambda^{\Nfrak}_s$. Moreover, for all $\Nfrak \in [\Mfrak]$ such that $(X,\Nfrak)$ has no nonstrong local survival phase, we have $\lambda_s^{\Nfrak}=\lambda_s^{\Mfrak}$ and $\lambda_w^{\Nfrak}=\lambda_w^{\Mfrak}$.
\end{enumerate}
\end{Proposition}

\begin{proof}
\leavevmode
\begin{enumerate}
\item The first part comes from Corollary~\ref{cor:nonstrong}. Indeed, since $\lambda^{\Mfrak}_w <\lambda^{\Mfrak}_s$, and $(X,\Nfrak)$ does not have nonstrong survival phase, then $\max(\lambda_s^{\Nfrak},  \lambda^{\Mfrak}_w) \ge \lambda^{\Mfrak}_s$, that is $\lambda_s^{\Nfrak} \ge \lambda^{\Mfrak}_s$. Moreover, from Corollary~\ref{cor:maximality}, $\lambda_w^{\Nfrak} \le \lambda^{\Mfrak}_w$, whence $\lambda_w^{\Nfrak} < \lambda_s^{\Nfrak}$; thus, again by Corollary~\ref{cor:maximality}, $\lambda_w^{\Nfrak} = \lambda^{\Mfrak}_w$.
\item %The first part is simply Proposition~\ref{pro:maximality}. 
We start by proving that $\lambda_s^\Mfrak \ge \lambda_s^{\Nfrak}$.
Since $\lambda^\Mfrak_w < \lambda^\Mfrak_s$ then, according to Corollary~\ref{cor:maximality}, either $\lambda^\Mfrak_w = \lambda^{\Nfrak}_w$ or $\lambda_s^{\Nfrak}=\lambda_w^{\Nfrak}<\lambda_w^\Mfrak$. In the second case 
the assertion is proved. In the first case, we can apply (1) by switching the role between $\Mfrak$ and $\Nfrak$: $\lambda_w^{\Nfrak}<\lambda_w^\Mfrak$, $(X,\Mfrak)$ does not exhibit a nonstrong survival phase therefore $\lambda^\Mfrak_s \ge \lambda^{\Nfrak}_s$.

We now prove the second assertion of (2). We already proved that $\lambda^\Mfrak_s \ge \lambda^{\Nfrak}_s$. Now, since $\lambda^\Mfrak_w < \lambda^\Mfrak_s$ and $(X,\Nfrak)$ does not have a nonstrong local survival phase, then from (1) we have $\lambda^{\Nfrak}_w=\lambda^{\Mfrak}_w$, $\lambda^{\Nfrak}_s \ge \lambda^{\Mfrak}_s$. Therefore, $\lambda^{\Nfrak}_w=\lambda^{\Mfrak}_w$ and $\lambda^{\Nfrak}_s = \lambda^{\Mfrak}_s$.

\end{enumerate}
\end{proof}

\begin{Remark}
\label{rem:continuoustime}
The results of Section~\ref{sec:survivalprob}
apply naturally to a subclass of GWBRWs, namely the discrete-time counterparts of continuous-time BRWs and of
ageing BRWs. 
In this sense, our results significantly extend those obtained in \cite{cf:BZ2025}.
\end{Remark}

As an application of our results to the subclass of continuous-time BRWs (as explained in the previous remark), we can prove the following.
We recall that $(X,K)$ is \textit{quasitransitive} if a finite $X_0\subset X$ exists such that for every $x\in X$, there is
a bijective map $\gamma_x:X\to X$ satisfying $\gamma_x^{-1}(x)
\in X_0$ and $k_{yz}=k_{\gamma_x y\,\gamma_x z}$ for all
$y,z$. For example, if the rates are translation-invariant, then $(X,K)$ is quasi-transitive (actually, it is transitive).

\begin{Corollary}\label{cor:pureglobal-nonostronglocal}
\leavevmode
\begin{enumerate}
\item 
An irreducible continuous-time BRW with pure global survival phase and an irreducible continuous-time BRW with no pure global survival phase and no nonstrong local survival phase cannot be in the same equivalence class with respect to $\sim$.
\item An irreducible continuous-time BRW with pure global survival phase and an irreducible quasitransitive continuous-time BRW with no pure global survival phase cannot be in the same equivalence class with respect to $\sim$.
\end{enumerate}
\end{Corollary}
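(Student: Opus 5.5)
Both parts will be reduced to Corollary~\ref{cor:maximality} and Proposition~\ref{pro:maximality2}, applied to the discrete-time counterparts. By Section~\ref{subsec:continuous} and Remark~\ref{rem:continuoustime}, each irreducible continuous-time BRW $(X,K)$ gives an irreducible (hence regular) GMBRW, and it has the same extinction probabilities, so the same $\lambda_w,\lambda_s$. Two continuous-time BRWs are in the same $\sim$-class exactly when their rate matrices differ on finitely many rows. In that case the associated families of measures \eqref{eq:mucontinuous} differ only at finitely many sites, so $\Mfrak\sim\Nfrak$. A pure global survival phase means there is $\lambda$ with $\mathbf{q}(x,X|\lambda)<\mathbf{q}(x,x|\lambda)=1$; by Table~\ref{tb:table3} this is equivalent to $\lambda_w<\lambda_s$.

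For (1), argue by contradiction. Let $\Mfrak$ have $\lambda_w^\Mfrak<\lambda_s^\Mfrak$, and let $\Nfrak\sim\Mfrak$ have $\lambda^\Nfrak_w=\lambda^\Nfrak_s$ and no nonstrong local survival phase. Apply Proposition~\ref{pro:maximality2}(1): since $(X,\Nfrak)$ has no nonstrong local survival phase, we get $\lambda^\Nfrak_w=\lambda^\Mfrak_w$ and $\lambda^\Nfrak_s\ge\lambda^\Mfrak_s$. Then $\lambda_w^\Nfrak=\lambda_w^\Mfrak<\lambda_s^\Mfrak\le\lambda_s^\Nfrak$, so $(X,\Nfrak)$ has a pure global survival phase, which contradicts $\lambda^\Nfrak_w=\lambda^\Nfrak_s$.

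For (2), the natural route is to show that an irreducible quasitransitive continuous-time BRW has no nonstrong local survival phase, and then invoke (1). The key known fact is that for quasitransitive irreducible BRWs, local survival implies strong local survival, i.e. $\mathbf{q}(x,x|\lambda)<1$ forces $\mathbf{q}(x,x|\lambda)=\mathbf{q}(x,X|\lambda)$. This is the standard result for quasitransitive BRWs, as in \cite{cf:BZ14-SLS} (and \cite{cf:BBHZ}), and I would cite it rather than reprove it. Once this is in hand, a quasitransitive BRW without a pure global survival phase satisfies the hypotheses of the second BRW in (1), and the conclusion follows.

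The main obstacle is justifying this strong-local-survival property in the present continuous-time setting, including the precise hypotheses under which the cited result applies. If I had to sketch it, I would proceed as follows. Quasitransitivity gives finitely many orbit types, so the quantities $\mathbf{q}(x,X)$ and $\mathbf{q}(x,x)$ take finitely many values. On the global survival event the population grows without bound, so infinitely many particles start from sites within bounded distance of $X_0$-images. Each of these, independently, has probability bounded below of surviving locally. By a Borel–Cantelli-type argument, global survival then implies local survival almost surely, i.e. $\mathbf{q}(x,x)=\mathbf{q}(x,X)$ whenever $\mathbf{q}(x,x)<1$.
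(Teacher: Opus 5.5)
Your proposal is correct and follows the paper's proof: part (1) is the same contradiction via Proposition~\ref{pro:maximality2}(1), and part (2) likewise reduces to (1) by citing \cite[Corollary 3.2]{cf:BZ14-SLS} to rule out a nonstrong local survival phase for irreducible quasitransitive BRWs. One small point to tighten is your claim that Table~\ref{tb:table3} makes ``pure global survival phase'' equivalent to $\lambda_w<\lambda_s$: pure global survival at a single $\lambda$ only gives $\lambda_w\le\lambda\le\lambda_s$ from that table, so you should instead take $\lambda_w<\lambda_s$ as the meaning of the phrase, as the paper implicitly does.
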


\begin{proof}
\leavevmode
\begin{enumerate}
\item We prove the result by contradiction.
Let $(X,K)$ be such that $\lambda_w^K<\lambda_s^K$ and $(X,K^*)$ be such that $\lambda_w^{K^*}=\lambda_s^{K^*}$ and it does not have a nonstrong local survival phase. If $K{\sim}K^*$
 then, according to Proposition~\ref{pro:maximality2}(1) $\lambda_w^{K^*}=\lambda_w^K<\lambda_s^{K}\le\lambda_s^{K^*}$ and this contradicts $\lambda_w^{K^*}=\lambda_s^{K^*}$. Therefore, the two processes are not in the same equivalence class.

\item It follows from (1) if we prove that a quasitransitive BRW has no nonstrong local survival phase. This is a consequence of \cite[Corollary 3.2]{cf:BZ14-SLS}; indeed, for all $x \in X$ either $\mathbf{q}(x,x)=1$ or $\mathbf{q}(x,x)=\mathbf{q}(x,X)$. Since the BRW is irreducible, then either
$\mathbf{q}(x,x)=1$ for all $x \in X$ or $\mathbf{q}(x,x)=\mathbf{q}(x,X)$ for all $x \in X$. This yields the claim.    
\end{enumerate}
\end{proof}

\section{An example: the tree $\mathbb{T}_d \oplus \mathbb{T}_q$}
\label{sec:examples}

In this section, we provide an example showing that $\lambda_w <\lambda(A)<\lambda_s$ is possible for some $A\subseteq X$ and for irreducible GMBRWs on $X$ and we study how these critical values are affected by a specific local modification. The process, in this case, is the discrete-time counterpart of a continuous-time BRW. 

\begin{Example}
\label{ex:T5T6}
Let  $d,q \in \mathbb{N}$ be such that $d \ge 6$ and $ 2\sqrt{d-1}<q<d$.
%; clearly $q \ge 5$. 
Denote by
$X$ the vertex set of the graph $\mathbb{T}_d \oplus \mathbb{T}_q$ 
obtained by identifying the roots of $\mathbb{T}_d$ and $\mathbb{T}_q$, while the set of edges is the union of the standard sets of edges of the trees $\mathbb{T}_q$ and $\mathbb{T}_d$.  Let $K$ be the adjacency matrix of the graph: to each edge we associate a rate equal to 1. 
Then, $\lambda_w=1/d<1/q=\lambda(\mathbb{T}_q)<1/(2\sqrt{d-1})=\lambda_s$.
\end{Example}

\begin{proof}
The graph is obtained gluing the trees by the roots; we denote the new root by $o$.
The root has $d+q$ neighbors, while the number of neighbors of $x$ is $d$ (resp.~$q$) if $x$ is not the root and belongs to the subset $\mathbb{T}_d$ (resp.~$\mathbb{T}_q$).

It is very easy to prove that, since the removal of $o$ disconnects $\mathbb{T}_d$ and $\mathbb{T}_q$ then $\Phi^X_{\lambda}(o,o|1)=\Phi^{\mathbb{T}_d}_{ \lambda}(o,o|1)+\Phi^{\mathbb{T}_q}_{\lambda}(o,o|1)$ where $\Phi^{\mathbb{T}_n}_{\lambda}(o,o|1)=n \big (1-\sqrt{1-4(n-1)\lambda^2} \big )/(2(n-1))$ for all $n \in \mathbb{N}$, $n \ge 2$.

We now prove that $\lambda_w=1/d$, $\lambda_s=1/(2\sqrt{d-1})$, $\lambda(\mathbb{T}_q)=1/q$ and $\lambda(\mathbb{T}_d)=1/d$.
\begin{enumerate}  
\item $\lambda_s=1/(2\sqrt{d-1})$. Indeed, by equation~\eqref{eq:lambdas1}
\[
\lambda_s(x)=
\sup 
\Big \{ \lambda %\geq 0
\in {\mathbb R}: q \frac{{1-\sqrt{1-4(q-1)\lambda^2}}}{2(q-1)}+d \frac{1-\sqrt{1-4(d-1)\lambda^2}}{2(d-1)} < 1 \Big \}
\]
where $d \ge 6$. Clearly $\lambda_s \le 1/(2\sqrt{d-1})$ which is the radius of convergence of the power series $\Phi^{X}_{\lambda}(x,x|1)$ as a function of $\lambda$. On the other hand, 
\[
\Phi^X_{1/(2\sqrt{d-1})} \big (o,o|1 \big)=q \frac{{1-\sqrt{1-(q-1)/(d-1)}}}{2(q-1)}+\frac{d}{2(d-1)}<1
\]
where the last inequality holds since $d>q \ge 5$ (note that, for every fixed $q \in \mathbb{N}, \, q \ge 2$, the function $d \mapsto q \frac{{1-\sqrt{1-(q-1)/(d-1)}}}{2(q-1)}+\frac{d}{2(d-1)}$ is strictly decreasing in $[q, +\infty)$). More precisely, the R.H.S.~of the previous equation is bounded from above (for all $q,d$ such that $q<d$) by the same expression where $d=q+1$, that is  $\Phi^X_{ 1/(2\sqrt{d-1})} \big (o,o|1 \big) \le q \frac{{1-\sqrt{1/q}}}{2(q-1)}+\frac{q+1}{2q}=1-\frac{1}{2(\sqrt{q}+1)}+\frac{1}{q}$. It is easy to show that $1-\frac{1}{2(\sqrt{q}+1)}+\frac{1}{q}<1$ for all $q \ge 5$.

\item $\lambda_w=1/d$. 
Take  $\lambda<1/d$: since $\lambda < \lambda_s=1/(2\sqrt{d-1})$, there is local extinction. Thus, if there were global
survival, it would be pure global survival, that is, $\mathbf{q}(o,X|\lambda)<\mathbf{q}(o,o|\lambda)=\mathbf 1$, and $o\stackrel{\boldmu_\lambda}{\nRightarrow} X$.
By Theorem \ref{cor:equivalence}, this would imply that there is a positive probability of surviving starting from some $x \in X$, $x\neq o$, without visiting $o$ and that the process restricted to  $\mathbb{T}_d$ (if $x \in \mathbb{T}_d$) or to $\mathbb{T}_q$ ($x \in \mathbb{T}_q$) survives, but this is false since $\lambda<\lambda_w(\mathbb{T}_d)=1/d<1/q=
\lambda_w(\mathbb{T}_q)$. Thus, $\lambda_w \ge 1/d$.
Conversely, if $\lambda >1/d$, then there is survival even if the process is restricted to $\mathbb{T}_d$.  Thus $\lambda_w  \le 1/d$; hence $\lambda_w = 1/d$.

\item $\lambda(\mathbb{T}_q)=1/q$. Clearly $\lambda(\mathbb{T}_q) \le 1/q$ since if $\lambda >1/q$ the process restricted to $\mathbb{T}_q$ survives. Take  $\lambda<1/q$. If there were survival, it would be pure global survival, since $\lambda < \lambda_s=1/(2\sqrt{d-1})$. Therefore, 
by Theorem \ref{cor:equivalence} there would be
a positive probability of surviving in $\mathbb{T}_q$, starting from some $x \in X$, without visiting $o$ (clearly $x \in \mathbb{T}_q$). This would imply that the process restricted to $\mathbb{T}_q$ survives, but this is false. 

\item Similarly, we could prove that $\lambda(\mathbb{T}_d)=1/d$.
\end{enumerate}

In the end, we have $\lambda_w=1/d<1/q=\lambda(\mathbb{T}_q)<1/(2\sqrt{d-1})=\lambda_s$.
\end{proof}

It is easy to see that $q=5$ and $d=6$ satisfy the requirements of Exmaple \ref{ex:T5T6}. 
We can add a loop at the root and study how this addition affects the critical parameters: see the following example.

\begin{Example}\label{ex:T5T6loop}
Consider  the graph $\mathbb{T}_d \oplus \mathbb{T}_q$ of the previous example.
Let  $K^*$ coincide with the adjacency matrix $K$ except for the entry $k^*_{oo}:=k>0$. Denote by $\lambda_w^*$, $\lambda^*(\mathbb{T}_q)$, and $\lambda_s^*$ the critical values of the BRW $(X,K^*)$, viewed as functions of $k>0$. 
There exist constants $0<k_1<k_2<k_3$ such that
\begin{itemize}
\item for all $k \in [0,k_1]$ we have $\lambda_w^*=\lambda_w<\lambda^*(\mathbb{T}_{q})=\lambda(\mathbb{T}_{q})<\lambda_s^*=\lambda_s$;
\item for all $k \in (k_1, k_2)$ we have $\lambda_w^*=\lambda_w<\lambda^*(\mathbb{T}_{q})=\lambda(\mathbb{T}_{q})<\lambda_s^*<\lambda_s$;
\item if $k=k_2$ we have $\lambda_w^*=\lambda_w<\lambda^*(\mathbb{T}_{q})=\lambda(\mathbb{T}_{q})=\lambda_s^*<\lambda_s$;
\item for all $k \in (k_2, k_3)$ we have $\lambda_w^*=\lambda_w<\lambda^*(\mathbb{T}_{q})=\lambda_s^*<\lambda(\mathbb{T}_{q})<\lambda_s$;
\item if $k=k_3$ we have $\lambda_w^*=\lambda_w=\lambda^*(\mathbb{T}_{q})=\lambda_s^*<\lambda(\mathbb{T}_{q})<\lambda_s$;
\item for all $k \in (k_3, +\infty)$ we have $\lambda_w^*=\lambda^*(\mathbb{T}_{q})=\lambda_s^*<\lambda_w<\lambda(\mathbb{T}_{q})<\lambda_s$.
\end{itemize}
\end{Example}
\begin{proof}
 The statement follows using Corollaries~\ref{cor:classiccritical}~and~\ref{cor:maximality} and arguments analogous to those in \cite[Example~1]{cf:BZ2025}.
\end{proof}

Let us note that it is possible to compute $k_1, k_2,$ and $k_3$ explicitly, but the required calculations
are lengthy and offer little additional insight.

\begin{Remark}
\label{rem:sweep}
In the above example, we studied a family of GRBRWs parametrized by $k>0$. This leads to the following general observation based on Corollary~\ref{cor:new2}. Let $\{\Mfrak_t\}_{t \in I}$ be a family of GMBRWs indexed by $t \in I \subseteq \mathbb{R}$ such that $[\Mfrak_t]=[\Mfrak_r]$ for all $t,r \in I$. Suppose that the map $t \mapsto \lambda^{\Mfrak_t}_s$ is nonincreasing and continuous. Informally, $\lambda^{\Mfrak_t}_s$ behaves like a sweeping threshold: for all sets $A$ with $\lambda^{\Mfrak_t}(A)<\lambda^{\Mfrak_t}_s$ (for some $t\in I$), the map $t\mapsto \lambda^{\Mfrak_t}(A)$ remains constant up to the smallest value $r\in I$ such that $\lambda^{\Mfrak_r}(A)=\lambda^{\Mfrak_r}_s$. For all $t\ge r$ the two critical values coincide, that is, $\lambda^{\Mfrak_t}(A)=\lambda^{\Mfrak_t}_s$.
\end{Remark}

The next example is a variation of the previous one. The aim is to modify the reproduction laws so that the resulting BRWs are germ ordered but not pgf ordered 
%with respect to the pgf order
 (and therefore not stochastically ordered). 
To this end, we begin with a general construction. Let $(X,P)$ be a random walk represented by the stochastic matrix $P=\big (p(x,y) \big )_{x,y \in X}$ and assume that 
$
\mathrm{deg}(x):=\#\{y \in X \colon p(x,y)>0\}<+\infty \quad \text{for all } x \in X$,
where $\mathrm{deg}(x)$ denotes the \emph{degree} of $x$, that is, the number of sits where a walker can jump from $x$.
%its neighbors. 
For $\lambda>0$ and $n \in \mathbb{N}\setminus\{0\}$, consider the generating function
\[
g_{n,\lambda}(t):=\frac{\lambda+\exp\big(n\alpha_\lambda (t-1)\big)}{1+\lambda},
\]
where the function $\lambda \mapsto \alpha_\lambda$ will be specified later so that the map $\lambda \mapsto \alpha_\lambda/(1+\lambda)$ is strictly increasing. This is the generating function of a random variable defined as follows: with probability $\lambda/(1+\lambda)$ the outcome is $0$, while with probability $1/(1+\lambda)$ it is the outcome of a Poisson random variable with parameter $n \alpha_\lambda$.

Let $(X,\boldmu_\lambda)$ be the BRW defined as follows: given a particle at site $x$, the number of offspring is distributed according to the law with generating function $g_{\mathrm{deg}(x),\lambda}$, and the offspring are independently placed according to the transition probabilities $p(x,\cdot)$. 
It is a GMBRW, but it is not stochastically ordered. 

\begin{Example}\label{ex:nonstochmon}
Let  $X=\mathbb{T}_d \oplus \mathbb{T}_q$, $P$ be the transition matrix of the simple random walk and
\[
g_{n,\lambda}(t)=\frac{\lambda+\exp\big(2n\lambda(t-1)\big)}{1+\lambda}.
\]
The associated family $(X,\boldmu_\lambda)$ is a GMBRW, meaning that if $\lambda\ge\lambda^*$, then
 $\boldmu_\lambda \gegerm \boldmu_{\lambda^*}$, while $\boldmu_\lambda \not\!\!\!\gepgf \boldmu_{\lambda^*}$.
 \end{Example}
\begin{proof}
Observe that, since $g_{n,\lambda}'(1)=2 n \lambda/(1+\lambda)$
% $g_{n,\lambda}'(1)=n \alpha_\lambda/(1+\lambda)$, 
then $\lambda \mapsto g'_{n,\lambda}(1)$ is strictly increasing. Hence, by \cite[Proposition~2.3]{cf:BZgerm}, we have $\boldmu_\lambda \gegerm \boldmu_{\lambda'}$, for all $\lambda > \lambda'>0$.
Indeed, the entries of the first-moment matrix of $(X, \boldmu_\lambda)$ are 
$m_{xy}(\lambda)=2\lambda/(1+\lambda)$,
%$m_{xy}(\lambda)=\mathrm{deg}(x) p(x,y)\alpha_\lambda/(1+\lambda)$. Henceforth, let $(X,P)$ be a simple random %walk on a locally finite graph $(X,E_X)$, namely, $p(x,y)=1/\mathrm{deg}(x)$ if $y$ is a neighbor of $x$ and $0$ %otherwise. In this case  $m_{xy}(\lambda)=\alpha_\lambda/(1+\lambda)$
 if $y$ is a neighbor of $x$ and $0$ otherwise. 
%Let us fix $X=\mathbb{T}_d \oplus \mathbb{T}_q$; s
Straightforward computations yield
\[
\Phi_\lambda(x,x| 1)
= q \frac{1-\sqrt{1-4(q-1)\big(2\lambda/(1+\lambda)\big)^2}}{2(q-1)}
+ d \frac{1-\sqrt{1-4(d-1)\big(2\lambda/(1+\lambda)\big)^2}}{2(d-1)}.
\]
%\[
%\Phi_\lambda(x,x| 1)
%= q \frac{1-\sqrt{1-4(q-1)\big(\alpha_\lambda/(1+\lambda)\big)^2}}{2(q-1)}
%+ d \frac{1-\sqrt{1-4(d-1)\big(\alpha_\lambda/(1+\lambda)\big)^2}}{2(d-1)}.
%\]
%Indeed, the expected number of offspring along each edge is now $\alpha_\lambda/(1+\lambda)$, whereas in the previous example it was equal to $\lambda$. 
%In order to obtain explicit expressions, let us set $\alpha_\lambda:=2\lambda$. With this choice,
%\[
%g_{n,\lambda}(t)=\frac{\lambda+\exp\big(2n\lambda(t-1)\big)}{1+\lambda}
%\]
and the map $\lambda \mapsto 2\lambda/(1+\lambda)$ is strictly increasing. Hence, the family $\Mfrak:=\{(X,\boldmu_\lambda)\}_{\lambda>0}$ is  monotone with respect to the germ order. 
On the other hand, since
$\lambda \mapsto g_{n,\lambda}(0)=(\lambda +\exp(-2n\lambda))/(1+\lambda)$ is eventually strictly increasing as $\lambda \to +\infty$, the family is not monotone with respect to the pgf order.

In this case, the relevant computations can be derived from those of the previous example by substituting $\lambda$
 with $2\lambda/(1+\lambda)$. Thus the new critical values are obtained by applying the function $f(x):=x/(2-x)$ to the corresponding previous ones. Consequently,
$\lambda_w={1}/({2d-1})$, $\lambda_s={1}/({4\sqrt{d-1}-1})$, $\lambda(\mathbb{T}_q)={1}/({2q-1})$ and $\lambda(\mathbb{T}_d)={1}/({2d-1})$.
\end{proof}
Further examples can be constructed; however, they fall outside the scope of the present paper.

\end{document}